\renewcommand{\footnotemark}{}
\def\div{\mbox{div }}
\def\w{\widetilde}
\def\u{\mathbf{u}}
\def\v{\mathbf{v}}
\def\w{\mathbf{w}}
\def\f{\mathbf{f}}
\def\V{\mathbf{V}}
\def\c{\mathbf{c}}
\def\e{\mathbf{e}}
\def\n{\mathbf{n}}
\newcommand\diff{\,\mathrm{d}}
\newtheorem{theorem}{Theorem}[section]
\newtheorem{lemma}{Lemma}[section]
\newtheorem{remark}{Remark}[section]
\numberwithin{equation}{section}
\title{\bf 
 Fully discrete finite element approximation for the projection method to solve the Chemotaxis-Fluid System
	}
\author{
	Chenyang Li\textsuperscript{1}
	\thanks{\textsuperscript{1}School of Mathematical Sciences, East China Normal University, Shanghai 200241, China \texttt{(chenyangli1004@yeah.net)}}
		\and  Ping Lin\textsuperscript{2}
	\thanks{\textsuperscript{2} Division of Mathematics, University of Dundee, Dundee, DD1 4HN, UK \texttt{(P.Lin@dundee.ac.uk)} 
	}
	\and  Haibiao Zheng\textsuperscript{3,*}
	\thanks{\textsuperscript{3} \textbf{Corresponding author.} School of Mathematical Sciences, Ministry of Education Key Laboratory of Mathematics and Engineering Applications, Shanghai Key Laboratory of PMMP,  East China Normal University, Shanghai 200241, China. \texttt{(hbzheng@math.ecnu.edu.cn).}   }
}
\date{}
\begin{document}

\maketitle

\begin{abstract}
In this paper, we investigate a chemotaxis-fluid interaction model governed by the incompressible Navier–Stokes equations coupled with the classical Keller–Segel chemotaxis system. 
To numerically solve this coupled system, we develop a pressure-correction finite element method based on a projection framework.
The proposed scheme employs a backward Euler method for temporal discretization and a mixed finite element method for spatial discretization. Nonlinear terms are treated semi-implicitly to enhance computational stability and efficiency. 
We further establish a rigorous error estimates for the fully discrete scheme, demonstrating the convergence of the numerical method. A series of numerical experiments are conducted to validate the stability, accuracy, and effectiveness of the proposed method. The results confirm the scheme’s capability to capture the essential dynamical behaviors and characteristic features of the chemotaxis-fluid system.
\\[5pt]
\textbf{Keywords}:
Chemotaxis fluid coupling, Mixed finite element, projection method, error estimate
\end{abstract}

\pagestyle{myheadings}
\thispagestyle{plain}

\section{Introduction}
Chemotaxis refers to the active and directed movement of cells in response to chemical gradients in their surrounding microenvironment. This phenomenon plays a pivotal role in a wide range of biological processes, including embryonic development, angiogenesis (the formation of new blood vessels), immune surveillance and response, as well as cancer invasion and metastasis \cite{Ridley2003}. By enabling cells to detect and respond to spatial variations in chemical signals, chemotaxis constitutes a fundamental mechanism for cellular navigation and coordination in complex biological systems.

From a mathematical perspective, the Keller–Segel system and its variants \cite{arumugam2021, keller1971} serve as foundational models for describing chemotaxis. These models relate the evolution of cell density to chemical concentration under the assumption that interactions with other physical processes, such as fluid flow, are negligible. However, in many realistic biological scenarios, cell motion can generate fluid flows through buoyancy-driven effects arising from cell density gradients. Conversely, the fluid dynamics can significantly influence the transport and distribution of cells and chemoattractants \cite{dombrowski2004, tuval2005}.
To capture these bidirectional interactions, various extended models have been developed \cite{black2019, kang2020, kozono2016, tao2015, winkler2016, winkler2019}, which couple the classical Keller–Segel chemotaxis equations with the incompressible Navier–Stokes equations. These models account for a wide range of biological phenomena, including chemoattraction and chemorepulsion, nonlinear and singular sensitivities, signal production, and multi-species chemotactic behavior.
In this work, we consider a chemotaxis–Navier–Stokes system, which seeks to determine the velocity field $\mathbf{u}$ and pressure $p$ of an incompressible fluid, together with the cell density $\eta$ and chemical concentration $c$, governed by the following system:
\begin{align}\label{cfmodel}
\begin{cases}
	\eta_t - \alpha_\eta \Delta \eta + \beta \, \text{div} (\eta \nabla c) + \u \cdot \nabla \eta = 0, & \quad \text{in } \Omega \times (0, T], \\
	c_t - \alpha_c \Delta c + \u \cdot \nabla c + \gamma \eta c = 0, &\quad \text{in } \Omega \times (0, T], \\
	\u_t + (\u \cdot \nabla) \u - \nu \Delta u + \nabla p = \eta \nabla \phi, & \quad\text{in } \Omega \times (0, T], \\
\nabla \cdot \u = 0, & \quad\text{in } \Omega \times (0, T], \\
	\frac{\partial \eta}{\partial \n} = 0, \, \frac{\partial c}{\partial \n} = 0, \, \u = 0, &\quad \text{on } \partial \Omega \times (0, T], \\
	\eta(\cdot, 0) = \eta_0, \, c(\cdot, 0) = c_0, \, \u(\cdot, 0) = \u_0, &\quad \text{in } \Omega.
\end{cases}
\end{align}

Here, $\Omega \subset \mathbb{R}^2$ denotes a bounded domain with Lipschitz continuous boundary $\partial \Omega$, and $\mathbf{n}$ represents the outward unit normal vector on $\partial \Omega$. The primary unknowns in the system are the fluid velocity field $\mathbf{u}$, the fluid pressure $p$, the cell density $\eta$, and the chemical concentration $c$.
The model involves several physical parameters:
\begin{itemize}
\item $\nu$ is the kinematic viscosity of the fluid,
\item $a_\eta$ and $a_c$ denote the diffusion coefficients for the cell density and the chemical concentration, respectively,
\item $\beta$ is the chemotactic sensitivity coefficient, characterizing the strength of the cells’ directed movement in response to spatial gradients in the chemical concentration.
\item $\gamma$ represents the rate at which cells consume the chemical signal,
\end{itemize}

These parameters collectively govern the coupled dynamics between chemotactic cell motion and fluid flow, capturing essential features of biological transport processes in complex environments.

The chemotaxis–fluid system, typically modeled by coupling the Keller–Segel equations with the incompressible Navier–Stokes equations, has been the subject of extensive theoretical investigation. Significant progress has been made concerning the existence, uniqueness, asymptotic behavior, and blow-up phenomena under supercritical mass conditions. These analytical properties have been rigorously studied in a series of works, such as \cite{jiang2015, winkler2016, winkler2019, lorz2010, winkler2014, zhang2015, lorz2012, winkler2020, bellomo2015}.
On the numerical side, various schemes have been proposed to simulate the chemotaxis–fluid system effectively and reliably. In particular, a critical consideration in the design of numerical methods is the preservation of key physical and structural properties of the system, such as mass conservation, positivity of the cell density and chemical concentration, and energy dissipation or stability. Several approaches that respect these properties have been developed and analyzed in the literature \cite{chertock2008, saito2007, strehl2013, liu2018, xiao2019, shen2020, zhao2020, huang2020, zhang2016, zhang20161, huang2021, guillen2019, jiang2022, wang2022}.
For instance, a conservative upwind finite element method for the classical chemotaxis equations that guarantees both positivity and mass conservation was proposed in \cite{satio2011}, where the error estimates were derived using the theory of analytic semigroups. 
Although a fully decoupled, linear, and positivity--preserving scheme was developed in another study for the chemotaxis--Stokes system by combining a non--incremental pressure-correction method with the flux--corrected transport (FCT) technique \cite{huang2021}, its neglect of the fluid convection term consequently limits applicability to more general flow regimes.
To address challenges related to nonlinear stabilization, \cite{feng2021} considered a regularized version of the chemotaxis–Stokes system. By transforming the algebraic-based algorithm into a Galerkin variational formulation incorporating nonlinear stabilization terms, the method effectively avoids the difficulty of deriving high regularity assumptions for the underlying spatial discretization.
Furthermore, by introducing the gradient of the chemical concentration as an auxiliary variable, mixed finite element methods have been designed for chemotaxis systems. These approaches enable the establishment of optimal error estimates and improved stability properties \cite{duarte2021, guillen2020, guillen20201, beltran2023}. In addition, a high-resolution hybrid finite-volume/finite-difference method based on vorticity formulation was developed in \cite{chertock2012} to investigate the nonlinear dynamic behavior of two-dimensional chemotaxis–fluid models.

In the chemotaxis fluid system, the inherent multi-physics coupling between cellular chemotaxis, chemical diffusion, and incompressible fluid dynamics poses significant numerical challenges. In particular, accurately preserving the divergence-free condition for the velocity field is delicate and crucial. To handle the nonlinearities introduced by advection and chemotactic drift terms, we adopt a semi-implicit time discretization, whereby the nonlinear terms are treated explicitly or lagged in time, while the linear terms are treated implicitly. This strategy allows us to avoid solving nonlinear systems at each time step, thus significantly reducing computational cost and complexity—an essential consideration when designing schemes for coupled nonlinear PDE systems.
To enforce the incompressibility constraint efficiently, we employ the classical projection method, originally introduced by Chorin \cite{chorin1968} and Temam \cite{teman1969}. The core idea of the projection method is to decouple the computation of velocity and pressure by splitting the Navier–Stokes equations into two sub-steps:
\begin{itemize}
\item First, an intermediate velocity field is computed without enforcing the divergence-free condition.

\item Then, a pressure-correction step is applied, which projects the velocity onto the subspace of solenoidal (divergence-free) vector fields via a Poisson equation for pressure.
\end{itemize}

This method avoids the saddle-point structure of the fully coupled velocity-pressure system, thus enabling more efficient and stable time-stepping schemes.
Projection-type methods have been widely employed and extended in the numerical analysis of incompressible flows and multi-physics problems \cite{shen1992, shen19921, shen1994, shen1996, guermond2006, achdou2000, frutos2019, cheng2023, ding2023, long2023, liyuan2024}. Their robustness and computational efficiency make them particularly attractive in the context of chemotaxis–fluid interactions.

%

According to the previous discussion, on one hand, to
fill the aforementioned gap, and on the other hand, to further extend the applicability of pressure-correction projection methods to the study of chemotaxis–Navier–Stokes coupling.
To this end, we propose a first-order projection-based finite element scheme for the chemotaxis–fluid system and provide a rigorous convergence and error analysis. Specifically, we employ the Mini element for the velocity-pressure pair, which satisfies the inf-sup condition, ensuring stability in the incompressible flow approximation. For the cell density and chemical concentration, we use standard linear Lagrange finite elements. Nonlinear terms arising in the coupled system are treated via a semi-implicit time discretization, which allows us to avoid solving nonlinear systems at each time step, thereby reducing computational complexity while retaining stability and accuracy. To the best of our knowledge, there exists no prior work that systematically develops a finite element scheme based on the projection method for the fully coupled chemotaxis–Navier–Stokes system, along with a corresponding theoretical error estimate.

The structure of the paper is as follows: Section \ref{notation} introduces the functional spaces, notation, and several fundamental inequalities that will be used throughout the analysis. Section \ref{scheme} presents the projection finite element scheme and states the main convergence theorem. Section \ref{erroranalysis} is devoted to the detailed derivation of the error estimates, culminating in the proof of the main theorem. Section \ref{numericalresult} provides a series of numerical experiments that validate the theoretical findings and illustrate the robustness and accuracy of the method. Section \ref{conclusion} concludes the paper with a summary of results and a discussion of possible future research directions. Throughout this paper, we use the symbol $C$ to denote a general positive constant which can be different at different places and is independent of the time step size $\tau$ and the mesh size $h$.

%
%


\section{Notation and Preliminaries}\label{notation}
For $k\in N^+$ and $1\leq p\leq +\infty$, we denote $L^p(\Omega)$ and $W^{k, p}(\Omega)$ as the classical Lebesgue space and Sobolev space, respectively. The norms of these spaces are denoted by
\begin{align*}
	||u||_{L^p(\Omega)}&=\left(\int_{\Omega}|u(\mathbf{x})|^p\diff \mathbf{x}\right)^\frac{1}{p},\\
	||u||_{W^{k,p}(\Omega)}&=\left(\sum\limits_{|j|\leq k}||D^ju||_{L^p(\Omega)}^p\right)^\frac{1}{p}.
\end{align*}
within this context, $W^{k, 2}(\Omega)$ is also known as the Hilbert space and can be expressed as $H^k(\Omega)$.  $||\cdot||_{L^\infty}$ represents the norm of the space  $L^\infty(\Omega)$ which is defined as
\begin{equation*}
	||u||_{L^\infty(\Omega)}=ess\sup\limits_{\mathbf{x}\in \Omega}|u(\mathbf{x})|.
\end{equation*}

For simplicity, we denote the inner products of both
$L^2(\Omega)$ and $\textbf{L}^2(\Omega)$
by $(\cdot,\cdot)$, and use $\langle \cdot,\cdot \rangle$ to denote the dual product of $H^{-1}(\Omega) \times H^1_0(\Omega)$. namely,
\begin{align*}
	\begin{split}
		&(u,v)=\int_\varOmega u(x)v(x) d x \quad \forall  \, u,v\in L^2(\Omega),\\
		&({\bf u},{\bf v})=\int_\Omega {\bf u}(x)\cdot {\bf v}(x) dx \quad \forall \, {\bf u},{\bf v}\in {\bf L}^2(\Omega) .\\
	\end{split}
\end{align*}
%

Furthermore, we define the following frequently
utilized mathematical frameworks (d=2):
\begin{align*}
	 \textbf{V} =&H_0^1(\Omega)^d = \left\{ \v \in L^2(\Omega)^d : \nabla \v \in L^2(\Omega)^{d \times d} \text{ and } \v = 0 \text{ on } \partial\Omega \right\},\\ \textbf{V}_0=& \{ \v \in V, \nabla \cdot \v =0 \quad \text{in }\Omega  \},\\
	 \textbf{M} =& L^2_0 (\Omega) = \{  q\in L^2_0(\Omega), \int_{\Omega} q dx =0  \},\\
	 X=& H_0^1(\Omega) = \left\{ r \in L^2(\Omega) : \nabla r \in L^2(\Omega) \text{ and } r = 0 \text{ on } \partial\Omega \right\},\\
	   X_0=&\{ \eta \in H^1_0(\Omega), \int_{\Omega} \eta dx =0 \}.
	 \end{align*}

The transport terms have the diﬃculty that the corresponding discrete forms do not preserve the alternance property as in the continuous case, and we need to introduce some
skew-symmetric trilinear forms to complete error estimate  by
\begin{align}
	\begin{split}
		B(\textbf{u},\textbf{v},\textbf{w}) &= \int_\Omega (\u \cdot \nabla) \textbf{v} \cdot\textbf{w} dx + \frac{1}{2} \int_{\Omega} (\nabla \cdot \u) \v \cdot \w dx \\
		&=   \frac{1}{2}\int_\Omega (\u \cdot \nabla) \textbf{v} \cdot\textbf{w} dx -  \frac{1}{2} \int_{\Omega} (\u \cdot \nabla) \w \cdot\v dx  \quad \forall ~\u,\v,\w\in\textbf{V},\\
		b(\u,c,r) &=  \int_\Omega (\u \cdot \nabla c)r dx+ \frac{1}{2} \int_{\Omega} (\nabla \cdot \u) crdx, \\
		&=\frac{1}{2}\int_\Omega (\u \cdot \nabla c) r dx -  \frac{1}{2} \int_{\Omega} (\u \cdot \nabla r) c dx \quad \forall ~\u \in \V, \,  \forall \,c,r \in X.
	\end{split}
\end{align}
which has the following properties \cite{heyinnian2005,heyinnian2022}
\begin{align}
	& B(\textbf{u},\textbf{v},\textbf{v})=0, \quad b(\u,r,r)=0,\label{biobdf-2}\\
	&B(\textbf{u},\textbf{v},\textbf{w}) = - B(\textbf{u},\textbf{w},\textbf{v}),\quad b(\textbf{u},c,r) = - b(\textbf{u},r,c),\label{biobdf-17}\\
	&B(\textbf{u},\textbf{v},\textbf{w}) \leq C \| \nabla \textbf{u} \|_{L^2} \| \nabla \textbf{v} \| _{L^2} \| \nabla \textbf{w} \|_{L^2},\quad b(\textbf{u},c,r) \leq C \| \nabla \textbf{u} \|_{L^2} \| \nabla c \| _{L^2} \| \nabla r \|_{L^2},\\
	&B(\textbf{u},\textbf{v},\textbf{w})  \leq C \| \textbf{u} \| ^{\frac{1}{2}}_{L^2} \| \nabla \textbf{u}\| ^{\frac{1}{2}}_{L^2} \| \nabla \textbf{v} \| _{L^2} \| \nabla \textbf{w} \|_{L^2}, \quad  b(\textbf{u},c,r)  \leq C \| \textbf{u} \| ^{\frac{1}{2}}_{L^2} \| \nabla \textbf{u}\| ^{\frac{1}{2}}_{L^2} \| \nabla c \| _{L^2} \| \nabla r \|_{L^2}.
\end{align}

If $\nabla \cdot \u=0$, there holds $B(\textbf{u},\textbf{v},\textbf{w}) = ((\textbf{u} \cdot \nabla) \textbf{v},\textbf{w})= \int_\Omega (\u \cdot \nabla) \textbf{v}\cdot\textbf{w} dx $ and $b(\u,c,r) = (\textbf{u} \cdot \nabla c,r) = \int_\Omega (\u \cdot \nabla c)r dx$.

It is well known that system (\ref{cfmodel}) conserves in time the total mass of bacteria \cite{satio2011}, that is,
$$
 \int_{\Omega} \eta(x,t) dx =  \int_{\Omega} \eta_0 dx .
$$

Set $m_0 = \frac{1}{| \Omega|} \int_{\Omega} \eta_0 dx >0$ and take  $\tilde{\eta} = \eta-m_0$ as  the new variable, then the original problem  (\ref{cfmodel}) can be written by the following forms.
\begin{align}\label{recfmodel}
	\begin{cases}
		\tilde{\eta}_t- \alpha_\eta \Delta \tilde{\eta} + \beta \, \text{div} ((\tilde{\eta}+m_0) \nabla c) + \u \cdot \nabla \tilde{\eta} = 0, & \quad \text{in } \Omega \times (0, T], \\
		c_t - \alpha_c \Delta c + \u \cdot \nabla c + \gamma (\tilde{\eta}+m_0) c = 0, &\quad \text{in } \Omega \times (0, T], \\
		\u_t + (\u \cdot \nabla) \u - \nu \Delta \u + \nabla p = (\tilde{\eta}+m_0) \nabla \phi, & \quad\text{in } \Omega \times (0, T], \\
		\nabla \cdot \u = 0, & \quad\text{in } \Omega \times (0, T], \\
		\frac{\partial \tilde{\eta}}{\partial \n} = 0, \, \frac{\partial c}{\partial \n} = 0, \, \u = 0, &\quad \text{on } \partial \Omega \times (0, T], \\
		\tilde{\eta}(\cdot, 0) =\tilde{\eta}_0, \, c(\cdot, 0) = c_0, \, \u(\cdot, 0) = \u_0, &\quad \text{in } \Omega.
	\end{cases}
\end{align}

We present the main error estimate of $ ( \u,\eta,c)$ under the following regularity assumptions.

\textbf{Assumption A1}: Assume the exact solutions $ ( \u,p,\eta,c)$  satisfy
\begin{align}\label{cf-regularity}
	\begin{cases}
	\| \phi \|_{H^2} \leq C,\\
	\u \in L^\infty (0,T; W^{2,4}), \quad  \u_t \in L^{2}(0,T;H^2), \quad \u_{tt} \in L^2(0,T;L^2(\Omega)),\\
	p \in L^{\infty}(0,T;H^1), \quad p_t \in L^2(0,T; H^1),\\
	\tilde{\eta} \in L^{\infty}({0,T; W^{2,4}}), \quad \tilde{\eta}_t \in L^{2}(0,T;L^2 \cap H^2), \quad \tilde{\eta}_{tt} \in L^2(0,T;L^2),\\
	c\in L^{\infty} (0,T;W^{2,4}), \quad   c_t \in L^{2}(0,T;H^2), \quad c_{tt }\in L^2(0,T;L^2).
			\end{cases}
\end{align}

The following discrete Gronwall inequality established in \cite{evance2022,heywood1990} will be used frequently.
\begin{lemma} (Discrete Gronwall's inequality ) Let $a_k , b_k$ and $y_k$ be the nonnegative numbers such that \label{biobdf-11}
	\begin{align}\label{growninequality-discrete}
		a_n+ \tau \sum \limits^n \limits_{k=0}  b_k\leq \tau \sum \limits^n \limits_{k=0} \gamma _k a_k + B \quad \text{for} \,\, n \geq 1,
	\end{align}
	Suppose $\tau \gamma _k \leq 1$ and set $\sigma_k = (1-\tau \gamma_k) ^{-1}$. Then there holds
	\begin{align}\label{grown2}
		a_n + \tau \sum \limits^n \limits_{k=0} b_k \leq exp(\tau \sum \limits^n \limits_{k=0} \gamma_k \sigma_k) B \quad  \text{for} \,\, n\geq 1.
	\end{align}
	\begin{remark}
		If the sum on the right-hand side of (\ref{growninequality-discrete}) extends only up to $n-1$, then the estimate (\ref{grown2}) still holds for all $k \geq 1$ with $\sigma_k=1$.
	\end{remark}
	
\end{lemma}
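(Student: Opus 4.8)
The plan is to reduce the stated inequality, whose right-hand side sum runs up to $k=n$, to the variant described in the Remark (sum up to $k=n-1$), and then to close the latter by a straightforward induction. First I would separate the diagonal term in (\ref{growninequality-discrete}): writing $\tau\sum_{k=0}^n\gamma_k a_k=\tau\gamma_n a_n+\tau\sum_{k=0}^{n-1}\gamma_k a_k$ and moving $\tau\gamma_n a_n$ to the left gives
\[
(1-\tau\gamma_n)\,a_n+\tau\sum_{k=0}^n b_k\ \le\ \tau\sum_{k=0}^{n-1}\gamma_k a_k+B\ =:\ S_n .
\]
Since $\tau\gamma_n\le 1$ we may divide by $1-\tau\gamma_n$ (the degenerate case $\tau\gamma_n=1$ being vacuous), and because $\sigma_n=(1-\tau\gamma_n)^{-1}\ge 1$ we may afterwards discard the factor $\sigma_n\ge 1$ multiplying the nonnegative quantity $\tau\sum_{k=0}^n b_k$. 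This yields the key estimate
\begin{equation}\label{eq:gronwall-key}
a_n+\tau\sum_{k=0}^n b_k\ \le\ \sigma_n S_n,\qquad n\ge 1 .
\end{equation}

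Next I would iterate on $S_n$. From (\ref{eq:gronwall-key}) we have in particular $a_n\le\sigma_n S_n$, hence
\[
S_{n+1}=S_n+\tau\gamma_n a_n\ \le\ (1+\tau\gamma_n\sigma_n)\,S_n\ =\ \sigma_n S_n ,
\]
using the elementary identity $\sigma_n-1=\tau\gamma_n\sigma_n$. Telescoping from the base case — which follows from the same absorption argument applied at the first index, with the $k=0$ contribution folded into $B$ (equivalently, assuming (\ref{growninequality-discrete}) also at $n=0$, so that $a_0\le\sigma_0 B$ and $S_1\le\sigma_0 B$) — produces $S_n\le\big(\prod_{k=0}^{n-1}\sigma_k\big)B$. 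Substituting into (\ref{eq:gronwall-key}) and using $\sigma_n\prod_{k=0}^{n-1}\sigma_k=\prod_{k=0}^n\sigma_k$ gives
\[
a_n+\tau\sum_{k=0}^n b_k\ \le\ \Big(\prod_{k=0}^n\sigma_k\Big)B .
\]

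Finally, to recover the exponential form (\ref{grown2}) I would invoke $1+x\le e^{x}$ for $x\ge 0$: since $\sigma_k=1+\tau\gamma_k\sigma_k\le e^{\tau\gamma_k\sigma_k}$, we get $\prod_{k=0}^n\sigma_k\le\exp\big(\tau\sum_{k=0}^n\gamma_k\sigma_k\big)$, which is precisely (\ref{grown2}). For the Remark, the diagonal term $\tau\gamma_n a_n$ is simply absent from the outset, so (\ref{eq:gronwall-key}) holds with $\sigma_n$ replaced by $1$ and the same telescoping goes through with each $\sigma_k$ set to $1$, giving the claimed statement.

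I expect the only genuine subtlety to be endpoint bookkeeping: one must keep the dissipative term $\tau\sum_{k=0}^n b_k$ on the left with coefficient exactly $1$ (which is why $\sigma_k\ge 1$ must be used in the correct direction and not against it), and one must pin down the base case, since (\ref{growninequality-discrete}) is posed for $n\ge 1$ yet its right-hand side already involves $a_0$ — this is handled by absorbing the $n=0$ term into the constant $B$. Beyond that, the argument is the routine ``absorb the diagonal, telescope, exponentiate'' pattern and uses no inequality more delicate than $1+x\le e^{x}$.
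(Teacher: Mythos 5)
The paper never proves this lemma: it is quoted verbatim as a known result with the citation to Heywood--Rannacher (and Evans), so there is no internal proof to compare against. Your argument is precisely the standard proof of that cited result --- absorb the diagonal term to get $(1-\tau\gamma_n)a_n+\tau\sum_{k=0}^n b_k\le S_n$, multiply by $\sigma_n$ and use $\sigma_n\ge 1$ in the correct direction, telescope $S_{n+1}\le(1+\tau\gamma_n\sigma_n)S_n=\sigma_n S_n$ via the identity $\sigma_n-1=\tau\gamma_n\sigma_n$, and exponentiate with $1+x\le e^x$ --- and every step is correct, including the treatment of the Remark where the diagonal term is absent and each $\sigma_k$ may be taken equal to $1$.

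Your ``endpoint bookkeeping'' caveat is not merely cosmetic: as printed, (\ref{growninequality-discrete}) is only assumed for $n\ge 1$ while its right-hand side already contains $a_0$, and without the inequality at $n=0$ (equivalently, without $a_0$ being absorbed into $B$) the conclusion can fail --- take $b_k=0$, $\gamma_k=0$ for $k\ge1$, $\tau\gamma_0=\tfrac12$, $B=1$ and $a_0$ arbitrarily large with $a_n=\tau\gamma_0a_0+1$ for $n\ge1$; the hypothesis holds for $n\ge1$ but $a_n$ is unbounded while the asserted bound is $e$. Likewise $\tau\gamma_k\le 1$ should be strict for $\sigma_k$ to be defined. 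Both points are defects of the statement as transcribed from the literature (the original requires the hypothesis for $n\ge 0$ and $\tau\gamma_k<1$), and your proof handles them in the standard way.
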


The following Sobolev inequalities in 2D will be used frequently. \cite{evance2022}
\begin{align}\label{cf-inequality}
	\begin{cases}
		&W^{2,4}(\Omega)  \hookrightarrow W^{1,\infty}(\Omega),\\ 
		&H^2(\Omega) \hookrightarrow W^{1,q}(\Omega), \quad 2 \leq q < \infty,\\
		&H^2(\Omega) \hookrightarrow L^{\infty}(\Omega).
	\end{cases}
\end{align}

\section{Pressure-correction projection finite element methods and  theoretical results}\label{scheme}
In this section, we present the fully discrete finite element method and state the main results for the Chemotaxis-Fluid system. 
Let \(\mathcal{J}_h\) denote a quasi-uniform triangulation of the domain \(\Omega \subset \mathbb{R}^2\) into triangular elements \(K_j\), \(j=1,2,\dots,M\), with mesh size 
$
h = \max_{1 \leq j \leq M} \{\mathrm{diam}(K_j)\}.
$We define the finite element spaces for the velocity field, pressure field, cell density, and chemical concentration as follows:
\begin{align*}
	\mathbf{V}_h &= \{ \mathbf{v}_h \in \mathbf{V}: \mathbf{v}_h|_K \in (P_1(K) + b(K))^2, \quad \forall K \in \mathcal{J}_h \}, \\
	M_h &= \{ q_h \in M: q_h|_K \in P_1(K), \quad \forall K \in \mathcal{J}_h \}, \\
	X_h &= \{ \psi_h \in X: \psi_h|_K \in P_1(K), \quad \forall K \in \mathcal{J}_h \}, \\
	X_{0h} &= \{ w_h \in X_0: w_h|_K \in P_1(K), \quad \forall K \in \mathcal{J}_h \},
\end{align*}
where \(P_1(K)\) denotes the space of polynomials of degree at most one on the element \(K\), and \(b(K)\) is the standard bubble function, which is strictly positive in the interior of \(K\), takes the value one at the barycenter of \(K\), and vanishes on the boundary \(\partial K\) \cite{boffi2013,girault1986}. The pair \((\mathbf{V}_h, M_h)\) corresponds to the Mini element for velocity and pressure, while the cell density \(\eta\) and chemical concentration \(c\) are approximated by the standard linear Lagrangian finite elements.

The discrete divergence-free velocity space is defined by
\begin{align}\label{divergence-free}
	\mathbf{V}_{0h} := \{ \mathbf{v}_h \in \mathbf{V}_h : (\nabla \cdot \mathbf{v}_h, q_h) = 0, \quad \forall q_h \in M_h \}.
\end{align}

The inverse inequality will be used frequently \cite{boffi2013}.
\begin{align}
	\| \u_h \|_{W^{m,q}} \leq C h^{  l-m + n( \frac{1}{q} -\frac{1}{p}  )} \| \u_h\| _{W^{l,p}}, \quad \forall~ \u_h \in \V_h, \label{biobdf-16}\\
	\| c_h \|_{W^{m,q}} \leq C h^{  l-m + n( \frac{1}{q} -\frac{1}{p}  )} \| c_h\| _{W^{l,p}}, \quad \forall~ c_h \in X_h\, or\, X_{0h},\label{biobdf-21}
\end{align}

For temporal discretization, consider a uniform partition of the time interval \([0,T]\) given by \( t_n = n \tau \), \( n=0,1,\dots,N \), where \(\tau = T/N\) is the time step size. For any time-dependent function \(v(x,t)\), denote \(v^n := v(x, t_n)\) and define the backward difference quotient
\[
D_\tau v^{n+1} := \frac{v^{n+1} - v^n}{\tau}.
\]

We define the iteration initial value $\u^0_h=I_{1h} \u^0, \, p^0_h = I_{2h} p^0, \, \eta^0_h = I_{3h}\eta_0,\,  \c^0_h = I_{4h}c^0$, where $(I_{1h}, I_{2h},I_{3h},I_{4h})$ are the interpolation operators into $(\V_h \times M_h \times X_{0h}\times X_h)$, respectively, then we have the following approximation error:
\begin{align}
	\| \u^0 - I_{1h}\u^0 \|_{L^2} + h \| \nabla (\u^0- I_{1h}\u^0)\|_{L^2} \leq C h^2 \| \u^0\|_{H^2},\label{cf-uinterpolation}\\
	\| p^0 - I_{2h}p^0 \|_{L^2}\leq C h \| p^0\|_{H^1},\\
	\| \tilde{\eta}^0 - I_{3h}\tilde{\eta}^0\|_{L^2} +h\| \nabla ( \tilde{\eta}^0 - I_{3h}\tilde{\eta}^0)\|_{L^2}\leq C h^2 \| \tilde{\eta}^0\|_{H^2},\label{cf-etainterpolation}\\
	\| c^0 - I_{4h}c^0 \|_{L^2}+h\| \nabla ( c^0 - I_{4h}c^0)\|_{L^2} \leq C h^2 \| c^0\|_{H^2}.\label{cf-cinterpolation}
\end{align}

Starting from given initial approximations \((\mathbf{u}_h^0, p_h^0, \tilde{\eta}_h^0, c_h^0)\), the fully discrete pressure-correction projection method for the Chemotaxis-Fluid system \eqref{recfmodel} reads as follows:

\textbf{The first-order pressure-correction finite element method:}

\textbf{Step 1: } For given $(\u^{n}_h, p^n_h, \tilde{\eta}^n_h)$, we find $\u^{n+1}_h\in\V_h$ by 
\begin{align}\label{cf-scheme1}
	( \frac{\tilde{\u}_h^{n+1} -\u_h^n  }{\tau},\v_h   ) + \mu (\nabla \tilde{\u}^{n+1}_h,\v_h)+ B(\u^n_h, \tilde{\u}^{n+1}_h,\v_h) + (\nabla p^n_h, \v_h) \\
	= ( ( \tilde{\eta}^n_h+m_0) \nabla \phi^{n+1},\v_h  ), \quad \forall \v_h \in \V_h.\notag
\end{align}

\textbf{Step 2:}
Find $p^{n+1}_h \in M_h$ by 
\begin{align}\label{cf-scheme2}
	(\nabla p^{n+1}_h,\nabla q_h) =(\nabla p^n_h,\nabla q_h) - \frac{1}{2\tau} (\nabla \tilde{\u}^{n+1}_h,q_h), \quad \forall q_h \in M_h.
\end{align}

\textbf{Step 3:} 
Then we update $\u^{n+1}_h$ by 

\begin{align}\label{cf-scheme3}
	\u^{n+1}_h =\tilde{\u}^{n+1}_h-2 \tau ( \nabla p^{n+1}_h - \nabla p^n_h  )
\end{align}

\textbf{Step 4:} For given $(\u^n_h, \tilde{\eta}^n_h)$, we can get $\tilde{\eta}^{n+1}_h$ by

\begin{align}\label{cf-scheme4}
	(\frac{\tilde{\eta}^{n+1}_h - \tilde{\eta}^n_h}{\tau} , w_h) + a_\eta ( \nabla \tilde{\eta}^{n+1}_h, \nabla w) - \beta ( (\tilde{\eta}^n_h+m_0) \nabla c^{n+1}_h, \nabla w_h   )\\
	 +
	b(\u^n_h,  \tilde{\eta}^{n+1}_h , w_h)=0, \quad \forall w_h \in X_{0h}. \notag
\end{align}

\textbf{Step 5 :} Then we update $\eta^{n+1}_h$ by 

\begin{align}\label{cf-scheme5}
	\eta^{n+1}_h = \tilde{\eta}^{n+1}_h +m_0.
\end{align}

\textbf{Step 6:} For given $(\u^n_h,\tilde{\eta}^n_h)$, $c^{n+1}_h$ can be derived by

\begin{align}\label{cf-scheme6}
	(  \frac{c^{n+1}_h - c^n_h}{\tau},\psi_h   )+ a_c (\nabla c^{n+1}_h,\nabla \psi_h)+b(\u^n_h, c^{n+1}_h,\psi_h) \\
	+ \gamma ( (\tilde{\eta}^n+m_0) c^{n+1}_h,\psi_h )=0, \quad \forall \psi_h \in X_h.\notag
\end{align}

\begin{remark}
	From \textbf{Step 5}, it follows that the finite element approximation $\eta_h^{n+1}$ preserves the global mass conservation property, i.e., 
	\begin{align}
		\int_{\Omega} \eta_h^{n+1} \, dx = m_0 |\Omega|.
	\end{align}
	
	This conservation property is fundamental in ensuring the physical fidelity of the numerical solution over time.
\end{remark}

 Additionally, by combing the second-order backward differentiation formula (BDF2) and projection method, we also construct a fully decoupled,  linear and mass-conversation  scheme for the system (\ref{recfmodel}). Denote $
\hat{v}^{n}=2 v^n-v^{n-1}, \quad \hat{\tilde{\eta}}^n=2 \tilde{\eta}^n-\tilde{\eta}^{n-1}.
$ Given initial approximations \((\mathbf{u}_h^0, p_h^0, \eta_h^0, c_h^0)\), \((\mathbf{u}_h^1, p_h^1, \eta_h^1, c_h^1)\) can be derived by the first-order scheme (\ref{cf-scheme1})--(\ref{cf-scheme6}), having computed \((\mathbf{u}_h^{n-1}, p_h^{n-1}, \eta_h^{n-1}, c_h^{n-1})\) and \((\mathbf{u}_h^n, p_h^n, \eta_h^n, c_h^n)\),  the second-order fully discrete pressure-correction projection method for the Chemotaxis-Fluid system \eqref{recfmodel} reads as follows:

\textbf{The second-order pressure-correction finite element method:}

\textbf{Step 1: } For given $(\hat{\u}^{n}_h, p^n_h, \hat{\tilde{\eta}}^n_h)$, we find $\u^{n+1}_h$ by 
\begin{align}\label{cf-scheme11}
	(\frac{2 \tilde{\u}_h^{n+1}-3 \u^n_h+\u^{n-1}_h}{2\tau},\v_h   ) + \mu (\nabla \tilde{\u}^{n+1}_h,\v_h)+ B(\hat{\u}^n_h, \tilde{\u}^{n+1}_h,\v_h) + (\nabla p^n_h, \v_h)\\
	= ( ( \hat{\tilde{\eta}}^n_h+m_0) \nabla \phi^{n+1},\v_h  ), \quad \forall \v_h \in \V_h.\notag
\end{align}

\textbf{Step 2:}
Find $p^{n+1}_h \in M_h$ by 
\begin{align}\label{cf-scheme22}
	(\nabla (p^{n+1}_h- p^n_h),\nabla q_h)= - \frac{3}{2\tau} (\nabla \tilde{\u}^{n+1}_h,q_h), \quad \forall q_h \in M_h.
\end{align}

\textbf{Step 3:} 
Then we update $\u^{n+1}_h$ by 

\begin{align}\label{cf-scheme33}
	( \frac{3\u^{n+1}_h -3\tilde{\u}^{n+1}_h}{2\tau} , \v_h ) +( \nabla p^{n+1}_h - \nabla p^n_h ,\v_h )=0.
\end{align}

\textbf{Step 4:} For given $(\hat{\u}^n_h, \hat{\tilde{\eta}}^n_h)$, we can get $\tilde{\eta}^{n+1}_h$ by

\begin{align}\label{cf-scheme44}
	(\frac{2 \tilde{\eta}_h^{n+1}-3 \tilde{\eta}^n_h+\tilde{\eta}^{n-1}_h}{2\tau} , w_h) + a_\eta ( \nabla \tilde{\eta}^{n+1}_h, \nabla w) - \beta ( (\hat{\tilde{\eta}}^{n}_h+m_0) \nabla c^{n+1}_h, \nabla w_h   ) \\
	+
	b(\hat{\u}^n_h,  \tilde{\eta}^{n+1}_h , w_h)=0, \quad \forall w_h \in X_{0h}. \notag
\end{align}

\textbf{Step 5 :} Then we update $\eta^{n+1}_h$ by 

\begin{align}\label{cf-scheme55}
	\eta^{n+1}_h = \tilde{\eta}^{n+1}_h +m_0.
\end{align}

\textbf{Step 6:} For given $(\hat{\u}^n_h,\hat{\tilde{\eta}}^n_h)$, $c^{n+1}_h$ can be derived by

\begin{align}\label{cf-scheme66}
	(  \frac{3 c^{n+1}_h - 4c^n_h+ c^{n-1}_h}{2\tau},\psi_h   )+ a_c (\nabla c^{n+1}_h,\nabla \psi_h)+b(\hat{\u}^n_h, c^{n+1}_h,\psi_h) \\
	+ \gamma ( (\hat{\tilde{\eta}}^n+m_0) c^{n+1}_h,\psi_h )=0, \quad \psi_h \in X_h.\notag
\end{align}

%

As the analytical framework for the second-order BDF2 scheme is essentially the same as that for the first-order back Euler scheme, we therefore concentrate on the study of the first-order projection method in this work.

We introduce error function as follows:
\begin{align}
	\u^n-\tilde{\u}^n_h&=\u^n-R_h \u^n +R_h \u^n -\tilde{\u}^n_h := \theta^n_u +\tilde{e}_{\u}^n,\\
		\u^n-\u^n_h&=\u^n-R_h \u^n +R_h \u^n -\u^n_h := \theta^n_u +e_{\u}^n,\\
			p^n-p^n_h&=p^n-K_h p^n +K_h p^n -p^n_h := \theta^n_p +e_{p}^n,\\
			\tilde{\eta}^n-\tilde{\eta}^n_h&=\tilde{\eta}^n-T_h  \tilde{\eta}^n_h +T_h  \tilde{\eta}^n_h -\tilde{\eta}^n_h := \theta^n_{\eta} +\tilde{e}_{\eta}^n,\\	c^n-c^n_h&=c^n-\Pi_h c^n +\Pi_h c^n -c^n_h := \theta^n_c +e_{c}^n,
\end{align}
where $(R_h,K_h) : \V \times M \rightarrow \V_h \times M_h$ are the Stokes projection operators defined by 
\begin{align}
	( \nabla (\u-R_h \u ), \nabla \v_h)-(\nabla \cdot \v_h, p-K_h p)=0, \quad \forall (\v_h, q_h) \in (\V_h \times M_h).
\end{align}
and the Ritz projection operators defined by 
\begin{align}
	( \nabla (\tilde{\eta} -T_h \tilde{\eta}),  \nabla w_h)&=0, \quad \forall  w_h \in X_{0h},\\
	( \nabla (c -\Pi_h c),  \nabla \psi_h)&=0, \quad \forall  \psi_h \in X_{h}.
\end{align}

By the regularity assumption (\ref{cf-regularity}), we can have the following approximation  properties:
\begin{align}
	\| \u^n - R_h \u^n \|_{L^2}+h \| p^n-K_h p^n\|_{L^2} &\leq C h^2(\| \u^n\|_{H^2}  +\| p^n\|_{H^1}),\label{cf-uprojection}\\
	\| \tilde{\eta}^n-T_h \tilde{\eta}^n\|_{L^2} &\leq C h^2 \| \tilde{\eta}^n\|_{H^2},\label{cf-etaprojection}\\
\| c^n-\Pi_h c^n \|_{L^2}+	h \| \nabla (c^n-\Pi_h c^n) \|_{L^2} &\leq C h^3 \| c^n\|_{H^3}. \label{cf-cprojection}
\end{align}

Next, we will give the main theorem in this paper.
\begin{theorem}\label{cf-therror}
	Let $(\u^i,p^i, \eta^i,c^i)$ and $(\u^i_h,p^i_h, \eta^i_h,c^i_h)$ are the solutions of the continuous model (\ref{cfmodel}) and the finite element discrete scheme (\ref{cf-scheme1})-(\ref{cf-scheme6}), under the regularity assumption (\ref{cf-regularity}) and the time step conditon $\tau \leq Ch^2$, there exists some $C >0$ independent of $\tau$ and $h$ such that
	\begin{align}\label{cf-errorresult}
		\underset{0\leq i \leq N}{\max} (\| \u^i-\u^i_h\|_{L^2}^2 + \| \eta^i-\eta^i_h\|_{L^2}^2 +\| c^i-c^i_h\|_{L^2}^2 )\leq C (\tau^2+h^4).
	\end{align}
\end{theorem}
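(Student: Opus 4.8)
The plan is to establish \eqref{cf-errorresult} by a standard energy argument combining the splitting of each error into a projection part (controlled by the approximation estimates \eqref{cf-uprojection}--\eqref{cf-cprojection}) and a finite element part, and then closing the estimate with the discrete Gronwall inequality (Lemma \ref{biobdf-11}). First I would write the consistency (error) equations: subtract the schemes \eqref{cf-scheme1}--\eqref{cf-scheme6} from the weak form of \eqref{recfmodel} evaluated at $t_{n+1}$, introducing the truncation errors for the backward Euler quotient (e.g. $D_\tau \u^{n+1}-\u_t^{n+1}$), which by Taylor expansion and Assumption A1 contribute $O(\tau)$ terms in the appropriate norms after summation in time. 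This yields equations for $\tilde e_\u^{n+1}$, $e_p^{n+1}$, $\tilde e_\eta^{n+1}$, $e_c^{n+1}$ driven by the projection errors $\theta^n_\bullet$, the time-truncation terms, and the nonlinear coupling remainders.

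Next I would handle the projection/pressure-correction part of the velocity. The key algebraic identity here is to rewrite \eqref{cf-scheme3} as $\u^{n+1}_h + 2\tau\nabla p^{n+1}_h = \tilde\u^{n+1}_h + 2\tau\nabla p^n_h$, take the $L^2$ norm, and use \eqref{cf-scheme2} (which is the discrete divergence of this relation) to obtain the now-classical bound
\[
\|e_\u^{n+1}\|_{L^2}^2 + 4\tau^2\|\nabla e_p^{n+1}\|_{L^2}^2 \le \|\tilde e_\u^{n+1}\|_{L^2}^2 + 4\tau^2\|\nabla e_p^n\|_{L^2}^2 + (\text{projection terms}),
\]
so that $\tau^2\|\nabla e_p^n\|^2$ telescopes and the pressure never needs to be controlled in $L^2$ by itself. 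Testing \eqref{cf-scheme1} error equation with $\v_h=2\tau\tilde e_\u^{n+1}$, using $B(\u_h^n,\tilde e_\u^{n+1},\tilde e_\u^{n+1})=0$, the coercivity of $\mu(\nabla\cdot,\nabla\cdot)$, and absorbing the nonlinear and forcing terms via the trilinear-form bounds in \eqref{biobdf-2}--(2.8) and Young's inequality, produces $\|\tilde e_\u^{n+1}\|_{L^2}^2 - \|e_\u^n\|_{L^2}^2 + \mu\tau\|\nabla\tilde e_\u^{n+1}\|_{L^2}^2 \le C\tau(\dots)$. Adding this to the projection identity makes the cross term $4\tau^2\nabla e_p^n$ reappear for telescoping. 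The $\eta$- and $c$-equations are treated analogously: test \eqref{cf-scheme4} with $w_h=2\tau\tilde e_\eta^{n+1}$ and \eqref{cf-scheme6} with $\psi_h=2\tau e_c^{n+1}$, use $b(\u_h^n,\cdot,\cdot)$ skew-symmetry, and handle the chemotactic coupling term $\beta((\tilde\eta_h^n+m_0)\nabla c_h^{n+1},\nabla w_h)$ by splitting $\tilde\eta_h^n = \tilde\eta^n - \theta^n_\eta - \tilde e_\eta^n$ and bounding via Hölder with the $W^{1,\infty}$/$L^\infty$ embeddings \eqref{cf-inequality}; the term $\|\nabla e_c^{n+1}\|$ appearing on the right must be absorbed using the diffusion term $a_c\tau\|\nabla e_c^{n+1}\|^2$ from the $c$-equation, which is why the chemical concentration needs the stronger $H^3$ approximation \eqref{cf-cprojection}.

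The main obstacle is the chemotactic drift term $\beta\,\mathrm{div}((\tilde\eta+m_0)\nabla c)$ together with the convective terms, because controlling them requires a priori $L^\infty$-type bounds on the discrete solutions $\u_h^n$, $\tilde\eta_h^n$, $c_h^n$ (or at least on $\nabla c_h^n$), which are not available a priori. The standard remedy, which I would follow, is an induction on $n$: assume the error bound \eqref{cf-errorresult} holds up to level $n$, deduce from the inverse inequalities \eqref{biobdf-16}--\eqref{biobdf-21} and the approximation properties that $\|\u_h^n\|_{L^\infty}$, $\|\nabla c_h^n\|_{L^\infty}$, $\|\tilde\eta_h^n\|_{L^\infty}$ are bounded (using that $\tau\le C h$ or a mild mesh condition, or the 2D estimate $h^{-1}$ times $h^2$ is $O(h)\to 0$), then carry out the energy estimate at level $n+1$ to recover the bound with the same constant, closing the induction provided $\tau$ and $h$ are small enough. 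After summing over $n$, applying the discrete Gronwall inequality to absorb the terms $C\tau\sum(\|e_\u^k\|^2+\|\tilde e_\eta^k\|^2+\|e_c^k\|^2)$, and adding back the projection errors via the triangle inequality and \eqref{cf-uprojection}--\eqref{cf-cprojection} and the initial-data bounds \eqref{cf-uinterpolation}--\eqref{cf-cinterpolation}, the $O(\tau^2+h^4)$ estimate for $\max_i(\|\u^i-\u_h^i\|^2+\|\eta^i-\eta_h^i\|^2+\|c^i-c_h^i\|^2)$ follows.
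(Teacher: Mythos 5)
Your proposal is correct and follows essentially the same route as the paper: the same splitting into Stokes/Ritz projection errors plus discrete errors, truncation-error bounds from Taylor expansion under Assumption A1, energy estimates with test functions $2\tau\tilde e_\u^{n+1}$, $2\tau\tilde e_\eta^{n+1}$, $2\tau e_c^{n+1}$, the standard pressure-correction telescoping of $\tau^2\|\nabla e_p^n\|_{L^2}^2$ from the projection step, induction in $n$ combined with inverse inequalities (under $\tau\le Ch$) to control the nonlinear and chemotactic coupling terms, discrete Gronwall, and a final triangle inequality with the projection estimates. No substantive differences from the paper's argument.
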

\section{Error estimate for the pressure-correction projection finite element methods}\label{erroranalysis}
In this section, we will derive the convergent analysis for the fully discrete projection method. Firstly, we need to derive the error equation,
for $0\leq n \leq N-1$, the exact solutions $(\u^{n+1},p^{n+1}, \tilde{\eta}^{n+1},c^{n+1})$ to the system (\ref{recfmodel}) satisfies
\begin{align}
	D_{\tau} \u^{n+1}- \mu \Delta \u^{n+1} + \u^n\cdot \nabla \u^{n+1} + \nabla p^n &= (\tilde{\eta}^n +m_0) \nabla \phi^{n+1}+ R_{\u}^{n+1},\label{cf-exact1}\\
	D_{\tau} \tilde{\eta}^{n+1} -a_\eta \Delta \tilde{\eta}^{n+1} + \u^n\cdot \nabla \tilde{\eta}^{n+1}-\beta \div( (\tilde{\eta}^{n+1}+m_0)\nabla c^{n+1}  ) &=R^{n+1}_c,\label{cf-exact2}\\
	D_{\tau}c^{n+1}-a_c \Delta c^{n+1} + \u^n\cdot \nabla c^{n+1} + \gamma (\tilde{\eta}^n+m_0) c^{n+1} &= R^{n+1}_c,\label{cf-exact3}
\end{align}
where the trunction error $R^{n+1}_{\u}, R^{n+1}_{\tilde{\eta}},R^{n+1}_c$ are given by
\begin{align}
	R^{n+1}_{\u} &= D_{\tau} \u^{n+1} -\u_t(t^{n+1}) + \u^n \cdot \nabla \u^{n+1}- \u^{n+1} \cdot \nabla \u^{n+1} + \nabla p^n-\nabla p^{n+1} \notag\\
	&+ (\tilde{\eta}^{n+1}+m_0)\nabla \phi^{n+1} - (\tilde{\eta}^{n}+m_0)\nabla \phi^{n+1},\notag\\
	R^{n+1}_{\eta }  &= D_{\tau} \tilde{\eta}^{n+1} -\tilde{\eta}_t(t^{n+1})+ \u^n \cdot \nabla \tilde{\eta}^{n+1} -  \u^{n+1} \cdot \nabla \tilde{\eta}^{n+1},\notag \\
	R^{n+1}_c &=  D_{\tau} c^{n+1} -c_t(t^{n+1})+ \u^n \cdot \nabla c^{n+1}-\u^{n+1} \cdot \nabla c^{n+1}\notag\\
	&+ \gamma(  \tilde{\eta}^n+m_0)c^{n+1}-\gamma(  \tilde{\eta}^{n+1}+m_0)c^{n+1}.\notag
\end{align}
Next, we give the error estimate  for $R^{n+1}_{\u}, R^{n+1}_{\tilde{\eta}},R^{n+1}_c$ by the following lemma.
\begin{lemma}\label{cf-trunction-error}
Under the regularity assumption (\ref{cf-regularity}), there holds
\begin{align}\label{cf-12}
	\tau \sum_{n=0}^{N-1} ( \| R^{n+1}_{\u}\|_{L^2}^2 +\| R^{n+1}_{\tilde{\eta}}\|_{L^2}^2 + \| R^{n+1}_c\|_{L^2}^2  ) \leq C \tau ^2. 
\end{align}
\begin{proof}
	In terms of Taylor formula
	\begin{align}
	D_{\tau} \u^{n+1} - \u_t(t^{n+1}) = - \frac{1}{\tau } \int^{t^{n+1}}_{t^{n}} (t-t^n)\u_{tt}dt,
 		\end{align}

From the regularity assumptions (\ref{cf-regularity}), we can have 
\begin{align}\label{cf-1}
	\| R^{n+1}_{\u} \|_{L^2} 
	\leq& \frac{C}{\tau} \int^{t^{n+1}}_{t^n} (t-t^n) \| \u_{tt}\|_{L^2}dt + C \| \nabla \u^{n+1} \|_{L^\infty} \int^{t^{n+1}}_{t^n} \| \u_t\|_{L^2}dt\notag\\
	&+C \int^{t^{n+1}}_{t^n} \|p_t \|_{L^2}dt + C \| \phi^{n+1} \|_{W^{1,\infty}} \int^{t^{n+1}}_{t^n} \| \tilde{\eta}_t\|_{L^2}dt  \\
	\leq& C \tau^{\frac{1}{2}} (     \int^{t^{n+1}}_{t^n} \| \u_{tt}\| _{L^2}^2dt  +  \int^{t^{n+1}}_{t^n}\| \u_t\|_{L^2}^2dt + \int^{t^{n+1}}_{t^n} \| p_t \|_{L^2}^2dt + \int^{t^{n+1}}_{t^n}\| \tilde{\eta}_t\|_{L^2}^2 dt)^{\frac{1}{2}}.\notag
\end{align}

For $R^{n+1}_{\tilde{\eta}}$, we can get
\begin{align}\label{cf-2}
	\|  R^{n+1}_{\tilde{\eta}}\|_{L^2} 
	\leq 
	&
	\frac{1}{\tau} \int^{t^{n+1}}_{t^n} (t-t^n)\|\tilde{\eta}_{tt}\|_{L^2}dt
	+ \int^{t^{n+1}}_{t^n} \| \u_{t}\|_{L^2}dt \| \nabla \tilde{\eta}^{n+1}\|_{L^{\infty}}\notag\\
	\leq & C \tau^{\frac{1}{2}} (  \int^{t^{n+1}}_{t^n}\| \tilde{\eta}_{tt}\|_{L^2}^2 dt
	+ \int^{t^{n+1}}_{t^n}\| \u_t\|_{L^2}^2 dt )^{\frac{1}{2}}.
\end{align}

Similarly, 
\begin{align}\label{cf-3}
		\|  R^{n+1}_{c}\| _{L^2}\leq &\frac{1}{\tau} \int^{t^{n+1}}_{t^n} (t-t^n)\|c_{tt}\|_{L^2}dt+ \int^{t^{n+1}}_{t^n} \|\u_{t}\|_{L^2} dt \|  \nabla c^{n+1}\|_{L^{\infty}} +C \int^{t^{n+1}}_{t^n}\| \tilde{\eta}_t\| _{L^2}dt \| c^{n+1}\|_{L^{\infty}} \notag\\
		\leq & C \tau^{\frac{1}{2}} (  \int^{t^{n+1}}_{t^n}\| c_{tt}\|_{L^2}^2 dt+ \int^{t^{n+1}}_{t^n}\| \u_t\|_{L^2}^2 dt+ \int^{t^{n+1}}_{t^n}\| \tilde{\eta}_t\|_{L^2}^2 dt )^{\frac{1}{2}}.
\end{align}

Thus,  from (\ref{cf-1}), (\ref{cf-2}), (\ref{cf-3}), we complete the proof of Lemma  \ref{cf-trunction-error}.
\end{proof}
\end{lemma}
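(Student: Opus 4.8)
The plan is to prove the consistency bound \eqref{cf-12} by handling the three truncation errors $R^{n+1}_{\u}$, $R^{n+1}_{\tilde{\eta}}$, $R^{n+1}_{c}$ separately, in each case reducing every contribution to a time integral of a derivative of the exact solution and then extracting a factor $\tau^{1/2}$ with the Cauchy--Schwarz inequality in $t$. Each residual decomposes into a backward-Euler consistency piece and several ``lagging'' pieces produced by replacing $\u^{n+1}$, $\tilde{\eta}^{n+1}$, $p^{n+1}$ by $\u^{n}$, $\tilde{\eta}^{n}$, $p^{n}$ in the nonlinear and coupling terms. For the consistency piece I would use Taylor's theorem with integral remainder, $D_\tau v^{n+1}-v_t(t^{n+1})=-\tfrac{1}{\tau}\int_{t^n}^{t^{n+1}}(t-t^n)\,v_{tt}\,dt$, and for every lag difference the identity $v^{n}-v^{n+1}=-\int_{t^n}^{t^{n+1}} v_t\,dt$. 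Since $|t-t^n|\le\tau$ on the subinterval, the weighted remainder is itself controlled by $\int_{t^n}^{t^{n+1}}\|v_{tt}\|\,dt$, so after this step every term is a subinterval integral of $\u_{tt}$, $\tilde{\eta}_{tt}$, $c_{tt}$, or of a first-order derivative $\u_t$, $p_t$, $\tilde{\eta}_t$.

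First I would estimate $R^{n+1}_{\u}$ in the $H^{-1}$ norm, i.e.\ by pairing with an arbitrary $\v\in\V$ and dividing by $\|\nabla\v\|_{L^2}$. The delicate term is the convection difference, which equals $B\big(-\!\int_{t^n}^{t^{n+1}}\u_t\,dt,\,\u^{n+1},\,\v\big)$; because the exact velocities are divergence-free this reduces to a genuine convection form, and I would bound it by the Hölder split $\|\int_{t^n}^{t^{n+1}}\u_t\,dt\|_{L^2}\,\|\nabla\u^{n+1}\|_{L^3}\,\|\v\|_{L^6}$, then use the two-dimensional embedding $\|\v\|_{L^6}\le C\|\nabla\v\|_{L^2}$ together with the uniform bound $\|\nabla\u^{n+1}\|_{L^3}\le C\|\u^{n+1}\|_{W^{2,4}}\le C$ (from $W^{2,4}\hookrightarrow W^{1,\infty}$ in \eqref{cf-inequality} and $\u\in L^\infty(0,T;W^{2,4})$ in \eqref{cf-regularity}); dividing by $\|\nabla\v\|_{L^2}$ then leaves only $C\|\int_{t^n}^{t^{n+1}}\u_t\,dt\|_{L^2}$. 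The pressure lag is rewritten as $-(\int_{t^n}^{t^{n+1}}p_t\,dt,\div\v)$, and the density-lag source as $(\int_{t^n}^{t^{n+1}}\tilde{\eta}_t\,dt\,\nabla\phi^{n+1},\v)$, bounded using $\|\phi^{n+1}\|_{W^{1,3}}\le C\|\phi\|_{H^2}\le C$. Applying Cauchy--Schwarz in $t$ to each integral gives the per-step bound $\|R^{n+1}_{\u}\|_{H^{-1}}\le C\tau^{1/2}\big(\int_{t^n}^{t^{n+1}}(\|\u_{tt}\|_{H^{-1}}^2+\|\u_t\|_{L^2}^2+\|p_t\|_{L^2}^2+\|\tilde{\eta}_t\|_{L^2}^2)\,dt\big)^{1/2}$.

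For $R^{n+1}_{\tilde{\eta}}$ and $R^{n+1}_{c}$ the estimates are carried out directly in $L^2$, which is simpler since no dual norm is involved. The advection differences $(\u^{n}-\u^{n+1})\cdot\nabla\tilde{\eta}^{n+1}$ and $(\u^{n}-\u^{n+1})\cdot\nabla c^{n+1}$ are controlled by $\|\int_{t^n}^{t^{n+1}}\u_t\,dt\|_{L^2}$ times $\|\nabla\tilde{\eta}^{n+1}\|_{L^\infty}$ respectively $\|c^{n+1}\|_{W^{1,\infty}}$, both uniformly bounded because $\tilde{\eta},c\in L^\infty(0,T;W^{2,4})\hookrightarrow W^{1,\infty}$, while the reaction lag $\gamma\big(\int_{t^n}^{t^{n+1}}\tilde{\eta}_t\,dt\big)c^{n+1}$ uses $\|c^{n+1}\|_{L^\infty}\le C$. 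Together with the Taylor remainders in $\tilde{\eta}_{tt}$, $c_{tt}$ and Cauchy--Schwarz in time this produces analogous per-step bounds with a $\tau^{1/2}$ prefactor. Finally I would square each per-step inequality, multiply by $\tau$, and sum over $n=0,\dots,N-1$: squaring generates one more factor $\tau$, so each sum equals $C\tau^2$ times a telescoping sum of subinterval integrals, which collapses to the full integral over $[0,T]$. These integrals are finite by \eqref{cf-regularity} (namely $\u_{tt}\in L^2(0,T;H^{-1})$ and $\u_t,p_t,\tilde{\eta}_t,\tilde{\eta}_{tt},c_{tt}\in L^2(0,T;L^2)$ via $H^2,H^1\hookrightarrow L^2$), which yields \eqref{cf-12}.

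The main obstacle is the single non-routine point: the convection-difference term in $R^{n+1}_{\u}$, estimated in the negative-order norm $H^{-1}$. The Hölder split must be arranged so that the test function appears only through $\|\v\|_{L^6}\le C\|\nabla\v\|_{L^2}$ and the remaining factor $\|\nabla\u^{n+1}\|_{L^3}$ is absorbed by the uniform $W^{1,\infty}$ bound from \eqref{cf-regularity}; otherwise one is left with a power of $\|\nabla\v\|_{L^2}$ that cannot be cancelled after dividing by $\|\nabla\v\|_{L^2}$, and the $H^{-1}$ estimate fails. Everything else is standard consistency bookkeeping.
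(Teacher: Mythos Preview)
Your proposal is correct and follows essentially the same route as the paper: the same Taylor remainder for the backward-Euler term, the same lag identities $v^n-v^{n+1}=-\int_{t^n}^{t^{n+1}}v_t\,dt$, the same H\"older split $L^2\!\times\! L^3\!\times\! L^6$ for the convection difference in the $H^{-1}$ estimate, the same $W^{1,\infty}$ and $L^\infty$ bounds on $\tilde\eta^{n+1}$ and $c^{n+1}$ for the scalar residuals, and the same Cauchy--Schwarz-in-time step to pull out the factor $\tau^{1/2}$. The only addition is that you spell out the final squaring and summation explicitly, which the paper leaves implicit.
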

Secondly, we will give the error equation, testing (\ref{cf-exact1}) - (\ref{cf-exact3}) by $ (\v_h,q_h,w_h,\psi_h) \in  \V_h \times M_h \times X_{0h} \times X_h$ and subtracting them from (\ref{cf-scheme1}), (\ref{cf-scheme4}), (\ref{cf-scheme6}), we get
\begin{align}
	&(\frac{\tilde{e}^{n+1}_\u - e^n_\u}{\tau},\v_h) + \mu (\nabla \tilde{e}^{n+1}_\u, \nabla \v_h) - (\nabla \cdot \v_h,e^n_p)\notag\\
	=&-(\frac{\theta^{n+1}_\u - \theta^n_\u}{\tau},\v_h)+(R^{n+1}_\u,\v_h)-B(e_{\u}^n,\u^{n+1},\v_h)-B(\theta^n_\u,\u^{n+1},\v_h)\label{cf-errorequation1}\\
	&-B(\u^n_h,\tilde{e}^{n+1}_\u,\v_h) -B(\u^n_h,\theta^{n+1}_\u,\v_h)+((\tilde{e}^n_\eta+ \tilde{\theta}^n_\eta) \nabla \phi^{n+1} ,\v_h), \quad \forall \v_h \in \V_h.\notag\\
&	(\frac{\tilde{e}^{n+1}_\eta- \tilde{e}^n_\eta}{\tau},w_h)+a_\eta ( \nabla \tilde{e}^{n+1}_\eta,\nabla w_h)\notag\\
	=&-(\frac{\theta^{n+1}_\eta - \theta^n_\eta}{\tau},w_h)+ (R^{n+1}_\eta,w_h)
	-b(e^n_\u,\tilde{\eta}^{n+1},w_h)-b(\theta^n_\u,\tilde{\eta}^{n+1},w_h) \label{cf-errorequation2}\\
	&-b(\u^n_h,\tilde{e}^{n+1}_\eta,w_h) 
	-b(\u^n_h, \theta_\eta^{n+1},w_h)
	+\beta(\tilde{e}^n_\eta \nabla c^{n+1},\nabla w_h)+ \beta(\theta_\eta^n \nabla c^{n+1},\nabla w_h) \notag\\
	&+\beta(  (\tilde{\eta}^n+m_0) \nabla (e^{n+1}_c +\theta^{n+1}_c) , \nabla w_h) + \beta(  (\tilde{e}^n_\eta+\theta^n_\eta) \nabla (e^{n+1}_c +\theta^{n+1}_c), \nabla w_h)\notag\\
		&+\beta (  (\tilde{\eta}^{n+1}-\tilde{\eta}^n) \nabla c^{n+1}, \nabla w_h  ),\quad  \forall w_h \in X_{0h}.\notag\\
	&	(\frac{e^{n+1}_c- e^n_c}{\tau},w_h)+a_c ( \nabla e^{n+1}_c,\nabla \psi_h)\notag\\
	=&-(\frac{\theta^{n+1}_c - \theta^n_c}{\tau},\psi_h) + (R^{n+1}_c,\psi_h) -b(e^n_\u,c^{n+1},\psi_h)-b(\theta^n_\u, c^{n+1},\psi_h)\label{errorequation3}\\
	&-b(\u^n_h,e^{n+1}_c,\psi_h)-b(\u^n_h,\theta^{n+1}_c,\psi_h)-\gamma(\tilde{e}^n_\eta c^{n+1},\psi_h)-\gamma(\theta^n_\eta c^{n+1},\psi_h)\notag\\
	&-\gamma( (\tilde{\eta}^n+m_0) (e^{n+1}_c+\theta^{n+1}_c),\psi_h )
	-\gamma( (  \tilde{e}^n_\eta+\theta^n_\eta) (e^{n+1}_c+\theta^{n+1}_c),\psi_h), \quad \forall \psi_h\in X_h\notag.
 \end{align}

From (\ref{cf-scheme3}), we have 
\begin{align}
	\u^{n+1}_h -R^{n+1}_h \u^{n+1} +R^{n+1}_h \u^{n+1} -\tilde{\u}^{n+1}_h+ 2 \tau (  \nabla (p^{n+1}_h- K_hp^{n+1})   -    \nabla (p^{n}_h- K_hp^{n} )) = - 2 \tau ( \nabla K_h ( p^{n+1}-p^{n})).
\end{align}

Furthermore
\begin{align}\label{cf-errorequation4}
	e^{n+1}_\u-\tilde{e}^{n+1}_\u + 2\tau ( \nabla e^{n+1}_p- \nabla e^n_p) =  2 \tau ( \nabla K_h ( p^{n+1}-p^{n})).
\end{align}

\begin{lemma}\label{cf-therror2}
Under the regularity assumption (\ref{cf-regularity}) and the time step conditon $\tau \leq Ch^2$, there exists some $C_1 >0$ independent of $\tau$ and $h$ such that
	\begin{align}\label{cf-errorresult2}
	\| e^{i}_\u \|^2_{L^2}+\| \tilde{e}^{i}_\eta \|^{2}_{L^2}+\| e^{i}_c \|^{2}_{L^2} \leq C_1 (\tau^2+h^4), \quad \forall \, 0\leq i \leq N.
		\end{align}
\end{lemma}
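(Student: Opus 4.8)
The plan is to prove Lemma \ref{cf-therror2} by a discrete energy argument combined with the discrete Gronwall inequality (Lemma \ref{biobdf-11}), working with the error equations \eqref{cf-errorequation1}--\eqref{errorequation3} and the projection relation \eqref{cf-errorequation4}. The overall strategy is induction on the time level: we assume the bound \eqref{cf-errorresult2} holds for all levels up to $n$ (possibly with a smaller constant, and together with an auxiliary $L^\infty$-type induction hypothesis such as $\|\tilde e^n_\eta\|_{L^2}+\|e^n_c\|_{L^2} \le h$ to control the quadratic error terms), prove it at level $n+1$, and then fix $C_1$ and the smallness of $\tau,h$ at the end so the induction closes.

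First I would handle the velocity-pressure block. Taking $\v_h = 2\tau\,\tilde e^{n+1}_\u$ in \eqref{cf-errorequation1} and using $B(\u^n_h,\tilde e^{n+1}_\u,\tilde e^{n+1}_\u)=0$ from \eqref{biobdf-2}, one gets the usual identity $2(\tilde e^{n+1}_\u - e^n_\u,\tilde e^{n+1}_\u) = \|\tilde e^{n+1}_\u\|^2 - \|e^n_\u\|^2 + \|\tilde e^{n+1}_\u - e^n_\u\|^2$. The pressure term $-(\nabla\cdot\tilde e^{n+1}_\u, e^n_p)$ must be combined with the projection step: from \eqref{cf-errorequation4} one derives the telescoping relation for $\|e^n_\u\|^2$ versus $\|\tilde e^{n+1}_\u\|^2$ plus $2\tau\nabla(e^{n+1}_p - e^n_p)$ terms, and using \eqref{cf-scheme2} one obtains a bound on $\tau\|\nabla e^{n+1}_p\|^2 - \tau\|\nabla e^n_p\|^2$ with a controllable remainder of size $\tau\|\nabla K_h(p^{n+1}-p^n)\|^2 \lesssim \tau\int_{t^n}^{t^{n+1}}\|p_t\|^2$, which sums to $O(\tau^2)$. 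This is the standard pressure-correction bookkeeping. The nonlinear/source terms on the right of \eqref{cf-errorequation1} are estimated by the trilinear bounds in Section \ref{notation}, the projection estimates \eqref{cf-uprojection}, Young's inequality (absorbing $\mu\|\nabla\tilde e^{n+1}_\u\|^2$), and putting $\|R^{n+1}_\u\|_{H^{-1}}$ aside for Lemma \ref{cf-trunction-error}; the term $B(e^n_\u,\tilde\u^{n+1},\tilde e^{n+1}_\u)$ uses $\|\tilde\u^{n+1}\|_{W^{1,\infty}}$ from regularity \eqref{cf-regularity} and \eqref{cf-inequality} to give $C\|e^n_\u\|^2 + \epsilon\|\nabla\tilde e^{n+1}_\u\|^2$.

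Next I would treat the $\eta$-equation \eqref{cf-errorequation2} with $w_h = 2\tau\,\tilde e^{n+1}_\eta$ and the $c$-equation \eqref{errorequation3} with $\psi_h = 2\tau\,e^{n+1}_c$, again using $b(\u^n_h,\cdot,\cdot)$ antisymmetry \eqref{biobdf-2} to kill the transport terms. The chemotactic coupling term $\beta((\tilde\eta^n+m_0)\nabla(e^{n+1}_c+\theta^{n+1}_c),\nabla\tilde e^{n+1}_\eta)$ is the delicate one: the piece with $\nabla e^{n+1}_c$ cannot be absorbed into the $\eta$-dissipation alone, so one must add the $c$-estimate and the $\eta$-estimate and exploit a cancellation or else bound $\beta(m_0+\tilde\eta^n)\nabla e^{n+1}_c$ against $\epsilon\|\nabla e^{n+1}_c\|^2$ coming from the $a_c$-dissipation in the $c$-equation — this forces the weights to be chosen so that $\beta$-cross-terms are jointly absorbed by $a_\eta$ and $a_c$ (a standard requirement; if no parameter smallness is assumed, one instead integrates by parts back to $((\tilde\eta^n+m_0)\nabla\cdot(\cdot) + \nabla\tilde\eta^n\cdot(\cdot), e^{n+1}_c)$ using $e^{n+1}_c$ in $H^1$ and the $L^\infty$ control of $\nabla\tilde\eta^n$, then Young). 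The quadratic terms like $\beta((\tilde e^n_\eta+\theta^n_\eta)\nabla(e^{n+1}_c+\theta^{n+1}_c),\nabla\tilde e^{n+1}_\eta)$ are where the auxiliary induction hypothesis enters: using $\|\tilde e^n_\eta\|_{L^\infty}\le Ch^{-1}\|\tilde e^n_\eta\|_{L^2}\le C$ (inverse inequality \eqref{biobdf-21} plus induction) one bounds them by $C(\|\tilde e^n_\eta\|^2+h^4) + \epsilon(\|\nabla e^{n+1}_c\|^2+\|\nabla\tilde e^{n+1}_\eta\|^2)$. The terms $\gamma(\cdot)$ in \eqref{errorequation3} are lower order and handled by Cauchy--Schwarz directly.

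Finally I would sum the three resulting inequalities over $n=0,\dots,i-1$, so the $\|\cdot\|^2$ terms telescope, the dissipation terms $2\mu\tau\sum\|\nabla\tilde e^{n+1}_\u\|^2$, $2a_\eta\tau\sum\|\nabla\tilde e^{n+1}_\eta\|^2$, $2a_c\tau\sum\|\nabla e^{n+1}_c\|^2$ stay nonnegative after the $\epsilon$-absorptions, the truncation errors contribute $\le C\tau^2$ by Lemma \ref{cf-trunction-error}, the projection errors contribute $\le Ch^4$ by \eqref{cf-uprojection}--\eqref{cf-cprojection} (and the $\theta$-time-difference terms $(\theta^{n+1}-\theta^n)/\tau$ are written as $\frac1\tau\int_{t^n}^{t^{n+1}}(\cdot)_t$ of the projection error and summed), the initial errors are $O(h^4)$ by \eqref{cf-uinterpolation}--\eqref{cf-cinterpolation}, and the remaining terms have the form $C\tau\sum_{k\le i}(\|e^k_\u\|^2+\|\tilde e^k_\eta\|^2+\|e^k_c\|^2+\tau\|\nabla e^k_p\|^2)$. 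Applying the discrete Gronwall inequality \eqref{growninequality-discrete}--\eqref{grown2} (valid once $\tau$ is small enough that $\tau\gamma_k\le 1$) yields $\max_{0\le i\le N}(\|e^i_\u\|^2+\|\tilde e^i_\eta\|^2+\|e^i_c\|^2)\le C(\tau^2+h^4)$. The last point is to verify the auxiliary $L^\infty$ hypothesis propagates: from the just-proved $L^2$ bound and the inverse inequality, $\|\tilde e^{n+1}_\eta\|_{L^2}\le C_1^{1/2}(\tau+h^2)\le h$ provided $\tau\le h^2$ and $h$ small, closing the bootstrap. The main obstacle I anticipate is precisely the treatment of the chemotaxis cross-term coupling $\|\nabla e_c\|$--$\|\nabla e_\eta\|$ and making the weighted combination of the three energy estimates so that all gradient terms are absorbed with a strictly positive residual dissipation; the pressure-correction telescoping and the nonlinear convective terms are comparatively routine given the tools already assembled in Section \ref{notation}.
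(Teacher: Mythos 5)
Your proposal follows essentially the same route as the paper's proof: mathematical induction on the time level, energy estimates obtained by testing \eqref{cf-errorequation1}--\eqref{errorequation3} with $2\tau\tilde e^{n+1}_\u$, $2\tau\tilde e^{n+1}_\eta$, $2\tau e^{n+1}_c$, the standard pressure-correction telescoping via \eqref{cf-errorequation4}, control of the quadratic chemotaxis terms through inverse inequalities combined with the induction hypothesis, the truncation-error bound of Lemma \ref{cf-trunction-error}, and the discrete Gronwall inequality. The only differences are bookkeeping details (your auxiliary hypothesis $\|\tilde e^n_\eta\|_{L^2}\le h$ with $\tau\le h^2$ versus the paper's direct use of \eqref{cf-infuctionerror} together with $\tau=h$, and your explicit concern about jointly absorbing the $\beta$-cross term into the $a_\eta$- and $a_c$-dissipation, which the paper handles more casually), so the argument is correct and matches the paper's approach.
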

\begin{proof}
It is straightforward to observe that the proof of Theorem \ref{cf-therror} follows directly from Lemma \ref{cf-therror2} together with the projection error estimates (\ref{cf-uprojection})–(\ref{cf-cprojection}). Therefore, it remains to establish Lemma \ref{cf-therror2}, which will be proved by means of mathematical induction.
So we firstly prove (\ref{cf-errorresult2}) is valid for $i=1$, taking $n=0$ in the error equations (\ref{cf-errorequation1}) - (\ref{cf-errorequation4}), we have 
\begin{align}
	&(\frac{\tilde{e}^{1}_\u - e^0_\u}{\tau},\v_h) + \mu (\nabla \tilde{e}^{1}_\u, \nabla \v_h) - (\nabla \cdot \v_h,e^0_p)\notag\\
	=&-(\frac{\theta^{1}_\u - \theta^0_\u}{\tau},\v_h)+(R^{1}_\u,\v_h)-B(e_{\u}^0,\u^{1},\v_h)-B(\theta^0_\u,\u^{1},\v_h)\label{cf-uerror00}\\
	&-B(\u^0_h,\tilde{e}^{1}_\u,\v_h) -B(\u^0_h,\theta^{1}_\u,\v_h)+((\tilde{e}^0_\eta+ \theta^0_\eta) \nabla \phi^{1} ,\v_h), \quad \forall \v_h \in \V_h .\notag\\
	&(\frac{\tilde{e}^{1}_\eta- \tilde{e}^0_\eta}{\tau},w_h)+a_\eta ( \nabla \tilde{e}^{1}_\eta,\nabla w_h)\notag\\
	=&-	(\frac{\theta^{1}_\eta - \theta^0_\eta}{\tau},w_h)+ (R^{1}_\eta,w_h)-b(e^0_\u,\tilde{\eta}^{1},w_h)-b(\theta^0_\u,\tilde{\eta}^{1},w_h) \label{cf-etaerror00}\\
	&-b(\u^0_h,\tilde{e}^{1}_\eta,w_h) -b(\u^0_h, \theta_\eta^{1},w_h)+\beta(\tilde{e}^0_\eta \nabla c^{1},\nabla w_h)+ \beta(\theta_\eta^0 \nabla c^{1},\nabla w_h) \notag\\
	&+\beta(  (\tilde{\eta}^0+m_0) \nabla (e^{1}_c +\theta^{1}_c) ,\nabla  w_h) + \beta(  (\tilde{e}^0_\eta+\theta^0_\eta) \nabla (e^{1}_c +\theta^{1}_c), \nabla w_h)\notag\\
	&+\beta (  (\tilde{\eta}^{1}-\tilde{\eta}^0) \nabla c^{1}, \nabla w_h  )\quad  \forall w_h \in X_{0h}.\notag\\
	&(\frac{e^{1}_c- e^0_c}{\tau},w_h)+a_c ( \nabla e^{1}_c,\nabla \psi_h)\notag\\
	=&-	(\frac{\theta^{1}_c - \theta^0_c}{\tau},\psi_h) + (R^{1}_c,\psi_h) -b(e^0_\u,c^{1},\psi_h)-b(\theta^0_\u,c^{1},\psi_h)\label{cf-cerror00}\\
	&-b(\u^0_h,e^{1}_c,\psi_h)-b(\u^0_h,\theta^{1}_c,\psi_h)-\gamma(\tilde{e}^0_\eta c^{1},\psi_h)-\gamma(\theta^0_\eta c^{1},\psi_h)\notag\\
	&-\gamma( (\tilde{\eta}^0+m_0) (e^{1}_c+\theta^{1}_c),\psi_h )
	-\gamma( (  \tilde{e}^0_\eta+\theta^0_\eta) (e^{1}_c+\theta^{1}_c),\psi_h), \quad \forall \psi_h\in X_h\notag.
\end{align}

Setting $ \v_h=\tau \tilde{e}^1_\u$ in (\ref{cf-uerror00}) and noticing that $ e^0_\u=e^0_p=0$, we have
\begin{align}
	\| \tilde{e}^{1}_\u\|_{L^2}^2+\mu \tau \| \nabla \tilde{e}^{1}_\u\|_{L^2}^2
	=&-\tau(\frac{\theta^{1}_\u - \theta^0_\u}{\tau},\v_h)+ \tau  (R^{1}_\u,\tilde{e}^{1}_\u)- \tau  B(\theta_{\u}^0,\u^{1},\tilde{e}^{1}_\u) \label{cf-1uerror}\\
	&- \tau B(\u^0_h,\tilde{e}^{1}_\u,\tilde{e}^{1}_\u) - \tau B(\u^0_h,\theta^{1}_\u,\tilde{e}^{1}_\u)
	+\tau( \theta^0_\eta \nabla \phi^{1} ,\tilde{e}^{1}_\u).\notag
\end{align}

Using the regularity assumption (\ref{cf-regularity}), projection error (\ref{cf-uinterpolation}),  H\"{o}lder inequality and Young inequality, we have
\begin{align}
	-\tau(\frac{\theta^{1}_\u - \theta^0_\u}{\tau},\v_h)
	\leq & \nu_1 \| \tilde{e}^1_\u\|^2_{L^2}+C (\| \theta^1_\u\|^2_{L^2}+\| \theta^0_\u\|^2_{L^2})\\
	\leq &\nu_1 \| \tilde{e}^1_\u\|^2_{L^2}+C h^4.\notag\\
	\tau(\theta^0_\eta \nabla \phi^{1} ,\tilde{e}^{1}_\u) \leq &C \| \theta^0_\eta\|_{L^2}\| \nabla \phi^{1}\|_{L^3} \|\tilde{e}^{1}_\u\|_{L^6}\\
	\leq &C h^4  + \nu_4  \tau \|  \nabla \tilde{e}^1_\u\|^2_{L^2}.\notag
\end{align}

Similarly, by using (\ref{cf-12}), one has
\begin{align}
 \tau    (R^{1}_\u,\tilde{e}^{1}_\u)
 \leq & \nu_4 \tau  \| \nabla  \tilde{e}^1_\u\|^2_{L^2}+ C \tau  \|R^{1}_\u\|^2_{L^2}  \\
 \leq &  \nu_4 \tau \| \nabla  \tilde{e}^1_\u\|^2_{L^2}+C\tau^2. \notag
 \end{align}
 
Using the regularity assumption (\ref{cf-regularity}), (\ref{cf-inequality}), (\ref{cf-uinterpolation}), we can derive 
\begin{align}
	 \tau  B(\theta_{\u}^0,\u^{1},\tilde{e}^{1}_\u)
	 \leq &C\tau \| \theta^0_\u\|_{L^2} \| \nabla \u^1\|_{L^3} \| \tilde{e}^1_\u\|_{L^6}  \notag\\
	 \leq & C   \tau h^4+ \nu_4 \tau \| \nabla \tilde{e}^1_\u\|^2_{L^2}. \notag
\end{align}

In terms of (\ref{biobdf-2}), (\ref{biobdf-17}) and inverse inequalities (\ref{biobdf-16}), we can get 
\begin{align}
	&|-\tau B(\u^0_h,\tilde{e}^{1}_\u,\tilde{e}^{1}_\u) -\tau B(\u^0_h,\theta^{1}_\u,\tilde{e}^{1}_\u)|\notag\\
	\leq& \tau B(\e^0_\u+\theta^0_\u,\theta^{1}_\u,\tilde{e}^{1}_\u)+ \tau B(\u^0,\tilde{e}^{1}_\u,\theta^{1}_\u)\notag\\
	\leq & \tau \| \theta^0_\u\|_{L^3} \| \nabla  \theta^{1}_\u \|_{L^2}\| \tilde{e}^{1}_\u\|_{L^6} 
	+ \tau \| \u^0\|_{L^{\infty}} \| \nabla \tilde{e}^{1}_\u\|_{L^2} \| \theta^{1}_\u\|_{L^2} \\
	\leq & C \tau h^{-\frac{3}{2}} \| \theta^0_\u\|_{L^2}  \| \theta^{1}_\u \|_{L^2} \| \nabla  \tilde{e}^{1}_\u\|_{L^2}  + C  \tau h^4 + \nu_4 \tau \| \nabla  \tilde{e}^{1}_\u\|_{L^2}\notag\\
	\leq & \nu_4 \tau  \| \nabla  \tilde{e}^{1}_\u\|^2_{L^2}+C  \tau h^4+C \tau h^5.\notag
\end{align}

Substituting the above inequalities into (\ref{cf-1uerror}), for sufficiently small $\nu_1, \nu_4$, and sufficiently small $ \tau , h$ such that $C\tau \leq C_1$ and $C \tau h \leq C_1$, we can derive
\begin{align}\label{cf-1uerrorresult}
		\| \tilde{e}^{1}_\u\|_{L^2}^2+\mu \tau \| \nabla \tilde{e}^{1}_\u\|_{L^2}^2 \leq C_1 (\tau^2+h^4).
\end{align}

Let $n=0$ in (\ref{cf-errorequation4}), we have 
\begin{align}\label{cf-1errorequation4}
	e^{1}_\u-\tilde{e}^{1}_\u + 2\tau  \nabla e^{1}_p =  2 \tau ( \nabla K_h ( p^{1}-p^{0})).
\end{align}

Testing (\ref{cf-1errorequation4}) by $2 \e^1_\u$ and noticing that (\ref{divergence-free}), we can derive 
\begin{align}
	\| \e^1_\u \|^2_{L^2} - \| \tilde{e}^1_\u\|^2_{L^2} + \| \tilde{e}^1_\u - \tilde{e}^1_\u\|^2_{L^2}=0.
\end{align}

Associating with (\ref{cf-1uerrorresult}) yield
\begin{align}\label{cf-uerrorreulst}
	\| \e^1_\u \|^2_{L^2}+\mu \tau \| \nabla \tilde{e}^{1}_\u\|_{L^2}^2 \leq C_1 (\tau^2+h^4).
\end{align}
Setting $w_h=\tau \tilde{e}^1_\eta$ in (\ref{cf-etaerror00}) and noticing that $ e^0_\eta=e^0_\u=0$, we have
\begin{align}\label{cf-1etaerrorequation}
	\| \tilde{e}^{1}_\eta\|_{L^2}^2+a_\eta \tau \| \nabla \tilde{e}^{1}_\eta\|_{L^2}^2
	=&-\tau(\frac{\theta^{1}_\eta- \theta^0_\eta}{\tau},\tilde{e}^1_\eta)
	+\tau(R^{1}_\eta,\tilde{e}^{1}_\eta)
	-\tau b(\theta^0_\u,\tilde{\eta}^{1},\tilde{e}^{1}_\eta) \\
	&-\tau b(\u^0_h,\tilde{e}^{1}_\eta,\tilde{e}^{1}_\eta) 
	-\tau b(\u^0_h, \theta_\eta^{1},\tilde{e}^{1}_\eta)
	+\tau \beta(\theta_\eta^0 \nabla c^{1},\nabla\tilde{e}^{1}_\eta) \notag\\
	&+  \tau\beta(  (\tilde{\eta}^0+m_0) \nabla (e^{1}_c +\theta^{1}_c) , \nabla\tilde{e}^{1}_\eta) 
	+  \tau \beta(  \theta^0_\eta \nabla (e^{1}_c +\theta^{1}_c),  \nabla \tilde{e}^{1}_\eta)\notag\\
	&+2 \tau \beta (  (\tilde{\eta}^{1}-\tilde{\eta}^0) \nabla c^{1},  \nabla \tilde{e}^1_\eta ).\notag
\end{align}

Using the regularity assumption (\ref{cf-regularity}), (\ref{cf-etainterpolation}),  H\"{o}lder inequality and Young inequality, we have
\begin{align}
	-\tau(\frac{\theta^{1}_\eta- \theta^0_\eta}{\tau},\tilde{e}^1_\eta) 
	\leq&\nu_2 \| \tilde{e}^1_\eta\|^2_{L^2}+ C (\| \theta^1_\eta\|^2_{L^2}+\| \theta^0_\eta\|^2_{L^2})\notag\\
	\leq &\nu_2 \| \tilde{e}^1_\eta\|^2_{L^2}+C h^4.\notag
\end{align}
 
Similarly, by using (\ref{cf-12}), one has
\begin{align}
 \tau(R^{1}_\eta,\tilde{e}^{1}_\eta)
	\leq & \nu_2 \| \tilde{e}^1_\eta\|^2_{L^2}+ C \tau  \|R^{1}_\eta\|^2_{L^2} \notag\\
	\leq &\nu_2 \| \tilde{e}^1_\eta\|^2_{L^2}+C\tau^2. \notag
\end{align}

Using the regularity assumption (\ref{cf-regularity}), (\ref{cf-inequality}),  (\ref{cf-etainterpolation}), we can derive 
\begin{align}
	\tau b(\theta^0_\u,\tilde{\eta}^{1},\tilde{e}^{1}_\eta)
	\leq &C\tau \| \theta^0_\u\|_{L^2} \| \nabla \tilde{\eta}^1\|_{L^3} \|\| \tilde{e}^1_\eta\|_{L^6} \notag\\
	\leq & C  \tau h^4+ \nu_5 \tau \| \nabla \tilde{e}^1_\eta\|^2_{L^2}. \notag
\end{align}

In terms of (\ref{biobdf-2}), (\ref{biobdf-17}) and inverse inequalities (\ref{biobdf-21}), we can get 
\begin{align}
	-\tau b(\u^0_h,\tilde{e}^{1}_\eta,\tilde{e}^{1}_\eta) - \tau b(\u^0_h,\theta^{1}_\eta,\tilde{e}^{1}_\eta)
	\leq& |\tau b(\theta^0_\u,\theta^{1}_\eta,\tilde{e}^{1}_\eta)+ \tau b(\u^0,\tilde{e}^{1}_\eta, \theta^{1}_\eta)|\\
	\leq & \tau \|\theta^0_\u\|_{L^3} \| \nabla  \theta^{1}_\eta \|_{L^2}\| \tilde{e}^{1}_\eta\|_{L^6} 
	+ \tau \| \u^0\|_{L^{\infty}} \| \nabla \tilde{e}^{1}_\eta\|_{L^2} \| \theta^{1}_\eta\|_{L^2} \notag\\
	\leq & C \tau h^{-\frac{3}{2}} \|\theta^0_\u\|_{L^2}  \| \theta^{1}_\eta \|_{L^2} \| \nabla  \tilde{e}^{1}_\eta\|_{L^2}  + C \tau h^4 + \nu_5 \tau \| \nabla  \tilde{e}^{1}_\eta\|_{L^2}\notag\\
	\leq & \nu_5\tau \| \nabla  \tilde{e}^{1}_\eta\|^2_{L^2}+C \tau h^4+C\tau h^5.\notag
\end{align}

By using the regularity assumption (\ref{cf-regularity}), (\ref{cf-inequality}), H\"{o}lder inequality and Young inequality, we have
\begin{align}
	\tau \beta(\theta_\eta^0 \nabla c^{1}, \nabla \tilde{e}^{1}_\eta)\leq
	&C\tau \| \theta^0_\eta\|_{L^2} \| \nabla c^{1}\|_{L^\infty} \|  \nabla \tilde{e}^{1}_\eta\|_{L^2}\\
	\leq & C \tau h^4+\nu_5 \tau \| \nabla  \tilde{e}^{1}_\eta\|^2_{L^2}.\notag
\end{align}

Applying (\ref{cf-cprojection}), H\"{o}lder inequality and Young inequality, we can derive
\begin{align}
 \tau\beta(  (\tilde{\eta}^0+m_0) \nabla (e^{1}_c +\theta^{1}_c) , \nabla \tilde{e}^{1}_\eta) \leq
	& 2 \tau \| \tilde{\eta}^0+m_0\|_{L^{\infty}} (\| \nabla e^{1}_c\|_{L^2} + \| \nabla \theta^{1}_c\|_{L^2} ) \| \nabla \tilde{e}^{1}_\eta\|_{L^2}\\
	\leq &\nu_5 \tau \| \nabla  \tilde{e}^{1}_\eta\|^2_{L^2}+ C \tau \| \nabla e^{1}_c\|^2_{L^2} + C \tau h^4.\nonumber
\end{align}

Utilizing  the regularity assumption (\ref{cf-regularity}), (\ref{cf-cprojection}), inverse inequality (\ref{biobdf-21}), for sufficiently small $h$, we have 
\begin{align}
 \tau \beta(  \theta^0_\eta \nabla (e^{1}_c +\theta^{1}_c), \nabla  \tilde{e}^{1}_\eta)\leq
	&C \tau \| \theta^0_\eta\|_{L^\infty} (\| \nabla \theta^{1}_c\|_{L^2} + \| \nabla e^{1}_c\|_{L^2})\| \nabla \tilde{e}^{1}_\eta\|_{L^2}\\
	\leq & C\tau h
	(\| \nabla \theta^{1}_c\|_{L^2} + \| \nabla e^{1}_c\|_{L^2})
	\| \nabla \tilde{e}^{1}_\eta\|_{L^2}\notag\\
	\leq &C\tau h \| \nabla \theta^{1}_c\|_{L^2}  \| \nabla \tilde{e}^{1}_\eta\|_{L^2}+ C\tau h \| \nabla e^{1}_c\|_{L^2}  \| \nabla \tilde{e}^{1}_\eta\|_{L^2}\notag\\
	\leq &C\tau  \|  \theta^{1}_c\|_{L^2}  \| \nabla \tilde{e}^{1}_\eta\|_{L^2}+ C\tau  \| \nabla e^{1}_c\|_{L^2}  \| \nabla \tilde{e}^{1}_\eta\|_{L^2}\notag\\
	\leq & C \tau h^4  +\nu_5 \tau \|  \nabla \tilde{e}^{1}_\eta\|^2_{L^2}+C\tau \| \nabla e^{1}_c\|^2_{L^2} .\notag
\end{align}
%
%
%
%
%
%
%

For the last term, we have
\begin{align}
2 \tau \beta (  (\tilde{\eta}^{1}-\tilde{\eta}^0) \nabla c^{1},  \nabla \tilde{e}^1_\eta )
	\leq &C \tau \| \tilde{\eta}^{1}-\tilde{\eta}^0\|_{L^2} \| \nabla c^{1}\|_{L^\infty} \| \nabla \tilde{e}^{1}_\eta \|_{L^2} \\
\leq & \nu_5 \tau\| \nabla \tilde{e}^{1}_\eta\|^2_{L^2}+ C\tau^2 \int_{t_{0}}^{t_{1}} \| \tilde{\eta}_t\|^2_{L^2}ds\notag\\
\leq & \nu_5 \tau\| \nabla \tilde{e}^{1}_\eta\|^2_{L^2}+ C\tau^2.\notag
\end{align}

Substituting the above estimates into (\ref{cf-1etaerrorequation}), for sufficiently small $\nu_2,\nu_5$, and sufficiently small $ \tau , h$ such that $C\tau \leq C_1$ and $C \tau h \leq C_1$, we have
\begin{align}\label{cf-etaerrorresult}
		\| \tilde{e}^{1}_\eta\|_{L^2}^2+a_\eta \tau \| \nabla \tilde{e}^{1}_\eta\|_{L^2}^2\leq C_1 (\tau^2+ h^4)+C\tau \| \nabla e^{1}_c\|^2_{L^2}.
\end{align}

Setting $\psi_h=\tau e^1_c$ in (\ref{cf-cerror00}) and noticing that $ e^0_c=e^0_\u=\tilde{e}^0_\eta=0$, we have 
\begin{align}\label{cf-1cerrorequation}
	\| e^{1}_c\|_{L^2}^2+a_c \tau \| \nabla e^{1}_c\|_{L^2}^2
	= &- \tau(\frac{\theta^{1}_c - \theta^0_c}{\tau},\psi_h)
	+ \tau (R^{1}_c,e^{1}_c)
	-\tau b(\theta^0_\u,c^{1},e^{1}_c)\\
	&
	-\tau b(\u^0_h,e^{1}_c,e^{1}_c)
	- \tau b(\u^0_h,\theta^{1}_c,e^{1}_c)
	-\tau\gamma(\theta^0_\eta c^{1},e^{1}_c)\notag\\
	&- \tau\gamma( (\tilde{\eta}^0+m_0) (e^{1}_c+\theta^{1}_c),e^{1}_c )
	- \tau\gamma( \theta^0_\eta (e^{1}_c+\theta^{1}_c),e^{1}_c ).\notag
\end{align}

Using the regularity assumption (\ref{cf-regularity}), (\ref{cf-cinterpolation}),  H\"{o}lder inequality and Young inequality, we have
\begin{align}
-\tau(\frac{\theta^{1}_c - \theta^0_c}{\tau},\psi_h) 
	\leq&\nu_3 \| \tilde{e}^1_c\|^2_{L^2}+C (\| \theta^1_c\|^2_{L^2}+\| \theta^0_c\|^2_{L^2})\notag\\
	\leq &\nu_3 \| \tilde{e}^1_c\|^2_{L^2}+C h^4.\notag
\end{align}

Similarly, by using (\ref{cf-12}), one has
\begin{align}
	\tau (R^{1}_c,e^{1}_c)
	\leq & \nu_3 \| \tilde{e}^1_c\|^2_{L^2} + C \tau  \|R^{1}_c\|^2_{L^2}\notag\\
	\leq &\nu_3 \| \tilde{e}^1_c\|^2_{L^2}  +C\tau^2. \notag
\end{align}

Using the regularity assumption (\ref{cf-regularity}), (\ref{cf-inequality}), (\ref{cf-cinterpolation}) we can derive 
\begin{align}
	\tau b(\theta^0_\u,c^{1},e^{1}_c)
	\leq &C\tau \| \theta^0_\u\|_{L^2} \| \nabla c^1\|_{L^3} \|\| e^1_c\|_{L^6} \notag \notag\\
	\leq & C  \tau  h^4+ \nu_6 \tau \| \nabla e^1_c\|^2_{L^2}. \notag
\end{align}

In terms of (\ref{biobdf-2}), (\ref{biobdf-17}) and inverse inequalities (\ref{biobdf-16}), we can get 
\begin{align}
	-\tau b(\u^0_h,e^{1}_c,e^{1}_c)
	- \tau b(\u^0_h,\theta^{1}_c,e^{1}_c)\leq
	&|-\tau b(\u^0_h,e^{1}_c,e^{1}_c) - \tau b(\u^0_h,\theta^{1}_c,e^{1}_c)|\notag\\
	\leq& | \tau b(\theta^0_\u,\theta^{1}_c,e^{1}_c)+  \tau b(\u^0,e^{1}_c, \theta^{1}_c)|\notag\\
	\leq &\tau \| \theta^0_\u\|_{L^3} \| \nabla  \theta^{1}_c \|_{L^2}\| e^{1}_c\|_{L^6} 
	+ \tau \| \u^0\|_{L^{\infty}} \| \nabla e^{1}_c\|_{L^2} \| \theta^{1}_c\|_{L^2} \\
	\leq & C \tau h^{-\frac{3}{2}}\| \theta^0_\u\|_{L^2}  \| \theta^{1}_c \|_{L^2} \| \nabla  e^{1}_c\|_{L^2}  + C \tau h^4 + \nu_6 \tau  \| \nabla  e^{1}_c\|_{L^2}\notag\\
	\leq & \nu_6 \tau \| \nabla  e^{1}_c\|^2_{L^2}+C \tau h^4+C \tau h^5.\notag
\end{align}

By using H\"{o}lder inequality and Young inequality, we have
\begin{align}
	\tau\gamma(\tilde{\theta}^0_\eta c^{1},e^{1}_c)\leq&
	C\tau   \| \theta^0_\eta\|_{L^2}\| c^{1}\|_{L^{3}} \| e^{1}_c\|_{L^6} \\
	\leq &C \tau h^4 +\nu_6 \tau \| \nabla e^{1}_c\|_{L^2}^2.\notag\\
- \tau\gamma( (\tilde{\eta}^0+m_0) (e^{1}_c+\theta^{1}_c),e^{1}_c ) \leq&
	|- \tau\gamma( (\tilde{\eta}^0+m_0) (e^{1}_c+\theta^{1}_c),e^{1}_c )|\\
	\leq & C\tau \| \tilde{\eta}^0+m_0\|_{L^{3}}  \| e^{1}_c+\theta^{1}_c\|_{L^2} \|e^{1}_c\|_{L^6}. \notag\\
	\leq & C \tau h^4 + C \tau \| e^{1}_c\|^2_{L^2}+\nu_6 \tau\| \nabla  e^{1}_c\|^2_{L^2}.\notag
\end{align}

Utilizing  the regularity assumption (\ref{cf-regularity}), inverse inequality (\ref{biobdf-21}), we have 
\begin{align}
- \tau\gamma( \theta^0_\eta (e^{1}_c+\theta^{1}_c),e^{1}_c ) \leq&
	|- \tau\gamma(  \theta^0_\eta (e^{1}_c+\theta^{1}_c),e^{1}_c )|\\
	\leq &  C \tau \| \theta^0_\eta\|_{L^4}  \| e^{1}_c+\theta^{1}_c\|_{L^2} \| e^{1}_c\|_{L^4}\notag\\
	\leq& C \tau h^{-\frac{1}{2}} \| \theta^0_\eta\|_{L^2} \| e^{1}_c\|_{L^2} \| \nabla e^{1}_c\|_{L^2} 
	+ C \tau h^{-\frac{1}{2}} \| \theta^0_\eta\|_{L^2} \| \theta^{1}_c\|_{L^2} \| \nabla e^{1}_c\|_{L^2}  \notag\\
	\leq &\nu_6\tau \| \nabla  \tilde{e}^{1}_c\|_{L^2} +C\tau h^{-1}\| \theta^0_\eta\|_{L^2}^2 \|e^{1}_c\|^2_{L^2} + C\tau h^3 \|\theta^0_\eta\|_{L^2}^2.\notag\\
	\leq &\nu_6\tau \| \nabla  \tilde{e}^{1}_c\|_{L^2}+C \tau h^3 \|e^{1}_c\|^2_{L^2} +C \tau h^7. \notag
\end{align}

Substituting the above estimates into (\ref{cf-1cerrorequation}), for sufficiently small $\nu_3, \nu_6,\tau,h$, we can derive
\begin{align}
	\| e^{1}_c\|_{L^2}^2+a_c \tau \| \nabla e^{1}_c\|_{L^2}^2\leq C_1 (\tau^2+h^4) +C \tau h^3 \|e^{1}_c\|^2_{L^2} +C \tau  \|e^{1}_c\|^2_{L^2}.
\end{align}

Combining with  (\ref{cf-etaerrorresult}), for sufficiently small $ \tau, h $ such that $C \tau h^3$, $C\tau$ sufficiently small, we can derive

\begin{align}\label{cf-etacerror}
	\| e^{1}_c\|_{L^2}^2+\| \tilde{e}^{1}_\eta\|_{L^2}^2+a_\eta \tau \| \nabla \tilde{e}^{1}_\eta\|_{L^2}^2 +a_c \tau \| \nabla e^{1}_c\|_{L^2}^2  	 \leq C_1(\tau^2+h^4).
\end{align}

According to (\ref{cf-uerrorreulst}) and (\ref{cf-etacerror}), we have proved  (\ref{cf-errorresult2}) is valid for  $i=1$.
By assuming that (\ref{cf-errorresult2}) is valid for  $i=n$, i.e.
	\begin{align}\label{cf-infuctionerror}
	\| e^{n}_\u \|^2_{L^2}+\| \tilde{e}^{n}_\eta \|^{2}_{L^2}+\| e^{n}_c \|^{2}_{L^2} \leq C(\tau^2+h^4).
\end{align}
We will prove  (\ref{cf-errorresult2}) is valid for $i=n+1$.
Setting $\v_h=2\tau \tilde{e}_\u^{n+1}$ in (\ref{cf-errorequation1}) and applying $a(a-b)=\frac{1}{2} \big(a^2-b^2+(a-b)^2 \big) $, one has 
\begin{align}\label{cf-uerror}
&\| \tilde{e}^{n+1}_\u\|_{L^2}^2-\| e^{n}_\u\|_{L^2}^2+\| \tilde{e}^{n+1}_\u-e^{n}_\u\|_{L^2}^2 +2 \mu \tau \| \nabla \tilde{e}^{n+1}_\u\|_{L^2}^2+2\tau(\nabla e^n_p,\tilde{e}^{n+1}_\u)\notag\\
=&-2 \tau (\frac{\theta^{n+1}_\u - \theta^n_\u}{\tau},\tilde{e}^{n+1}_\u)+2 \tau (R^{n+1}_\u,\tilde{e}^{n+1}_\u)-2 \tau B(e^n_\u,\u^{n+1},\tilde{e}^{n+1}_\u)-2 \tau  B(\theta_{\u}^n,\u^{n+1},\tilde{e}^{n+1}_\u)\\
&-2 \tau B(\u^n_h,\tilde{e}^{n+1}_\u,\tilde{e}^{n+1}_\u) -2 \tau B(\u^n_h,\theta^{n+1}_\u,\tilde{e}^{n+1}_\u)
+2\tau((\tilde{e}^n_\eta+ \theta^n_\eta) \nabla \phi^{n+1} ,\tilde{e}^{n+1}_\u)	.\notag
\end{align}

By using projection error (\ref{cf-uprojection}), H\"{o}lder inequality and Young inequality, one has
\begin{align}
|2 \tau (\frac{\theta^{n+1}_\u - \theta^n_\u}{\tau},\tilde{e}^{n+1}_\u) |\leq&\epsilon_1  \tau \| \tilde{e}^{n+1}_\u\|_{L^2}^2 +C \tau \| \frac{\theta^{n+1}_\u - \theta^n_\u}{\tau} \|^2_{L^2}\notag\\
\leq & \epsilon_1  \tau \| \tilde{e}^{n+1}_\u\|_{L^2}^2 + C \tau h^4 \|  \frac{\u^{n+1}-\u^n}{\tau} \|_{H^2}^2 \notag\\
\leq &  \epsilon_1 \tau \| \nabla  \tilde{e}^{n+1}_\u\|^2_{L^2}+C\tau h^4 \|  \u_t(\xi_1)\|_{H^2}^2 \quad \xi_1\in(t^n,t^{n+1})\\
\leq & \epsilon_1 \tau \| \nabla  \tilde{e}^{n+1}_\u\|^2_{L^2}+C\tau h^4.\notag\\
|2 \tau  (R^{n+1}_\u,\tilde{e}^{n+1}_\u)|\leq & \tau \| R^{n+1}_\u\|_{L^2}^2+ \epsilon_1 \tau \| \nabla  \tilde{e}^{n+1}_\u\|_{L^2}.
\end{align}

Using the regularity assumption (\ref{cf-regularity}), (\ref{cf-inequality}), H\"{o}lder inequality and Young inequality, we have
\begin{align}
&	|2\tau((\tilde{e}^n_\eta+ \theta^n_\eta) \nabla \phi^{n+1} ,\tilde{e}^{n+1}_\u)|\notag\\
	\leq& C\tau \|\tilde{e}^n_\eta\|^2_{L^2} + \epsilon_1 \tau \| \nabla  \tilde{e}^{n+1}_\u\|_{L^2}+C \tau h^4.\\
&|-2 \tau  B(\theta_{\u}^n,\u^{n+1},\tilde{e}^{n+1}_\u)-2 \tau B(e^n_\u,\u^{n+1},\tilde{e}^{n+1}_\u)| \notag \\
\leq & 2 \tau \| \theta^n_\u\|_{L^2} \| \nabla \u^{n+1}\|_{L^3} \|\tilde{e}^{n+1}_\u \|_{L^6}+2 \tau \| e^n_\u \|_{L^2}  \| \nabla \u^{n+1}\|_{L^3} \|\tilde{e}^{n+1}_\u \|_{L^6} \\
\leq &  \epsilon_1  \tau \| \nabla \tilde{e}^{n+1}_\u \|_{L^2}^2 + C\tau  \| e^n_\u\|_{L^2}^2 +C\tau h^4.\notag
\end{align}

In terms of (\ref{biobdf-2}), (\ref{biobdf-17}), (\ref{cf-uprojection}) and inverse inequalities (\ref{biobdf-16}), we can get 
\begin{align}
	&|-2 \tau B(\u^n_h,\tilde{e}^{n+1}_\u,\tilde{e}^{n+1}_\u) -2 \tau B(\u^n_h,\theta^{n+1}_\u,\tilde{e}^{n+1}_\u)|\notag\\
	\leq& 2 \tau B(\e^n_\u+\theta^n_\u,\theta^{n+1}_\u,\tilde{e}^{n+1}_\u)+ 2 \tau B(\u^n,\tilde{e}^{n+1}_\u,\theta^{n+1}_\u)\notag\\
	\leq & 2\tau \| \e^n_\u+\theta^n_\u\|_{L^3} \| \nabla  \theta^{n+1}_\u \|_{L^2}\| \tilde{e}^{n+1}_\u\|_{L^6} 
	+ 2\tau \| \u^n\|_{L^{\infty}} \| \nabla \tilde{e}^{n+1}_\u\|_{L^2} \| \theta^{n+1}_\u\|_{L^2} \\
	\leq & C \tau \| \e^n_\u+\theta^n_\u\|_{H^1} h^{-1} \| \theta^{n+1}_\u \|_{L^2} \| \nabla  \tilde{e}^{n+1}_\u\|_{L^2}  + C \tau h^4 + \epsilon_1 \tau \| \nabla  \tilde{e}^{n+1}_\u\|_{L^2}\notag\\
	\leq & C \tau \| \e^n_\u+\theta^n_\u\|_{L^2} h^{-2} \| \theta^{n+1}_\u \|_{L^2} \| \nabla  \tilde{e}^{n+1}_\u\|_{L^2}  + C \tau h^4 + \epsilon_1 \tau \| \nabla  \tilde{e}^{n+1}_\u\|_{L^2}\notag\\
	\leq & \epsilon_1 \tau \| \nabla  \tilde{e}^{n+1}_\u\|^2_{L^2}+ C\tau \| \e^n_\u\|_{L^2}^2+C \tau h^4.\notag
\end{align}

Substituting the above estimates into (\ref{cf-uerror}), choosing $\epsilon_1$ sufficiently small, we have 
\begin{align}\label{cf-uerrorresult}
	&\| \tilde{e}^{n+1}_\u\|_{L^2}^2-\| e^{n}_\u\|_{L^2}^2+\| \tilde{e}^{n+1}_\u-e^{n}_\u\|_{L^2}^2 +2 \mu \tau \| \nabla \tilde{e}^{n+1}_\u\|_{L^2}^2\notag\\
	\leq & \tau \| R^{n+1}_\u\|_{L^2}^2+C\tau h^4+C\tau ( \| e^n_\u\|_{L^2}^2+ \|\tilde{e}^n_\eta\|^2_{L^2}).
\end{align}

Testing (\ref{cf-errorequation4}) by $e^{n+1}_\u$ and $\frac{1}{2} ( e^{n+1}_\u+ \tilde{e}^{n+1}_\u)$, we have 
\begin{align}
	\frac{1}{2} (   \| e^{n+1}_\u \|_{L^2}^2- \| \tilde{e}^{n+1}_\u\|_{L^2}^2+ \|  e^{n+1}_\u - \tilde{e}^{n+1}_\u\|_{L^2}^2)=&0,\label{cf-7}\\
	\frac{1}{2} (\| \e^{n+1}_u\|_{L^2}^2- \| \tilde{e}^{n+1}_\u\|_{L^2}^2  ) + \tau ( \nabla e^{n+1}_p- \nabla e^n_p,\tilde{e}^{n+1}_\u ) =&   \tau ( \nabla K_h ( p^{n+1}-p^{n}),\tilde{e}^{n+1}_\u).\label{cf-8}
\end{align}
where we use (\ref{divergence-free}).

By using H\"{o}lder inequality and Young inequality, one has
\begin{align}\label{cf-4}
	\tau (\nabla K_h(p^{n+1}-p^n),\tilde{e}^{n+1}_\u   ) \leq& C \tau \| K_h (p^{n+1}-p^n)\|_{L^2} \| \nabla \tilde{e}^{n+1}_\u\|_{L^2}\notag\\
	\leq & \epsilon_1 \tau | \nabla \tilde{e}^{n+1}_\u\|_{L^2}^2+ C \tau^2 \int_{t^{n}}^{t^{n+1}} \| p_t\|^2_{L^2} dt.
\end{align}

From (\ref{cf-errorequation4}), one has 
\begin{align}
\tilde{e}^{n+1}_\u =	e^{n+1}_\u + 2\tau ( \nabla e^{n+1}_p- \nabla e^n_p)  - 2 \tau ( \nabla K_h ( p^{n+1}-p^{n})).
\end{align}

Thus 
\begin{align}\label{cf-5}
&\tau ( \nabla e^{n+1}_p+ \nabla e^n_p,\tilde{e}^{n+1}_\u ) \\
=& 2 \tau^2( \| \nabla e^{n+1}_p \|^2_{L^2}-\| \nabla e^{n}_p \|^2_{L^2} ) - 2 \tau^2 (\nabla (e^{n+1}_p +e^{n}_p ) ,  \nabla K_h ( p^{n+1}-p^{n}) ) \notag
\end{align}
where
\begin{align}\label{cf-6}
	&2 \tau^2 (\nabla (e^{n+1}_p +e^{n}_p ) ,  \nabla K_h ( p^{n+1}-p^{n}) ) \notag \\
	\leq & 2\tau^2 \|\nabla (e^{n+1}_p+e^{n}_p)\|_{L^2}\| \nabla K_h ( p^{n+1}-p^{n})\|_{L^2}\\
	\leq &C \tau^{\frac{3}{2}} \|\nabla (e^{n+1}_p+e^{n}_p)\|_{L^2} \tau^{\frac{1}{2}}\| \nabla K_h ( p^{n+1}-p^{n})\|_{L^2} \notag\\
	\leq & C \tau^3 ( \|\nabla (e^{n+1}_p\|^2_{L^2} +\|\nabla e^{n}_p\|^2_{L^2}  ) + C\tau^2 \int_{t^{n}}^{t^{n+1}} \|  \nabla p_t\|^2_{L^2} dt.\notag
\end{align}

Summing up (\ref{cf-uerrorresult}), (\ref{cf-7}), (\ref{cf-8}) and considering (\ref{cf-4}), (\ref{cf-5}), (\ref{cf-6}), for sufficiently small $\epsilon_1$, we have 
\begin{align}\label{cf-9}
	&\| e^{n+1}_\u\|_{L^2}^2-\| e^{n}_\u\|_{L^2}^2+ 2 \mu \tau \| \nabla \tilde{e}^{n+1}_\u\|_{L^2}^2 + \tau^2( \| \nabla e^{n+1}_p \|^2_{L^2}-\| \nabla e^{n}_p \|^2_{L^2} ) \notag\\
	\leq &C \tau^3 ( \|\nabla e^{n+1}_p\|^2_{L^2}+\|\nabla e^{n}_p\|^2_{L^2}  ) +C\tau ( \| e^n_\u\|_{L^2}^2+ \|\tilde{e}^n_\eta\|^2_{L^2})\notag\\
	&+C \tau^2 \int_{t^{n}}^{t^{n+1}} \| p_t\|^2_{L^2} dt + C\tau^2 \int_{t^{n}}^{t^{n+1}} \|  \nabla p_t\|^2_{L^2} dt+ \tau \| R^{n+1}_\u\|_{L^2}^2+C\tau h^4.
\end{align}
Setting $w_h= 2 \tau  \tilde{e}^{n+1}_\eta $ in (\ref{cf-errorequation2}), we have 
\begin{align}\label{cf-etaerror}
	&\| \tilde{e}^{n+1}_\eta\|_{L^2}^2-\| e^{n}_\eta\|_{L^2}^2+\| \tilde{e}^{n+1}_\eta-e^{n}_\eta\|_{L^2}^2 +2a_\eta \tau \| \nabla \tilde{e}^{n+1}_\eta\|_{L^2}^2\notag\\
	=&-2\tau(\frac{\theta^{n+1}_\eta - \theta^n_\eta}{\tau},\tilde{e}^{n+1}_\eta)
	+2\tau(R^{n+1}_\eta,\tilde{e}^{n+1}_\eta)
	-2\tau b(e^n_\u,\tilde{\eta}^{n+1},\tilde{e}^{n+1}_\eta)
	-2\tau b(\theta^n_\u,\tilde{\eta}^{n+1},\tilde{e}^{n+1}_\eta) \\
	&-2\tau b(\u^n_h,\tilde{e}^{n+1}_\eta,\tilde{e}^{n+1}_\eta)
	-2\tau b(\u^n_h, \theta_\eta^{n+1},\tilde{e}^{n+1}_\eta)
	+2\tau \beta(\tilde{e}^n_\eta \nabla c^{n+1},\nabla \tilde{e}^{n+1}_\eta)
	+2\tau \beta(\theta_\eta^n \nabla c^{n+1},\nabla \tilde{e}^{n+1}_\eta) \notag\\
	&+ 2 \tau\beta(  (\tilde{\eta}^n+m_0) \nabla (e^{n+1}_c +\theta^{n+1}_c) ,\nabla  \tilde{e}^{n+1}_\eta) 
	+ 2 \tau \beta(  (\tilde{e}^n_\eta+\theta^n_\eta) \nabla (e^{n+1}_c +\theta^{n+1}_c),  \nabla \tilde{e}^{n+1}_\eta)\notag\\
		&+2\tau\beta (  (\tilde{\eta}^{n+1}-\tilde{\eta}^n) \nabla c^{n+1}, \nabla \tilde{e}^{n+1}_\eta ).\notag\\
	:=& \sum_{i=1}^{11}2\tau (Y_i ,\tilde{e}^{n+1}_\eta).\notag
\end{align}

By using $ab \leq \frac{1}{2}(a^2+b^2)$, one has
\begin{align}
	2\tau (Y_1 ,\tilde{e}^{n+1}_\eta) \leq&|2 \tau (\frac{\theta^{n+1}_\eta - \theta^n_\eta}{\tau},\tilde{e}^{n+1}_\eta) |\notag\\
	\leq&  \tau \| \tilde{e}^{n+1}_\eta\|_{L^2}^2 + \tau h^4 \|  \frac{\eta^{n+1}-\eta^n}{\tau} \|_{H^2}^2 \notag\\
	\leq &  \tau \|  \tilde{e}^{n+1}_\eta \|^2_{L^2}+C\tau h^4 \|  \eta_t(\xi_2)\|_{H^2}^2\quad \xi_2\in(t^n,t^{n+1})\\
	\leq & \tau \|  \tilde{e}^{n+1}_\eta\|^2_{L^2}+C\tau h^4.\notag\\
		2\tau (Y_2 ,\tilde{e}^{n+1}_\eta)\leq&|2 \tau  (R^{n+1}_\eta,\tilde{e}^{n+1}_\eta)\notag\\
		\leq & \tau \| R^{n+1}_\eta\|_{L^2}^2+  \tau \| \tilde{e}^{n+1}_\eta\|_{L^2}.
\end{align}

Using the regularity assumption (\ref{cf-regularity}), (\ref{cf-inequality}), H\"{o}lder inequality and Young inequality, we have
\begin{align}
		2\tau (Y_3+Y_4 ,\tilde{e}^{n+1}_\eta)\leq
	&|-2 \tau  b(e^n_\u,\tilde{\eta}^{n+1},\tilde{e}^{n+1}_\eta)-2 \tau b(\theta_{\u}^n,\tilde{\eta}^{n+1},\tilde{e}^{n+1}_\eta)| \notag \\
	\leq & 2 \tau \| e^n_\u\|_{L^2}   \| \nabla \tilde{\eta}^{n+1}\|_{L^3} \|\tilde{e}^{n+1}_\eta \|_{L^6}
	+2 \tau \| \theta^n_\u\| _{L^2} \| \nabla \tilde{\eta}^{n+1}\|_{L^3} \|\tilde{e}^{n+1}_\eta \|_{L^6} \\
	\leq &  \epsilon_2  \tau \| \nabla \tilde{e}^{n+1}_\eta \|_{L^2}^2 + C\tau  \| e^n_\u\|_{L^2}^2 +C\tau h^4.\notag
\end{align}

In terms of (\ref{biobdf-2}), (\ref{biobdf-17}) and inverse inequalities (\ref{biobdf-16}), we can get 
\begin{align}
		2\tau (Y_5+Y_6 ,\tilde{e}^{n+1}_\eta)\leq
	&|-2 \tau b(\u^n_h,\tilde{e}^{n+1}_\eta,\tilde{e}^{n+1}_\eta) -2 \tau b(\u^n_h,\theta^{n+1}_\eta,\tilde{e}^{n+1}_\eta)|\notag\\
	\leq& |2 \tau b(\e^n_\u+\theta^n_\u,\theta^{n+1}_\eta,\tilde{e}^{n+1}_\eta)+ 2 \tau b(\u^n,\tilde{e}^{n+1}_\eta, \theta^{n+1}_\eta)|\notag\\
	\leq & 2\tau \| \e^n_\u+\theta^n_\u\|_{L^3} \| \nabla  \theta^{n+1}_\eta \|_{L^2}\| \tilde{e}^{n+1}_\eta\|_{L^6} 
	+ 2\tau \| \u^n\|_{L^{\infty}} \| \nabla \tilde{e}^{n+1}_\eta\|_{L^2} \| \theta^{n+1}_\eta\|_{L^2} \\
	\leq & C \tau \| \e^n_\u+\theta^n_\u\|_{H^1} h^{-1} \| \theta^{n+1}_\eta \|_{L^2} \| \nabla  \tilde{e}^{n+1}_\eta\|_{L^2}  + C \tau h^4 + \epsilon_2 \tau\| \nabla  \tilde{e}^{n+1}_\eta\|_{L^2}\notag\\
	\leq & C \tau \| \e^n_\u+\theta^n_\u\|_{L^2} h^{-2} \| \theta^{n+1}_\eta \|_{L^2} \| \nabla  \tilde{e}^{n+1}_\eta\|_{L^2}  + C \tau h^4 + \epsilon_2\tau \| \nabla  \tilde{e}^{n+1}_\eta\|_{L^2}\notag\\
	\leq & \epsilon_2 \tau \| \nabla  \tilde{e}^{n+1}_\eta\|^2_{L^2}+ C\tau \| \e^n_\u\|_{L^2}^2+C \tau h^4.\notag
\end{align}

By using the regularity assumption (\ref{cf-regularity}), (\ref{cf-inequality}), H\"{o}lder inequality and Young inequality,  we have
\begin{align}
	2\tau (Y_7+Y_8 ,\tilde{e}^{n+1}_\eta)\leq
	&C\tau \| \tilde{e}^n_\eta+\tilde{\theta}^n_\eta\|_{L^2} \| \nabla c^{n+1}\|_{L^\infty} \| \nabla \tilde{e}^{n+1}_\eta\|_{L^2}\\
	\leq & C \tau  \| \tilde{e}^n_\eta\|_{L^2}^2+C \tau h^4+\epsilon_2 \tau \| \nabla  \tilde{e}^{n+1}_\eta\|_{L^2}.\notag
\end{align}

Applying (\ref{cf-cprojection}), H\"{o}lder inequality and Young inequality, we can derive
\begin{align}
		2\tau (Y_9 ,\tilde{e}^{n+1}_\eta)\leq
		& 2 \tau \| \tilde{\eta}^n+m_0\|_{L^{\infty}} (\| \nabla e^{n+1}_c\|_{L^2} + \| \nabla \theta^{n+1}_c\|_{L^2} ) \|\nabla \tilde{e}^{n+1}_\eta\|_{L^2}\\
		\leq &  \epsilon_2  \tau \| \tilde{e}^{n+1}_\eta\|^2_{L^2}+ C\tau \| \nabla e^{n+1}_c\|^2_{L^2} + C \tau h^4.\nonumber
\end{align}

Utilizing  the regularity assumption (\ref{cf-regularity}), (\ref{cf-cprojection}), inverse inequality (\ref{biobdf-21}), (\ref{cf-infuctionerror}), $\tau\leq Ch^2$, we have 
\begin{align}
	2\tau (Y_{10} ,\tilde{e}^{n+1}_\eta)\leq
	&C \tau \| \tilde{e}_\eta^n+\theta^n_\eta\|_{L^\infty} (\| \nabla \theta^{n+1}_c\|_{L^2} + \| \nabla e^{n+1}_c\|_{L^2})\| \nabla \tilde{e}^{n+1}_\eta\|_{L^2}\\
	\leq & C\tau h^{-1} (\tau+h^2)
	(\| \nabla \theta^{n+1}_c\|_{L^2} + \| \nabla e^{n+1}_c\|_{L^2})
	\| \nabla \tilde{e}^{n+1}_\eta\|_{L^2}\notag\\
	\leq & C \tau h^4  +\epsilon_2 \tau \|  \nabla \tilde{e}^{n+1}_\eta\|^2_{L^2}+C\tau \| \nabla e^{n+1}_c\|^2_{L^2} .\notag
	\end{align}
	
For the last term, we have 
	\begin{align}
	2\tau (Y_{11} ,\tilde{e}^{n+1}_\eta)\leq
	&2 \tau\beta (  (\tilde{\eta}^{n+1}-\tilde{\eta}^n) \nabla c^{n+1}, \nabla \tilde{e}^{n+1}_\eta ) \notag \\
	\leq &C \tau \| \tilde{\eta}^{n+1}-\tilde{\eta}^n\|_{L^2} \| \nabla c^{n+1}\|_{L^\infty} \| \nabla \tilde{e}^{n+1}_\eta \|_{L^2}\notag \\
	\leq &  \epsilon_2 \tau \| \nabla \tilde{e}^{n+1}_\eta \|_{L^2} + C\tau^2 \int_{t_{n}}^{t_{n+1}} \| \tilde{\eta}_t\|^2_{L^2}ds.
\end{align}
	Substituting the above estimates into (\ref{cf-etaerror}), we can derive
\begin{align}\label{cf-10}
	&\| \tilde{e}^{n+1}_\eta\|_{L^2}^2-\| e^{n}_\eta\|_{L^2}^2+\| \tilde{e}^{n+1}_\eta-e^{n}_\eta\|_{L^2}^2 +2a_\eta \tau \| \nabla \tilde{e}^{n+1}_\eta\|_{L^2}^2\\
	\leq & C\tau h^4 +C \tau\| \tilde{e}^{n+1}_\eta\|^2_{L^2}  + \tau \| R^{n+1}_\eta\|_{L^2}^2 + \epsilon_2  \tau \| \nabla \tilde{e}^{n+1}_\eta \|_{L^2}^2 + C\tau ( \| e^n_\u\|_{L^2}^2+ \| \tilde{e}^n_\eta\|_{L^2}^2) \notag\\
	&+\epsilon_3 \tau \| \nabla e^{n+1}_c\|^2_{L^2} 
	+C\tau^2 \int_{t_{n}}^{t_{n+1}} \| \tilde{\eta}_t\|^2_{L^2}+C\tau \| \nabla e^{n+1}_c\|^2_{L^2} .\notag
\end{align}
Setting $\psi_h=2\tau e^{n+1}_c$ in (\ref{errorequation3}), we have
\begin{align}\label{cf-cerror}
	&\| e^{n+1}_c\|_{L^2}^2-\| e^{n}_c\|_{L^2}^2+\| e^{n+1}_c-e^{n}_c\|_{L^2}^2 +2a_c \tau \| \nabla e^{n+1}_c\|_{L^2}^2\\
	= & -2 \tau(\frac{\theta^{n+1}_c - \theta^n_c}{\tau},e^{n+1}_c) 
	+2 \tau (R^{n+1}_c,e^{n+1}_c) -2 \tau b(e^n_\u,c^{n+1},e^{n+1}_c)
	-2 \tau b(\theta^n_\u,c^{n+1},e^{n+1}_c)\notag\\
	&
	-2 \tau b(\u^n_h,e^{n+1}_c,e^{n+1}_c)
	-2 \tau b(\u^n_h,\theta^{n+1}_c,e^{n+1}_c)
	-2 \tau \gamma(\tilde{e}^n_\eta c^{n+1},e^{n+1}_c)
	-2 \tau\gamma(\theta^n_\eta c^{n+1},e^{n+1}_c)\notag\\
	&-2 \tau\gamma( (\tilde{\eta}^n+m_0) (e^{n+1}_c+\theta^{n+1}_c),e^{n+1}_c )
	-2 \tau\gamma( (  \tilde{e}^n_\eta+\theta^n_\eta) (e^{n+1}_c+\theta^{n+1}_c),e^{n+1}_c )\notag\\
	:=& \sum_{i=1}^{10} 2\tau (M_i,e^{n+1}_c).\notag
	\end{align}
	
By using $ab \leq \frac{1}{2}(a^2+b^2)$, one has
\begin{align}
	2\tau (M_1 ,e^{n+1}_c) \leq&|2 \tau (\frac{\theta^{n+1}_c - \theta^n_c}{\tau},e^{n+1}_c) |\notag\\
	\leq&  \tau \| e^{n+1}_c\|_{L^2}^2 + \tau h^4 \|  \frac{c^{n+1}-c^n}{\tau} \|_{H^2}^2 \notag\\
	\leq &  \tau \|  e^{n+1}_c\|^2_{L^2}+C\tau h^4 \|  c_t(\xi_3)\|_{H^2}^2 \quad \xi_3\in(t^n,t^{n+1})\\
	\leq & \tau \|  e^{n+1}_c\|^2_{L^2}+C\tau h^4.\notag\\
	2\tau (M_1 ,e^{n+1}_c)\leq&|2 \tau  (R^{n+1}_c,e^{n+1}_c)\notag\\
	\leq & \tau \| R^{n+1}_c\|_{L^2}^2+  \tau \| e^{n+1}_c\|_{L^2}.
\end{align}

Using the regularity assumption (\ref{cf-regularity}), (\ref{cf-inequality}), H\"{o}lder inequality and Young inequality, we have
\begin{align}
	2\tau (M_3+Y_4 ,e^{n+1}_c)\leq
	&|-2 \tau  b(e^n_\u,c^{n+1},e^{n+1}_c)-2 \tau b(\theta_{\u}^n,c^{n+1},e^{n+1}_c)| \notag \\
	\leq & 2 \tau \| e^n_\u\|_{L^2}  \| \nabla c^{n+1}\|_{L^3} \|e^{n+1}_c \|_{L^3}
	+2 \tau \| \theta^n_\u\|_{L^2}  \| \nabla c^{n+1}\|_{L^3} \|e^{n+1}_c \|_{L^6} \\
	\leq &  \epsilon_3  \tau \| \nabla e^{n+1}_c \|_{L^2}^2 + C\tau  \| e^n_\u\|_{L^2}^2 +C\tau h^4.\notag
\end{align}

In terms of (\ref{biobdf-2}), (\ref{biobdf-17}) and inverse inequalities (\ref{biobdf-21}), we can get 
\begin{align}
	2\tau (M_5+M_6 ,e^{n+1}_c)\leq
	&|-2 \tau b(\u^n_h,e^{n+1}_c,e^{n+1}_c) -2 \tau b(\u^n_h,\theta^{n+1}_c,e^{n+1}_c)|\notag\\
	\leq& |2 \tau b(\e^n_\u+\theta^n_\u,\theta^{n+1}_c,e^{n+1}_c)+ 2 \tau b(\u^n,e^{n+1}_c, \theta^{n+1}_c)|\notag\\
	\leq & 2\tau \| \e^n_\u+\theta^n_\u\|_{L^3} \| \nabla  \theta^{n+1}_c \|_{L^2}\| e^{n+1}_c\|_{L^6} 
	+ 2\tau \| \u^n\|_{L^{\infty}} \| \nabla e^{n+1}_c\|_{L^2} \| \theta^{n+1}_c\|_{L^2} \\
	\leq & C \tau \| \e^n_\u+\theta^n_\u\|_{H^1} h^{-1} \| \theta^{n+1}_c \|_{L^2} \| \nabla  e^{n+1}_c\|_{L^2}  + C \tau h^4 + \epsilon_3 \tau \| \nabla  e^{n+1}_c\|_{L^2}\notag\\
	\leq & C \tau \| \e^n_\u+\theta^n_\u\|_{L^2} h^{-2} \| \theta^{n+1}_c \|_{L^2} \| \nabla  e^{n+1}_c\|_{L^2}  + C \tau h^4 + \epsilon_3 \tau \| \nabla  e^{n+1}_c\|_{L^2}\notag\\
	\leq & \epsilon_3 \tau \| \nabla  \tilde{e}^{n+1}_c\|^2_{L^2}+ C\tau \| \e^n_\u\|_{L^2}^2+C \tau h^4.\notag
\end{align}

By using H\"{o}lder inequality and Young inequality, we have
\begin{align}
	2\tau (M_7+M_8 ,e^{n+1}_c)\leq&
	C\tau ( \| \tilde{e}^n_\eta \|_{L^2} + \| \theta^n_\eta\|_{L^2}) \| c^{n+1}\|_{L^{3}} \| e^{n+1}_c\|_{L^6} \\
	\leq &C \tau h^4 +C\tau \| \tilde{e}^n_\eta \|_{L^2}^2+\epsilon_3  \tau \| \nabla e^{n+1}_c\|_{L^2}^2.\notag\\
	2\tau (M_9 ,e^{n+1}_c)\leq&
|-2 \tau\gamma( (\tilde{\eta}^n+m_0) (e^{n+1}_c+\theta^{n+1}_c),e^{n+1}_c )|\\
\leq & C\tau \| \tilde{\eta}^n+m_0\|_{L^{3}}  \| e^{n+1}_c+\theta^{n+1}_c\|_{L^2} \|e^{n+1}_c\|_{L^6}. \notag\\
\leq & C \tau h^4 + C \tau \| e^{n+1}_c\|^2_{L^2}+\epsilon_3  \tau \| \nabla  e^{n+1}_c\|^2_{L^2}.\notag
\end{align}

Utilizing  the regularity assumption (\ref{cf-regularity}), inverse inequality (\ref{biobdf-21}), we have 
\begin{align}
	2\tau (M_{10} ,e^{n+1}_c)\leq&
	|2 \tau\gamma( (  \tilde{e}^n_\eta+\theta^n_\eta) (e^{n+1}_c+\theta^{n+1}_c),e^{n+1}_c )|\\
	\leq &  C \tau \| \tilde{e}^n_\eta+\theta^n_\eta\|_{L^4}  \| e^{n+1}_c+\theta^{n+1}_c\|_{L^2} \| e^{n+1}_c\|_{L^4}\notag\\
	\leq& C \tau h^{-\frac{1}{2}} \| \tilde{e}^n_\eta+\theta^n_\eta\|_{L^2} \| e^{n+1}_c\|_{L^2} \| \nabla e^{n+1}_c\|_{L^2} 
	+ C \tau h^{-\frac{1}{2}} \| \tilde{e}^n_\eta+\theta^n_\eta\|_{L^2} \| \theta^{n+1}_c\|_{L^2} \| \nabla e^{n+1}_c\|_{L^2}  \notag\\
	\leq &\epsilon_3\tau \| \nabla  \tilde{e}^{n+1}_c\|^2_{L^2} +C\tau h^{-1}\| \tilde{e}^n_\eta+\theta^n_\eta\|_{L^2}^2 \|e^{n+1}_c\|^2_{L^2} + C\tau h^3 \|\tilde{e}^n_\eta+\theta^n_\eta\|_{L^2}^2.\notag
\end{align}

Substituting the above estimates into (\ref{cf-cerror}), we can derive
\begin{align}\label{cf-11}
		&\| e^{n+1}_c\|_{L^2}^2-\| e^{n}_c\|_{L^2}^2+\| e^{n+1}_c-e^{n}_c\|_{L^2}^2 +2a_c \tau \| \nabla e^{n+1}_c\|_{L^2}^2\\
		\leq& C \tau \| e^{n+1}_c\|_{L^2}^2+ \epsilon_3 \tau \| \nabla e^{n+1}_c\|_{L^2}^2+ C\tau h^4 + C\tau (\| e^n_\u\|^2_{L^2} + \| \tilde{e}^n_\eta\|^2_{L^2})\notag\\
		&+C\tau h^{-1}\| \tilde{e}^n_\eta+e^n_\eta\|_{L^2}^2 \|e^{n+1}_c\|^2_{L^2} + C\tau h^3 \|\tilde{e}^n_\eta+\theta^n_\eta\|_{L^2}^2.\notag
\end{align}

Thus, from (\ref{cf-9}), (\ref{cf-10}), (\ref{cf-11}), for sufficiently small $\epsilon_1,\epsilon_2,\epsilon_3$, we have
\begin{align}
	&\| e^{n+1}_\u\|_{L^2}^2-\| e^{n}_\u\|_{L^2}^2+ 2 \mu \tau \| \nabla \tilde{e}^{n+1}_\u\|_{L^2}^2 + \tau^2( \| \nabla e^{n+1}_p \|^2_{L^2}-\| \nabla e^{n}_p \|^2_{L^2} ) \\
	&+\| \tilde{e}^{n+1}_\eta\|_{L^2}^2-\| e^{n}_\eta\|_{L^2}^2+\| \tilde{e}^{n+1}_\eta-e^{n}_\eta\|_{L^2}^2 +2a_\eta \tau \| \nabla \tilde{e}^{n+1}_\eta\|_{L^2}^2\nonumber\\
	&+\| e^{n+1}_c\|_{L^2}^2-\| e^{n}_c\|_{L^2}^2+\| e^{n+1}_c-e^{n}_c\|_{L^2}^2 +2a_c \tau \| \nabla e^{n+1}_c\|_{L^2}^2\notag\\
	\leq &C \tau^3 ( \|\nabla (e^{n+1}_p\|^2_{L^2}+\|\nabla e^{n}_p\|^2_{L^2}  ) + C\tau(  \| \tilde{e}^{n+1}_\eta\|^2_{L^2} +\tau \| e^{n+1}_c\|_{L^2}^2+\| e^n_\u\|_{L^2}^2+ \|\tilde{e}^n_\eta\|^2_{L^2} )\nonumber \\
	&+C \tau^2 \int_{t^{n}}^{t^{n+1}} \| p_t\|^2_{L^2} dt + C\tau^2 \int_{t^{n}}^{t^{n+1}} \|  \nabla p_t\|^2_{L^2} dt+ C\tau ( \| R^{n+1}_\u\|_{L^2}^2+ \| R^{n+1}_\eta\|_{L^2}^2+ \| R^{n+1}_c\|_{L^2}^2)+C\tau h^4\nonumber\\
	& +C\tau h^{-1}\| \tilde{e}^n_\eta+\theta^n_\eta\|_{L^2}^2 \|e^{n+1}_c\|^2_{L^2} + C\tau h^3 \|\tilde{e}^n_\eta+\theta^n_\eta\|_{L^2}^2+ C\tau^2 \int_{t_{n}}^{t_{n+1}} \| \tilde{\eta}_t\|^2_{L^2}ds.\nonumber
\end{align}

Summing up from 0 to $N-1$,
in terms of mathematical induction method (\ref{cf-infuctionerror}), regularity assumptions (\ref{cf-regularity}), (\ref{cf-12}), then under the time size condition $\tau \leq Ch^2$ and for sufficiently small $h$ such that 
\begin{align}
	&\| e^{N}_\u\|_{L^2}^2+\| e^{N}_\eta\|_{L^2}^2+ \| e^{N}_c\|_{L^2}^2+2 \mu \tau \sum_{n=0}^{N-1}(\| \nabla \tilde{e}^{n+1}_\u\|_{L^2}^2 +\| \nabla \tilde{e}^{n+1}_\eta\|_{L^2}^2 + \| \nabla e^{n+1}_c\|_{L^2}^2)+\tau^2\| \nabla e^{N}_p \|^2_{L^2}  \\
	\leq & C(\tau^2 +h^4) +C \tau^3 ( \|\nabla e^{n+1}_p\|^2_{L^2}+\|\nabla e^{n}_p\|^2_{L^2}  ) + C\tau(  \| \tilde{e}^{n+1}_\eta\|^2_{L^2} +\tau \| e^{n+1}_c\|_{L^2}^2+\| e^n_\u\|_{L^2}^2+ \|\tilde{e}^n_\eta\|^2_{L^2} )\nonumber
	\end{align}
	
Application of Gronwall's inequality in Lemma \ref{biobdf-11} yields (\ref{cf-errorresult2}). The proof of Lemma \ref{cf-therror2} is completed.
	\end{proof}

\subsection{The proof of Theorem \ref{cf-therror}}
\begin{proof}
	According the projection error (\ref{cf-uprojection}), (\ref{cf-etaprojection}), (\ref{cf-cprojection}) and Lemma \ref{cf-therror2}, we can derive 
	\begin{align}\label{cf-uiresult}
		\| \u^i-\u^i_h\|_{L^2} =\| \u^i-R_h \u^i + R_h \u^i -\u^i_h\|_{L^2} \leq \| \u^i-R_h \u^i \|_{L^2} + \| e^i_\u\|_{L^2} \leq C(\tau^2+h^4).
	\end{align}
	
Similarly 

	\begin{align}
	\| \eta^i-\eta^i_h\|_{L^2} = &	\| (\tilde{\eta}^i+m_0)-(\tilde{\eta}^i_h+m_0)\|_{L^2}\label{cf-etairesult}\\
	 =&	\| \tilde{\eta}^i-\tilde{\eta}^i_h\|_{L^2}\notag\\
	 =&\| \eta^i-T_h \eta^i + T_h \eta^i -\eta^i_h\|_{L^2}\notag\\
	  \leq &\| \eta^i-T_h \eta^i \|_{L^2} + \| e^i_\eta\|_{L^2} \notag\\
	  \leq &C(\tau^2+h^4),\notag\\
	\| c^i-c^i_h\|_{L^2} =&\| c^i-\Pi_h c^i + \Pi_h c^i -c^i_h\|_{L^2}\notag\\
	 \leq &\| c^i-\Pi_h c^i \|_{L^2} + \| e^i_c\|_{L^2}\notag\\
	  \leq &C(\tau^2+h^4).\label{cf-ciresult}
\end{align}

Thus, we complete the proof of Theorem \ref{cf-therror}.
\end{proof}

Now the convergence analysis by the projection finite element method is done. Using the error estimates (\ref{cf-uiresult}), (\ref{cf-etairesult}), (\ref{cf-ciresult}) and the regularity assumption (\ref{cf-regularity}), we are now in a position to derive the uniform bound results for the fully discrete scheme, as detailed below.
\begin{theorem}
Under the regularity assumption (\ref{cf-regularity}), for sufficiently small $\tau$ and $h$, the following unconditional stability results satisfy
\begin{align}
	\| \u^i_h\|_{L^2}^2 + 	\| \eta^i_h\|_{L^2}^2+	\| c^i_h\|_{L^2}^2  \leq C.
\end{align}
for $1 \leq i \leq N$, which is difficult to derive by using the energy analysis method due to the nonlinear term $\beta \div(\eta \nabla c)$ in (\ref{cfmodel}).
\end{theorem}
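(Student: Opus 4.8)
The plan is to read off these a priori bounds directly from the convergence result of Theorem~\ref{cf-therror}, by writing each discrete quantity as the exact solution minus an already-controlled error, rather than running a fresh energy estimate; this is exactly the route that sidesteps the obstruction the chemotactic term $\beta\,\mathrm{div}(\eta\nabla c)$ creates for a direct energy argument.

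For the zeroth-order terms I would write $\u^i_h=\u^i-(\u^i-\u^i_h)$ and use $\|\u^i_h\|_{L^2}^2\le 2\|\u^i\|_{L^2}^2+2\|\u^i-\u^i_h\|_{L^2}^2$. Assumption~\textbf{A1} gives $\u\in L^\infty(0,T;W^{2,4})\hookrightarrow L^\infty(0,T;L^2)$, hence $\|\u^i\|_{L^2}\le C$ uniformly in $i$, while Theorem~\ref{cf-therror} (equivalently (\ref{cf-uiresult})) gives $\|\u^i-\u^i_h\|_{L^2}^2\le C(\tau^2+h^4)\le C$. The same step, with (\ref{cf-etairesult})--(\ref{cf-ciresult}) and $\eta,c\in L^\infty(0,T;W^{2,4})$ replacing the velocity bounds, controls $\|\eta^i_h\|_{L^2}$ and $\|c^i_h\|_{L^2}$.

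For the dissipative sums I would decompose $\nabla\u^i_h=\nabla\u^i-\nabla\theta^i_\u-\nabla e^i_\u$ with $\theta^i_\u=\u^i-R_h\u^i$. Here $\tau\sum_{i=1}^N\|\nabla\u^i\|_{L^2}^2\le T\|\u\|_{L^\infty(0,T;H^1)}^2\le C$ by~\textbf{A1}; the projection part satisfies $\tau\sum_{i=1}^N\|\nabla\theta^i_\u\|_{L^2}^2\le Ch^2T$ by the Stokes-projection bound behind (\ref{cf-uprojection}); and the discrete-error part is handled by the summed estimate derived inside the proof of Lemma~\ref{cf-therror2} (its final displayed inequality), namely $\tau\sum_{n=0}^{N-1}(\|\nabla\tilde{e}^{n+1}_\u\|_{L^2}^2+\|\nabla\tilde{e}^{n+1}_\eta\|_{L^2}^2+\|\nabla e^{n+1}_c\|_{L^2}^2)+\tau^2\|\nabla e^N_p\|_{L^2}^2\le C(\tau^2+h^4)$. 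For $\eta$ and $c$ this closes the argument immediately once the Ritz-projection bounds (\ref{cf-etaprojection}), (\ref{cf-cprojection}) are added (noting $\eta^i_h=\tilde\eta^i_h+m_0$, so $\nabla\eta^i_h=\nabla\tilde\eta^i_h$, and that $e_c$ is already the end-of-step error); for the velocity one must still pass from $\tilde{e}^{n+1}_\u$ to $e^{n+1}_\u$ through the splitting relation (\ref{cf-errorequation4}), $e^{n+1}_\u-\tilde{e}^{n+1}_\u=-2\tau(\nabla e^{n+1}_p-\nabla e^n_p)-2\tau\nabla K_h(p^{n+1}-p^n)$, so that $\tau\sum\|\nabla e^{n+1}_\u\|_{L^2}^2$ picks up only the already-controlled $\tau^2\sum\|\nabla e^{n+1}_p\|_{L^2}^2$ (absorbed via the $\tau^2\|\nabla e^N_p\|_{L^2}^2$ term together with a summation-by-parts step) plus a consistency contribution governed by $p\in L^\infty(0,T;H^1)$ and $p_t\in L^2(0,T;H^1)$.

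The hard part will be precisely this last passage from the intermediate velocity $\tilde\u^{n+1}_h$ to the projected velocity $\u^{n+1}_h$, since a pressure-correction scheme naturally delivers a discrete $H^1$ bound on $\tilde\u^{n+1}_h$ but not directly on $\u^{n+1}_h$; I would resolve it exactly as sketched, exploiting the $\tau^2\sum\|\nabla e^{n+1}_p\|_{L^2}^2$-type control already available from the proof of Lemma~\ref{cf-therror2} (so that, after multiplying by $\tau$ and summing in $n$, the extra contribution is $O(\tau)$) and the stated regularity of $p$ and $p_t$. Assembling all pieces bounds the left-hand side of the claim by $C(1+\tau^2+h^4)\le C$; the only smallness conditions on $\tau$ and $h$ that enter are those already imposed in Theorem~\ref{cf-therror}, so nothing new is required.
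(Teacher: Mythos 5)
Your overall route is the same one the paper takes: the paper gives no separate proof of this theorem, but introduces it with the remark that the stability follows from the error estimates (\ref{cf-uiresult})--(\ref{cf-ciresult}) and the regularity assumption (\ref{cf-regularity}), i.e.\ exactly your ``exact solution minus controlled error'' triangle-inequality argument, supplemented by the summed gradient bounds that appear on the left-hand side of the pre-Gronwall display in the proof of Lemma \ref{cf-therror2}. For the three $L^2$ terms, for the $\eta$- and $c$-gradient sums, and for the splitting $\nabla\u^i_h=\nabla\u^i-\nabla\theta^i_\u-\nabla e^i_\u$ with the $H^1$-stability of the Stokes/Ritz projections, your elaboration is sound and in fact more explicit than anything in the paper.

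However, the mechanism you propose for the one step you yourself flag as hard --- passing from $\tilde e^{n+1}_\u$ to $e^{n+1}_\u$ in the $H^1$ seminorm via (\ref{cf-errorequation4}) at the cost of ``only $\tau^2\sum\|\nabla e^{n+1}_p\|^2_{L^2}$ plus a consistency term'' --- does not work as written. Relation (\ref{cf-errorequation4}) controls the \emph{$L^2$} difference $e^{n+1}_\u-\tilde e^{n+1}_\u$ in terms of $\tau\|\nabla e^{n+1}_p-\nabla e^n_p\|_{L^2}$ and $\tau\|\nabla K_h(p^{n+1}-p^n)\|_{L^2}$; to bound $\|\nabla e^{n+1}_\u\|_{L^2}$ you would have to take a gradient of that identity, which produces second derivatives of the discrete pressures. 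Since $p^{n+1}_h$ and $K_hp^{n+1}$ are continuous piecewise linears, $\nabla p^{n+1}_h$ is a discontinuous piecewise constant, so $\u^{n+1}_h=\tilde\u^{n+1}_h-2\tau(\nabla p^{n+1}_h-\nabla p^n_h)$ does not belong to $H^1(\Omega)^2$ at all, and the quantities $\|\nabla e_p\|_{L^2}$, $p\in L^\infty(0,T;H^1)$, $p_t\in L^2(0,T;H^1)$ are of the wrong order to help. The repair is simple and worth stating: either interpret $\nabla\u^i_h$ elementwise (broken gradient), in which case the elementwise Hessian of the piecewise-linear pressures vanishes and $\nabla\u^{n+1}_h$ coincides on each element with $\nabla\tilde\u^{n+1}_h$, so the bound $\tau\sum_n\|\nabla\tilde e^{n+1}_\u\|^2_{L^2}\leq C(\tau^2+h^4)$ from Lemma \ref{cf-therror2} transfers with no extra terms; or simply state the gradient estimate for the intermediate velocity $\tilde\u^i_h$, which is what the projection scheme (and the paper's own analysis) naturally controls. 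With that correction your argument closes, and no smallness conditions beyond those of Theorem \ref{cf-therror} are needed, in agreement with the paper.
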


\section{Numerical experiences}\label{numericalresult}
\subsection{Convergence test}
In this section, we will present the numerical results to verify our theoretical analysis. All programs are implemented using the free finite element software FreeFem++ \cite{Hecht2012NewDI}. For the sake of simplicity, we solve the following coupled system with the artificial functions $g_1 ,g_2$ and \(\mathbf{f}\):
\begin{align}
	\eta_t - \alpha_\eta \Delta \eta + \beta \, \text{div} (\eta \nabla c) + u \cdot \nabla \eta = g_1, &  \quad  x \in \Omega, \, t>0,\\
	c_t - \alpha_c \Delta c + u \cdot \nabla c + \gamma \eta c = g_2, &\quad  x \in \Omega, \, t>0,\\
	u_t + (u \cdot \nabla) u - \nu \Delta u + \nabla p-\eta \nabla \phi = \textbf{f}, &\quad  x \in \Omega, \, t>0,\\
	\text{div} \, u = 0, & \quad  x \in \Omega, \, t>0.
\end{align}
in $\Omega \times [0, T]$, where $\Omega$ is the unit  square:
$$ \Omega = \{ (x,y)\in \mathbb{R}^2 : 0< x <1, 0<y<1   \}.$$

We set the final time $T=1$, $a_\eta=a_c=\mu=1$. To select the approximate functions $g$ and $\f$, we determine the exact solution  $(\sigma,\u,p)$ as follows
\begin{align}\left\{\begin{aligned}
		&\u_1(x,y,t)= sin(\pi x) cos (\pi y) sin (t),\\
		&\u_2(x,y,t)= -cos(\pi x) sin (\pi y) sin (t),\\
		& p(x,y,t)= cos(\pi x) cos(\pi y) sin(t),\\
		& \eta(x,y,t) = cos(2 \pi x ) cos(\pi y) sin(t),\\
		&c(x,y,t)= cos( \pi x ) cos(2\pi y) sin(t).
	\end{aligned}\right.\end{align}
Denote
\begin{align*}
	\|r - r_h\|_{L^2} &= \|r(t_N) - r_h^N\|_{L^2}, \\
	\|\mathbf{v} - \mathbf{v}_h\|_{L^2} &= \|\mathbf{v}(t_N) - \mathbf{v}_h^N\|_{L^2}.
\end{align*}

The numerical results are obtained by taking various grid sizes 
$ h = \frac{1}{4}, \frac{1}{8}, \dots $, 
where the meshes are generated from uniform triangular discretizations. 
By fixing the time step $\tau = 1/1000$ and refining the spatial grid size $h$, 
the spatial convergence rates are evaluated based on the numerical results 
reported in Tables~\ref{label6}--\ref{label8} and illustrated in Figure~\ref{tau1000}. 
Furthermore, Figures~\ref{velocity}--\ref{concentration} display 
the computed numerical solutions of the velocity field $\mathbf{u}$, 
the pressure $p$, the cell density $\eta$, and the chemical concentration $c$ 
at different time instants 
$t = 0,\, 0.2,\, 0.4,\, 0.6,\, 0.8,\, 1.0$. 
To examine the temporal convergence rate, we fix the spatial mesh size at 
$h = 1/200$ and vary the time step size 
$\tau = \frac{1}{4}, \frac{1}{8}, \dots$.
The convergence rates of the velocity, pressure, cell density, and concentration 
in the $L^2$-norm are summarized in Table~\ref{label2} and Figure \ref{h200},  
which show good agreement with the theoretical predictions. 
In addition, the relative errors are reported in Table~\ref{label4}. 

In conclusion, all numerical experiments demonstrate that the proposed algorithm 
achieves the expected convergence behavior and effectively validates 
its accuracy and robustness.

\begin{figure}
	\centering
	\includegraphics[width=0.7\linewidth]{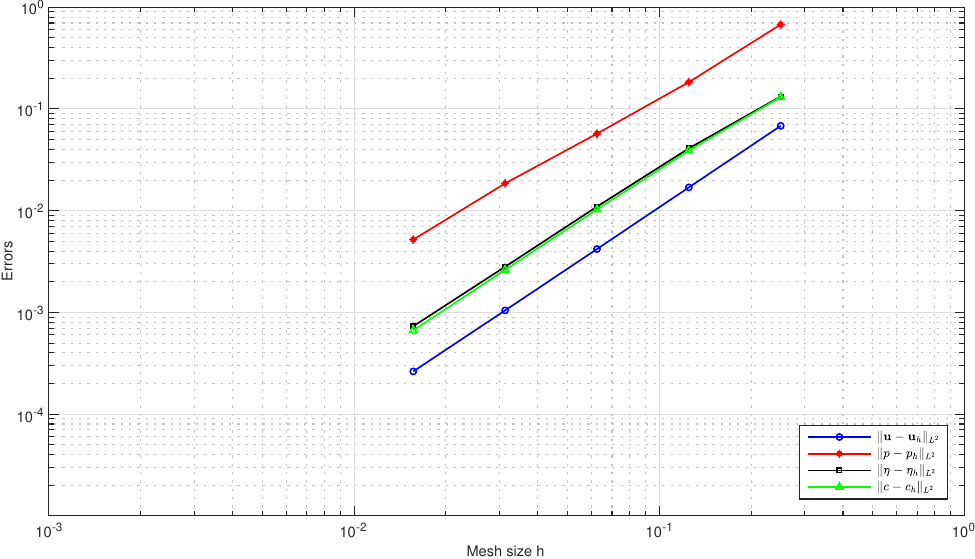}
	\caption{Convergence history of $(\u,p,\eta,c)$ for different $h$.}
	\label{tau1000}
\end{figure}

\begin{figure}
	\centering
	\includegraphics[width=0.7\linewidth]{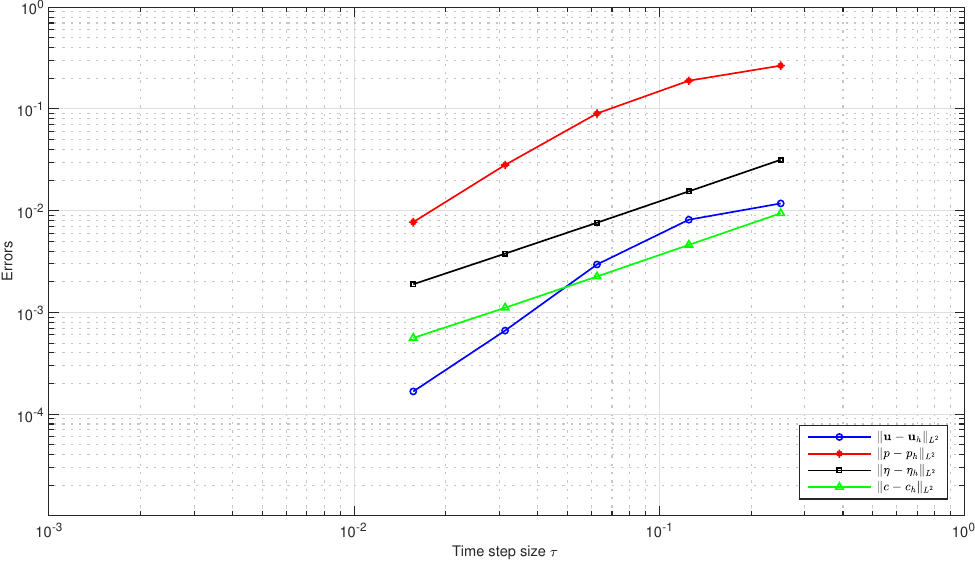}
	\caption{Convergence history of $(\u,p,\eta,c)$ for different $\tau$.}
	\label{h200}
\end{figure}


\begin{table}[htbp]  
	\centering  
	\caption{Numerical errors and convergence rates of $ \textbf{u},p,\eta,c$ in $L^2$-norm with $\tau=1/1000$}   
	\rowcolors{1}{gray!20}{white} 
	\setlength{\tabcolsep}{2.6mm}{  
		\begin{tabular}{ccccccccc}         
			$h$& $\|\textbf{u}-\textbf{u}_h\|_{L^2}$ & Order  & $\| p-p_h\|_{L^2} $  & Order  & $\| \eta-\eta_h\|_{L^2} $ & Order& $\| c-c_h\|_{L^2} $& Order \\   
1/4   & 0.068218 &       & 0.676031 &       & 0.132643 &       & 0.130808 &  \\    1/8   & 0.016933 & 2.01  & 0.183462 & 1.88  & 0.040973 & 1.69  & 0.038975 & 1.75  \\    1/16  & 0.004196 & 2.01  & 0.057134 & 1.68  & 0.010961 & 1.90  & 0.010287 & 1.92  \\    1/32  & 0.001047 & 2.00  & 0.018544 & 1.62  & 0.00281 & 1.96  & 0.002619 & 1.97  \\    1/64  & 2.63E-04 & 1.99  & 5.19E-03 & 1.84  & 7.31E-04 & 1.94  & 0.000666 & 1.98  \end{tabular} }
	\label{label6}
\end{table}%


\begin{table}[htbp]  
	\centering  
	\caption{Numerical errors and convergence rates of $ \textbf{u},\eta,c$ in $H^1$-norm with $\tau=1/1000$}  
	\rowcolors{1}{gray!20}{white} 
	\setlength{\tabcolsep}{5mm}{  
		\begin{tabular}{ccccccc}          
			$h$& $\|\textbf{u}-\textbf{u}_h\|_{H^1}$ & Order   & $\| \eta-\eta_h\|_{H^1} $ & Order& $\| c-c_h\|_{H^1} $& Order \\   
	 1/4   & 1.07199 &       & 1.55715 &       & 1.53617 &  \\    1/8   & 0.510077 & 1.07  & 0.837474 & 0.89  & 0.831965 & 0.88  \\    1/16  & 0.250376 & 1.03  & 0.427146 & 0.97  & 0.426257 & 0.96  \\    1/32  & 0.124114 & 1.01  & 0.214758 & 0.99  & 0.214623 & 0.99  \\    1/64  & 0.06177 & 1.01  & 0.107549 & 1.00  & 0.107519 & 1.00  \\ 
	\end{tabular} }
	\label{label7}
\end{table}%

\begin{table}[htbp]  
	\centering  
	\caption{Relative errors and convergence rates of $ \textbf{u},p,\eta,c$ with $\tau=1/1000$}   
	\rowcolors{1}{gray!20}{white} 
	\setlength{\tabcolsep}{2.7mm}{   \begin{tabular}{ccccccccc}       
			$h$&$\frac{\| \u-\u_h\|_{L^2}}{\| \u\|_{L^2}}$   & Order &$\frac{\| p-p_h\|_{L^2}}{\| p\|_{L^2}}$ & Order &$\frac{\| \eta-\eta_h\|_{L^2}}{\| \eta\|_{L^2}}$ & Order&$\frac{\| c-c_h\|_{L^2}}{\| c\|_{L^2}}$  &Order\\    
1/4   & 0.127267 &       & 0.936792 &       & 0.421115 &       & 0.412925 &  \\    1/8   & 0.029192 & 2.12  & 0.579032 & 0.69  & 0.105682 & 1.99  & 0.100285 & 2.04  \\    1/16  & 0.007097 & 2.04  & 0.215922 & 1.42  & 0.026613 & 1.99  & 0.024957 & 2.01  \\    1/32  & 0.001762 & 2.01  & 0.073336 & 1.56  & 0.006715 & 1.99  & 0.006257 & 2.00  \\    1/64  & 0.000442 & 2.00  & 0.022099 & 1.73  & 0.00174 & 1.95  & 0.001584 & 1.98  
	\end{tabular}}
	\label{label8}
\end{table}%



\begin{table}[htbp]  
	\centering  
	\caption{Numerical errors and convergence rates of $ \textbf{u},p,\eta,c$ in $L^2$-norm with $h=1/200$}   
	\rowcolors{1}{gray!20}{white} 
	\setlength{\tabcolsep}{2.6mm}{  
		\begin{tabular}{ccccccccc}         
			$\tau$ & $\|\textbf{u}-\textbf{u}_h\|_{L^2}$ & Order  & $\| p-p_h\|_{L^2} $  & Order  & $\| \eta-\eta_h\|_{L^2} $ & Order& $\| c-c_h\|_{L^2} $& Order \\   
		 1/4   & 0.011778 &       & 0.266525 &       & 0.031656 &       & 0.009429 &  \\    
1/8   & 0.008151 & 0.53  & 0.189623 & 0.49  & 0.015486 & 1.03  & 0.004609 & 1.03  \\   
1/16  & 0.002958 & 1.46  & 0.090238 & 1.07  & 0.007618 & 1.02  & 0.002254 & 1.03  \\    1/32  & 0.000663 & 2.16  & 0.028167 & 1.68  & 0.003782 & 1.01  & 0.00111 & 1.02  \\   
1/64  & 1.67E-04 & 1.99  & 7.72E-03 & 1.87  & 1.89E-03 & 1.00  & 0.000561 & 0.98   \end{tabular} }
	\label{label2}
\end{table}%

\begin{table}[htbp]  
	\centering  
	\caption{Relative errors and convergence rates of $ \textbf{u},p,\eta,c$ with $h=1/200$}   
	\rowcolors{1}{gray!20}{white} 
	\setlength{\tabcolsep}{2.7mm}{   \begin{tabular}{ccccccccc}       
			$\tau$   &$\frac{\| \u-\u_h\|_{L^2}}{\| \u\|_{L^2}}$   & Order &$\frac{\| p-p_h\|_{L^2}}{\| p\|_{L^2}}$ & Order &$\frac{\| \eta-\eta_h\|_{L^2}}{\| \eta\|_{L^2}}$ & Order&$\frac{\| c-c_h\|_{L^2}}{\| c\|_{L^2}}$  &Order\\    
	1/4   & 0.019809 &       & 5.96817 &       & 0.075136 &       & 0.022442 &  \\    
1/8   & 0.013706 & 0.53  & 1.46102 & 2.03  & 0.036812 & 1.03  & 0.010964 & 1.03  \\    
1/16  & 0.004973 & 1.46  & 0.281443 & 2.38  & 0.018112 & 1.02  & 0.005359 & 1.03  \\   
1/32  & 0.001114 & 2.16  & 0.05449 & 2.37  & 0.008992 & 1.01  & 0.00264 & 1.02  \\   
1/64  & 0.00028 & 1.99  & 0.013026 & 2.06  & 0.004493 & 1.00  & 0.001334 & 0.99 
	\end{tabular}}
	\label{label4}
\end{table}%


\begin{figure}[htbp] 
	
	\centering 

	\begin{minipage}{0.32\textwidth} 
		\centering  
		\includegraphics[width=\textwidth]{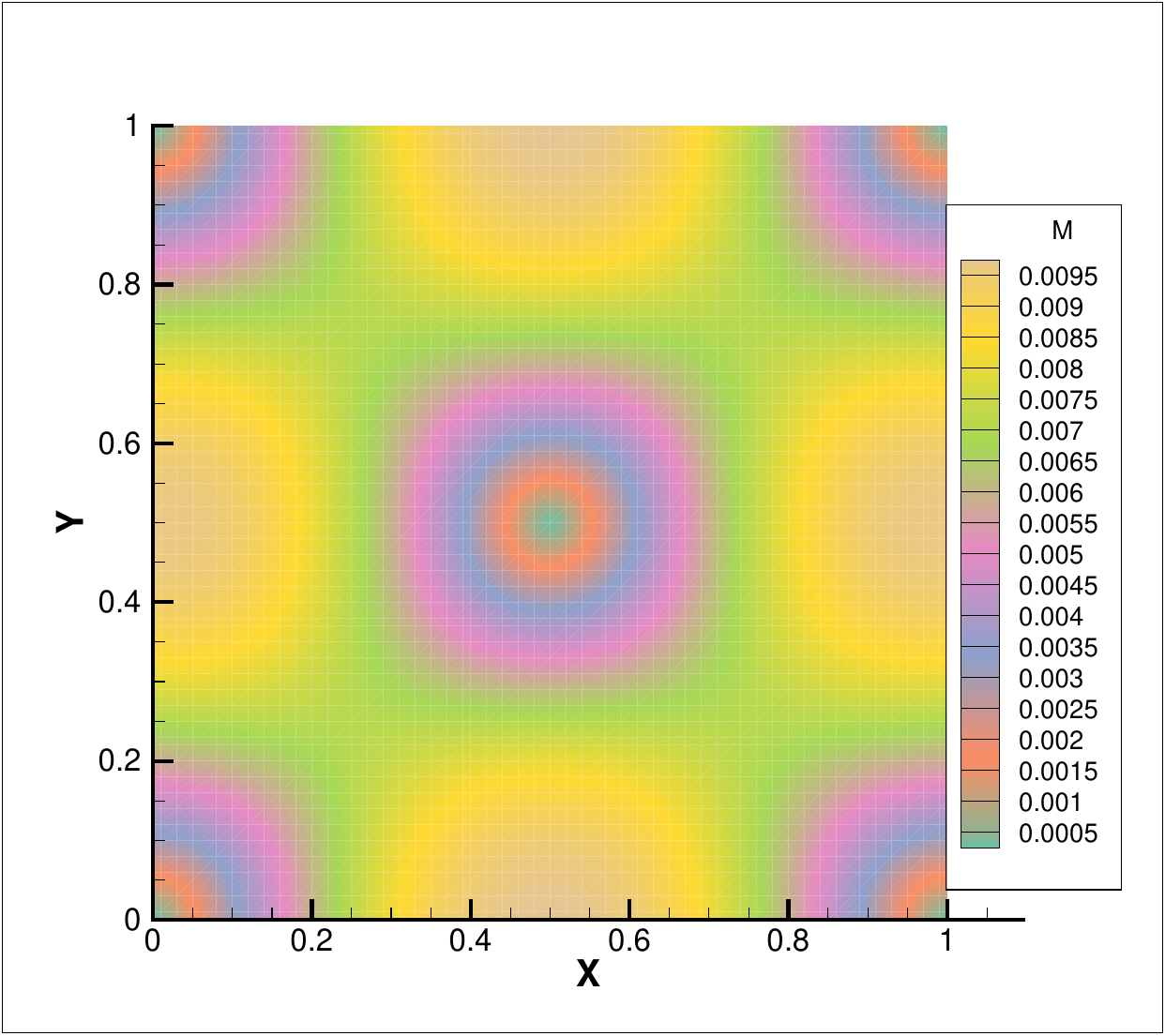}  
	\end{minipage}  
	\begin{minipage}{0.32\textwidth} 
		\centering  
		\includegraphics[width=\textwidth]{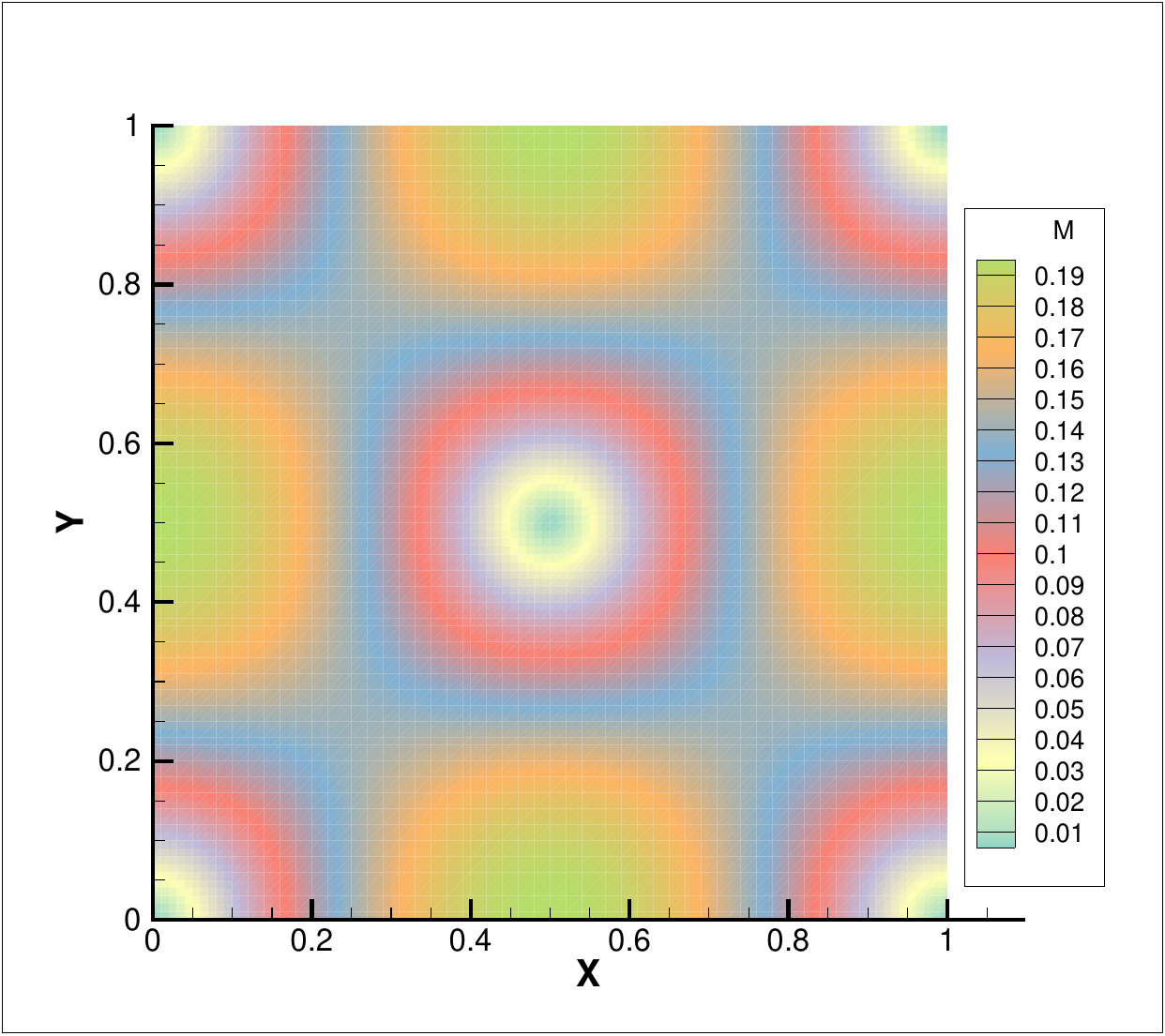} 
	\end{minipage}  
	\begin{minipage}{0.32\textwidth}  
		\centering  
		\includegraphics[width=\textwidth]{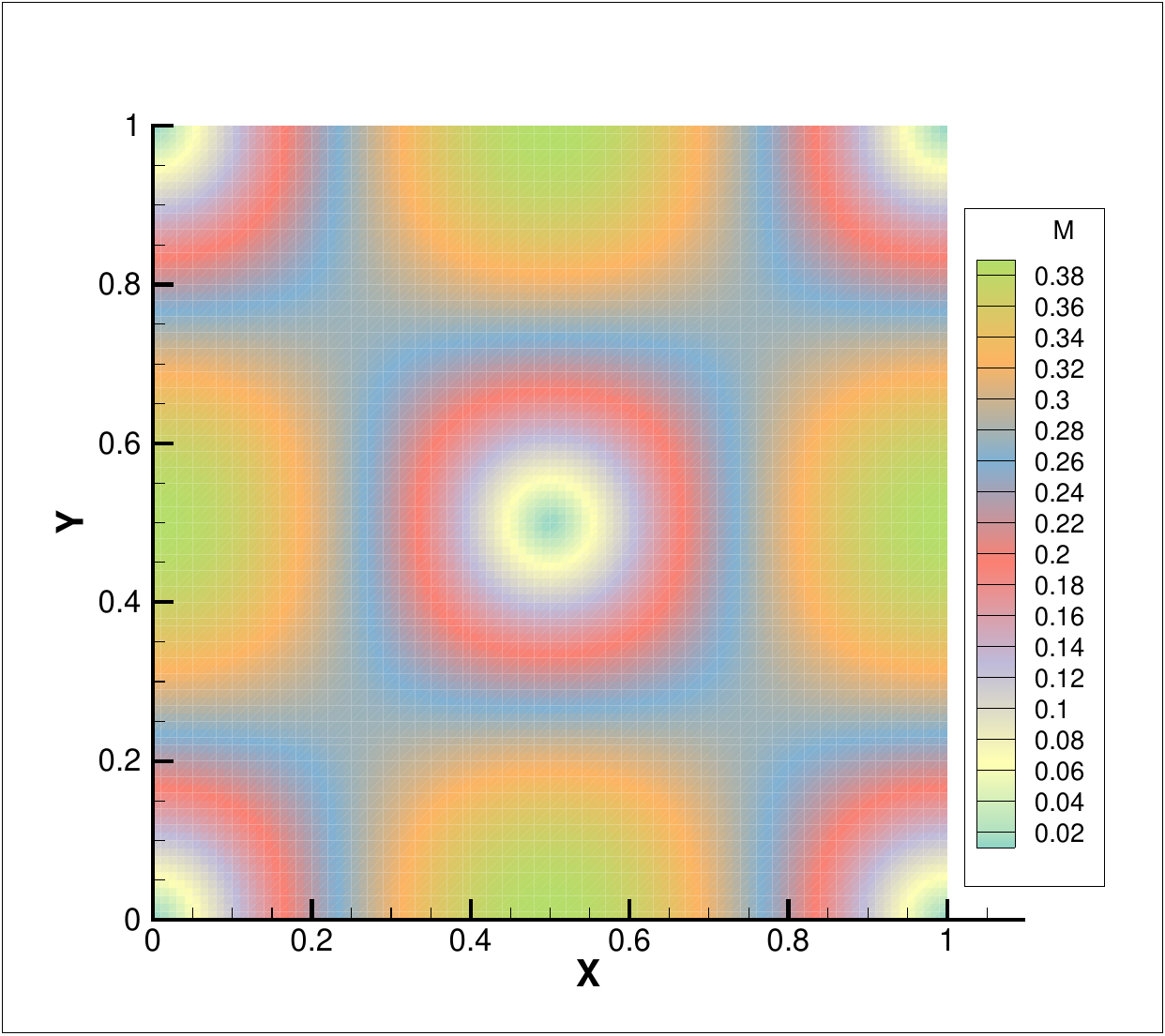}  
	\end{minipage}  
	
	\begin{minipage}{0.32\textwidth} 
		\centering  
		\includegraphics[width=\textwidth]{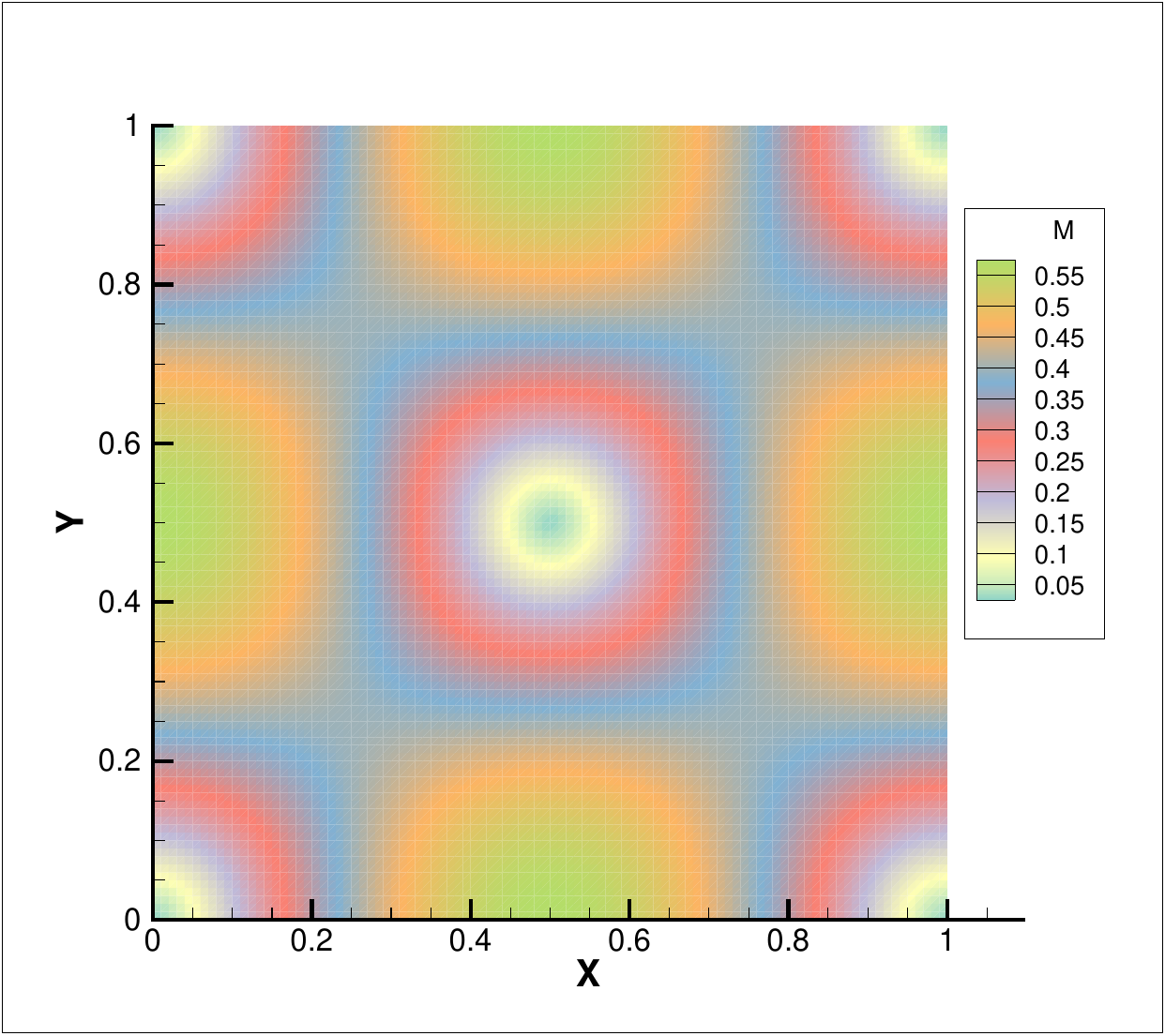}  
	\end{minipage}  
	\begin{minipage}{0.32\textwidth} 
		\centering  
		\includegraphics[width=\textwidth]{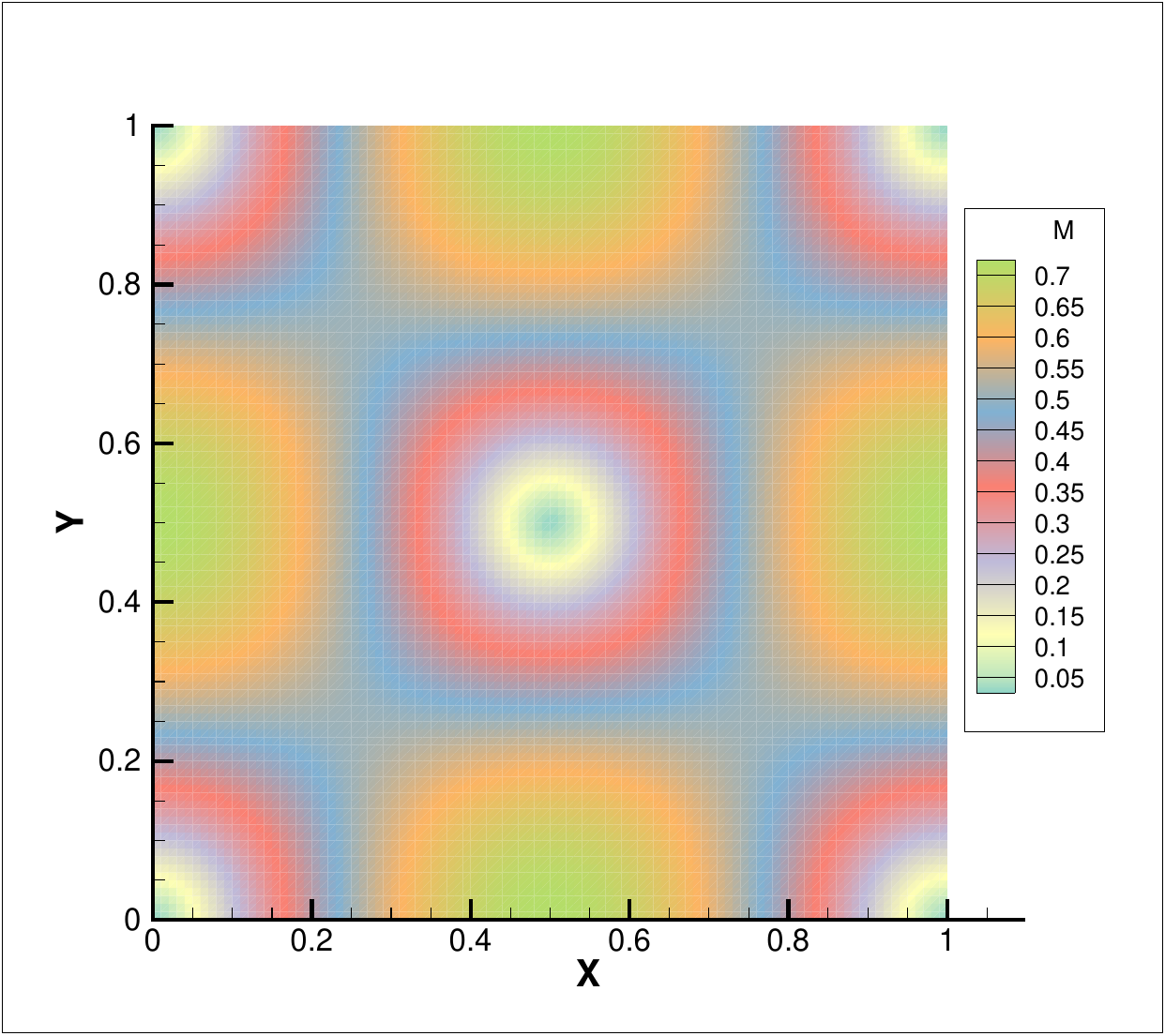} 
	\end{minipage}  
	\begin{minipage}{0.32\textwidth}  
		\centering  
		\includegraphics[width=\textwidth]{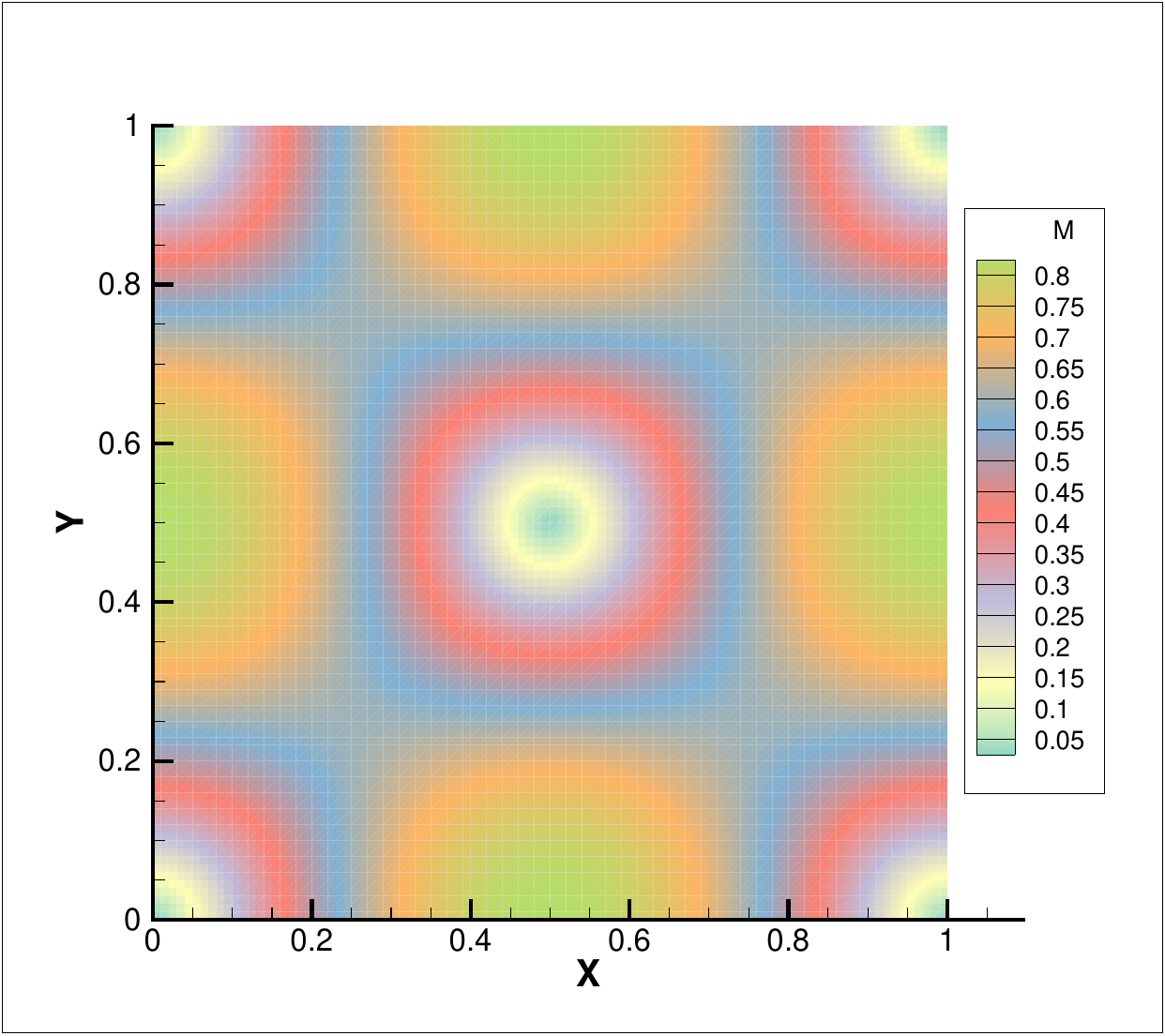}  
	\end{minipage}  
	\caption{Numerical solutions of velocity at times t = 0, 0.2, 0.4, 0.6, 0.8, 1.0.}  
	\label{velocity}  
\end{figure}

\begin{figure}[htbp] 
	
	\centering 

	\begin{minipage}{0.32\textwidth} 
		\centering  
		\includegraphics[width=\textwidth]{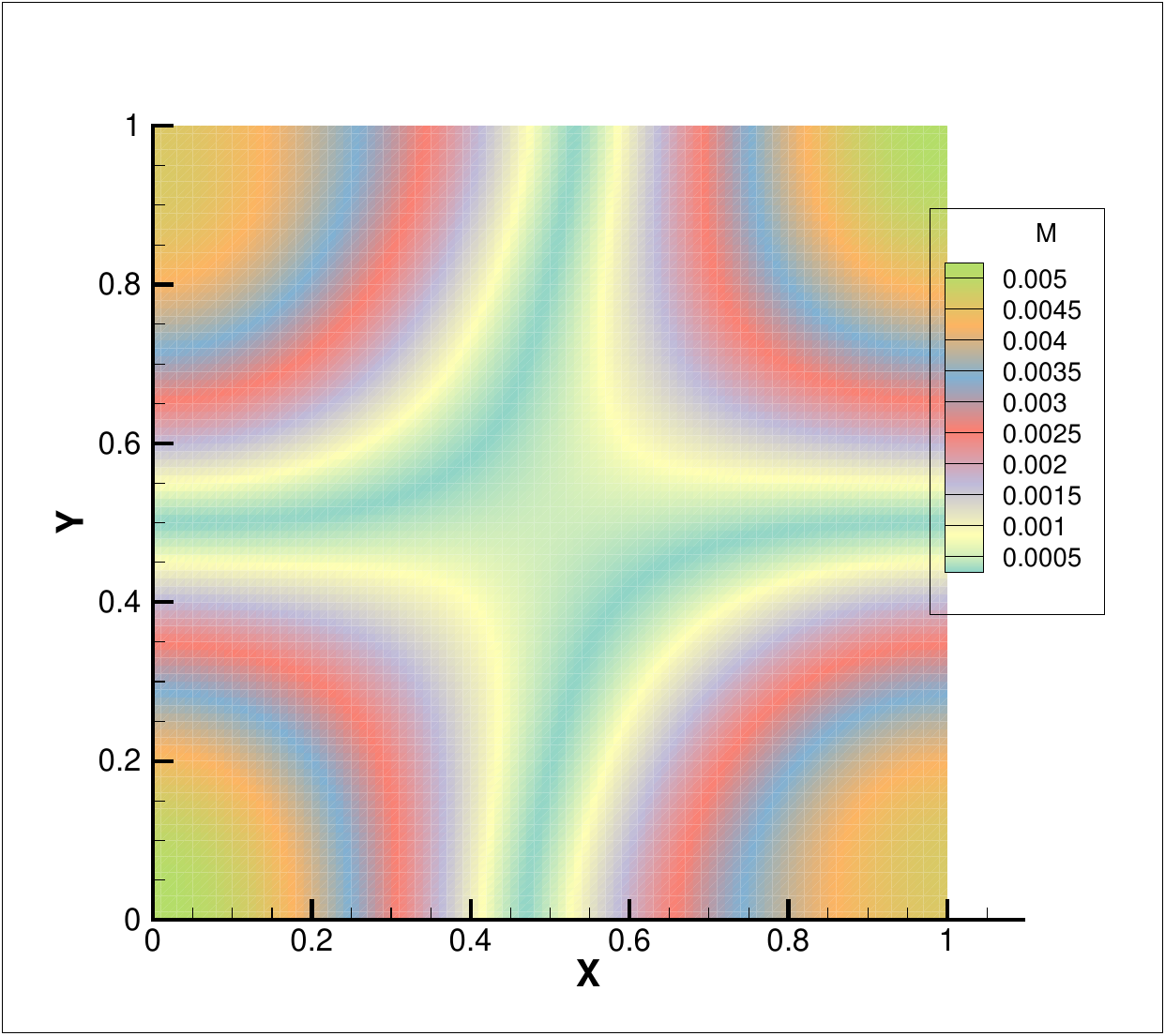}  
	\end{minipage}  
	\begin{minipage}{0.32\textwidth} 
		\centering  
		\includegraphics[width=\textwidth]{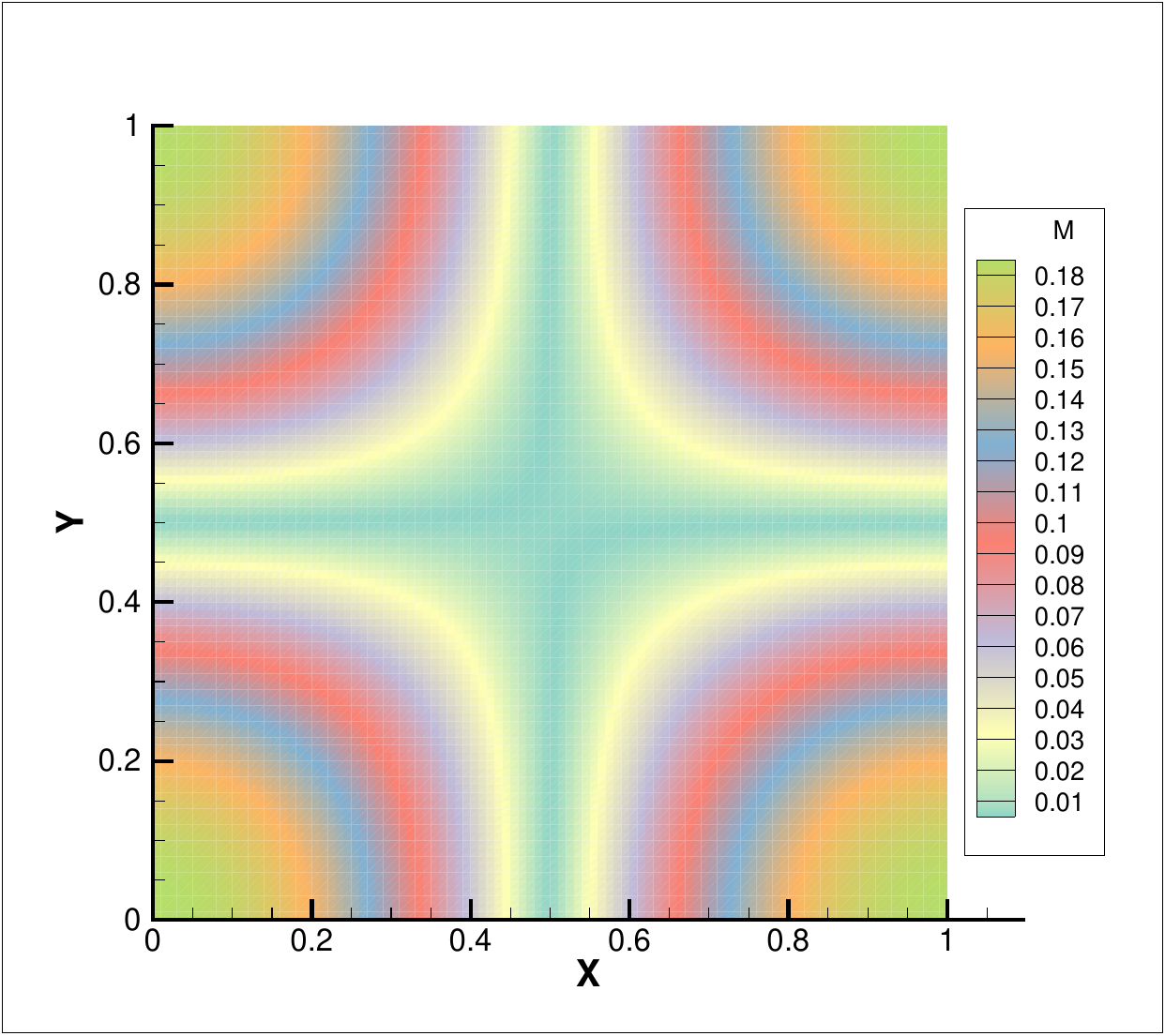} 
	\end{minipage}  
	\begin{minipage}{0.32\textwidth}  
		\centering  
		\includegraphics[width=\textwidth]{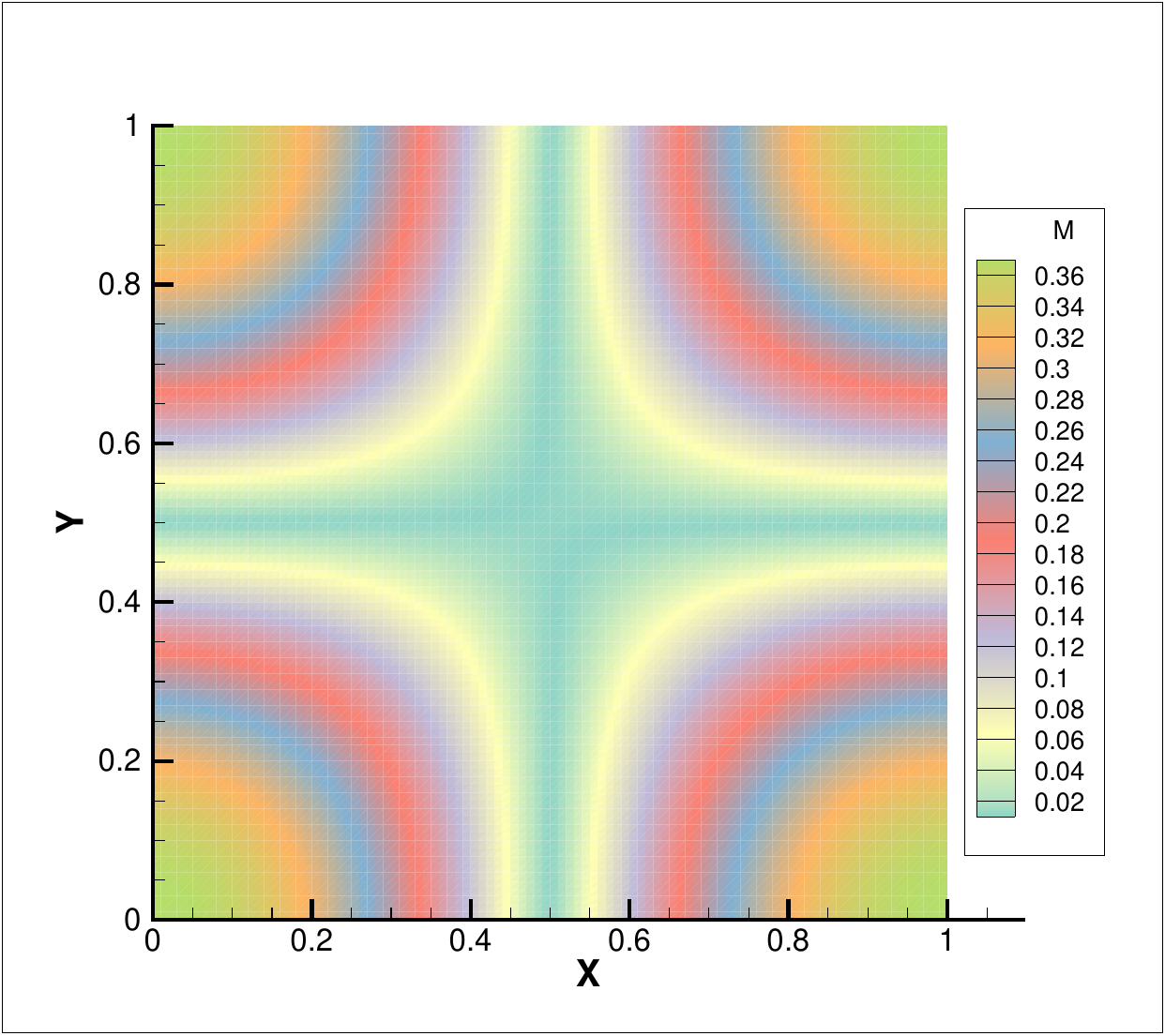}  
	\end{minipage}  
	
	\begin{minipage}{0.32\textwidth} 
		\centering  
		\includegraphics[width=\textwidth]{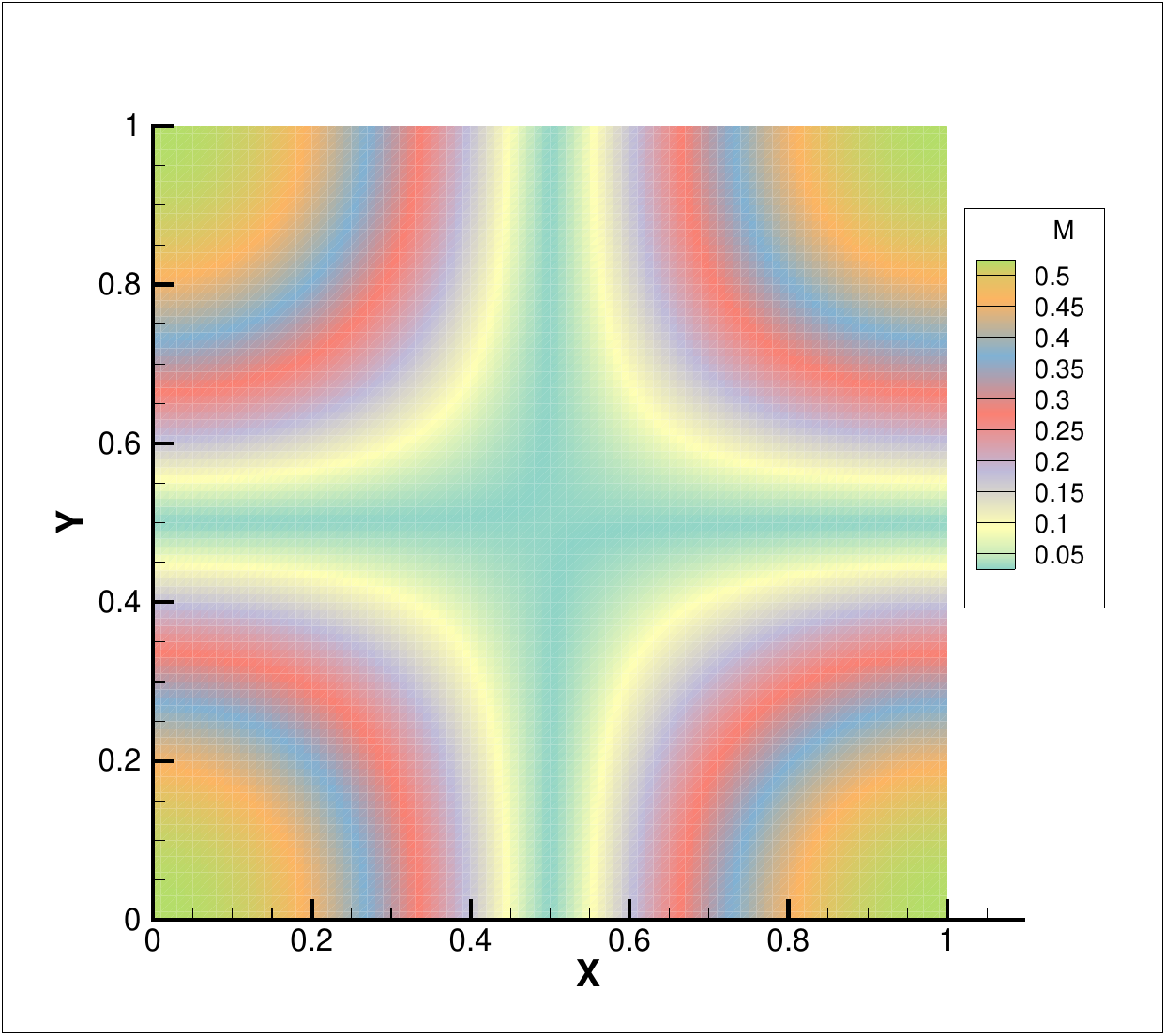}  
	\end{minipage}  
	\begin{minipage}{0.32\textwidth} 
		\centering  
		\includegraphics[width=\textwidth]{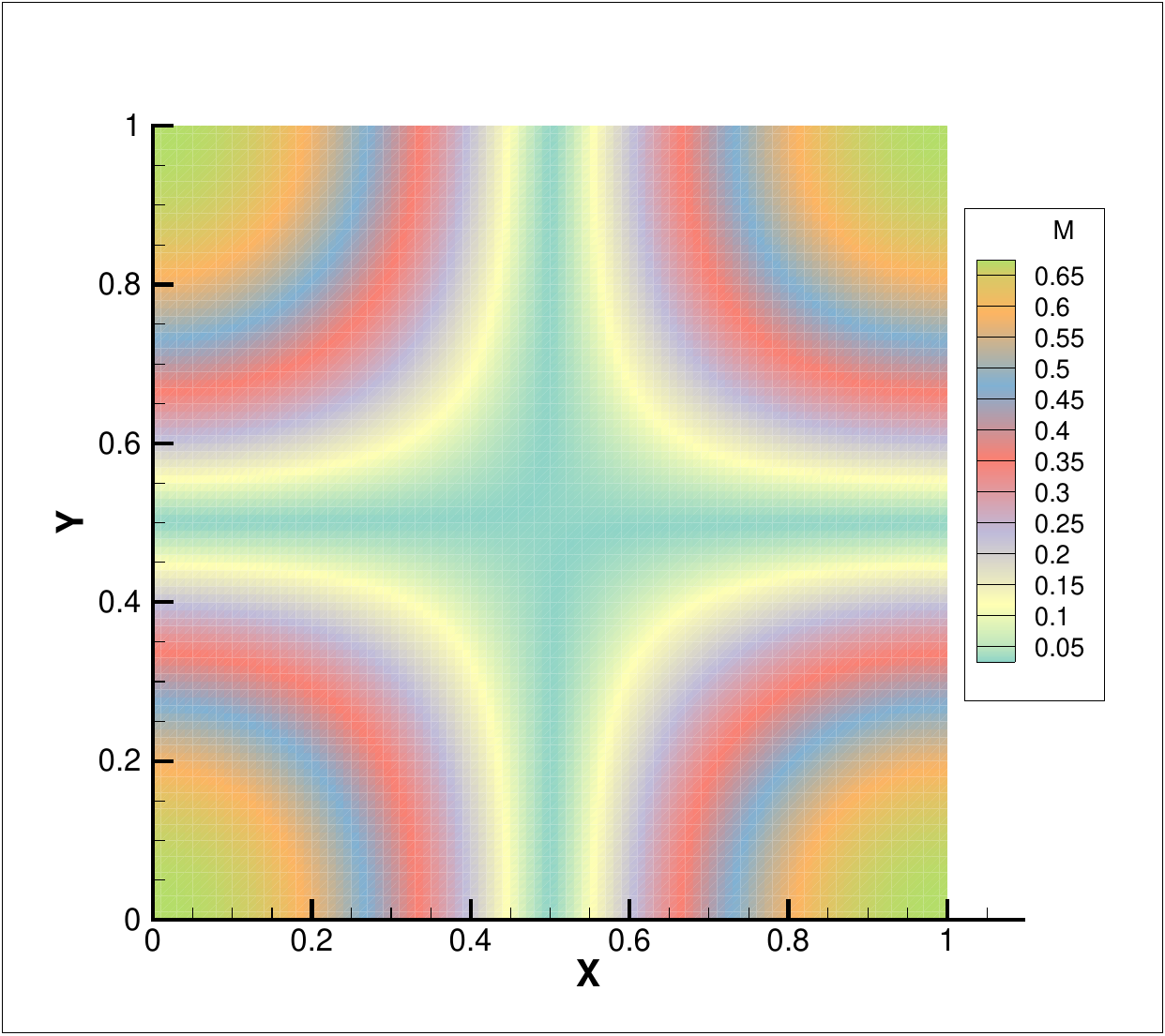} 
	\end{minipage}  
	\begin{minipage}{0.32\textwidth}  
		\centering  
		\includegraphics[width=\textwidth]{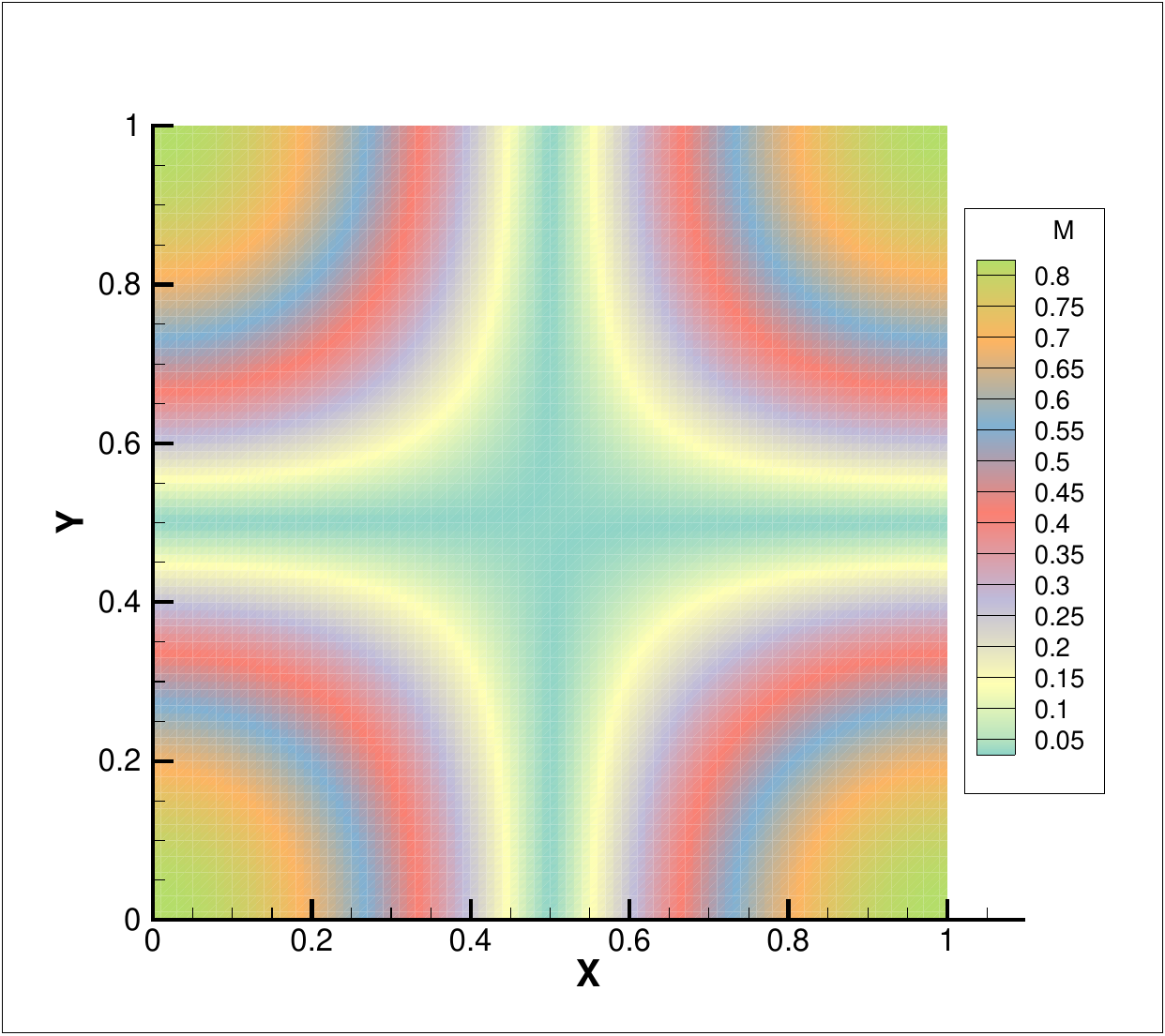}  
	\end{minipage}  
	\caption{Numerical solutions of pressure at times t = 0, 0.2, 0.4, 0.6, 0.8, 1.0.}  
	\label{pressure}  
\end{figure}

\begin{figure}
	\begin{minipage}{0.32\textwidth} 
	\centering  
	\includegraphics[width=\textwidth]{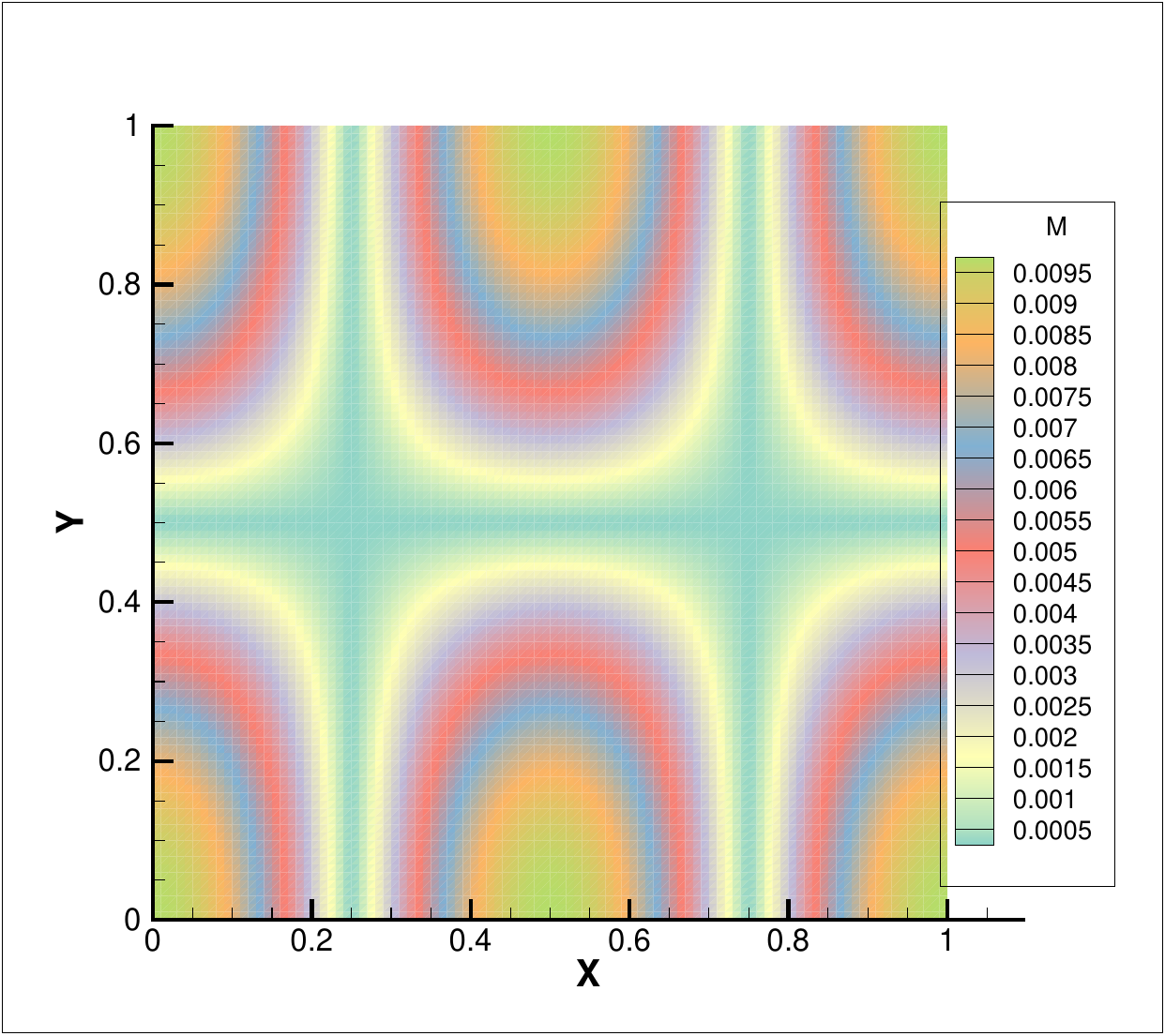}  
\end{minipage}  
\begin{minipage}{0.32\textwidth} 
	\centering  
	\includegraphics[width=\textwidth]{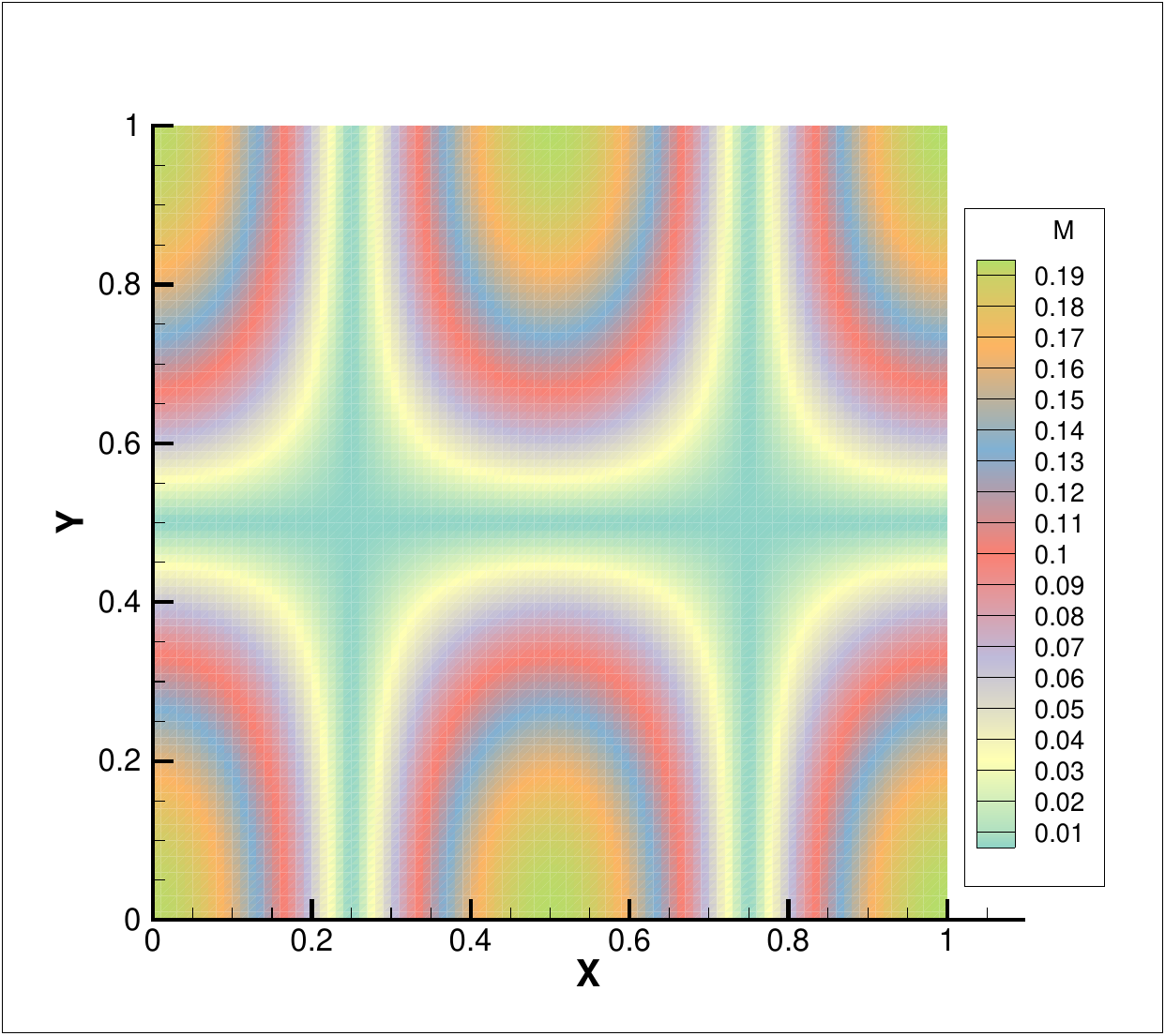} 
\end{minipage}  
\begin{minipage}{0.32\textwidth}  
	\centering  
	\includegraphics[width=\textwidth]{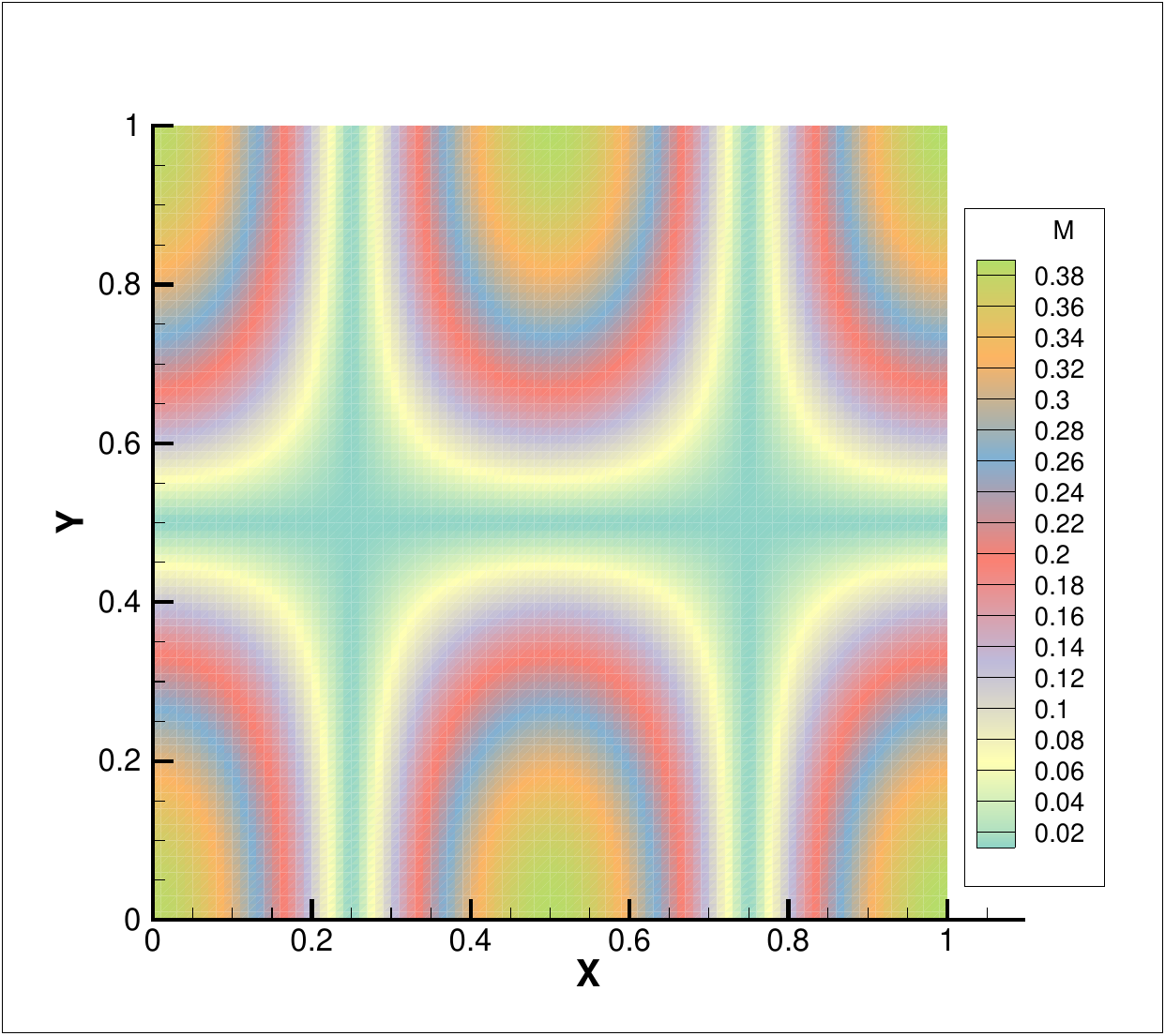}  
\end{minipage}  

\begin{minipage}{0.32\textwidth} 
	\centering  
	\includegraphics[width=\textwidth]{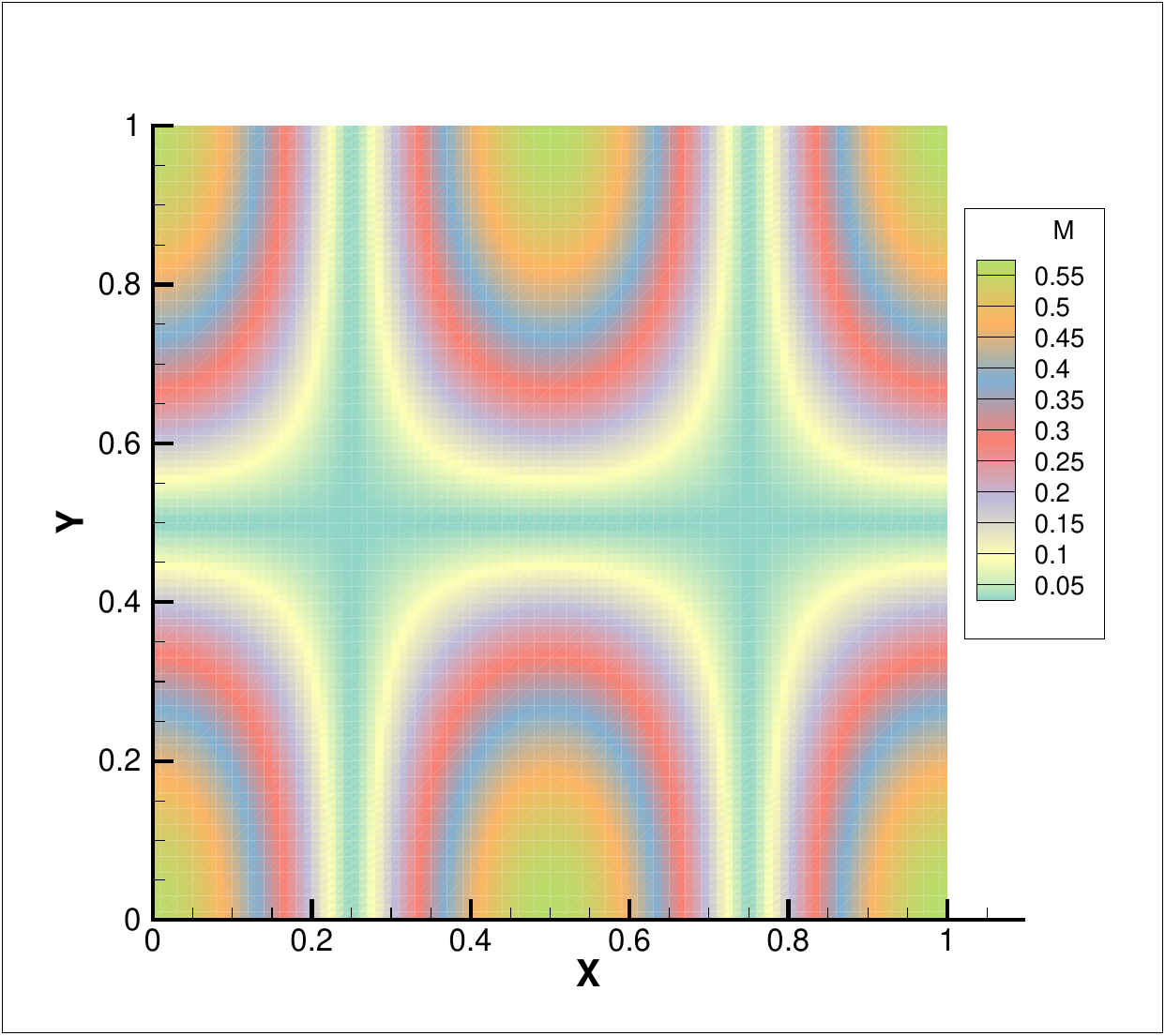}  
\end{minipage}  
\begin{minipage}{0.32\textwidth} 
	\centering  
	\includegraphics[width=\textwidth]{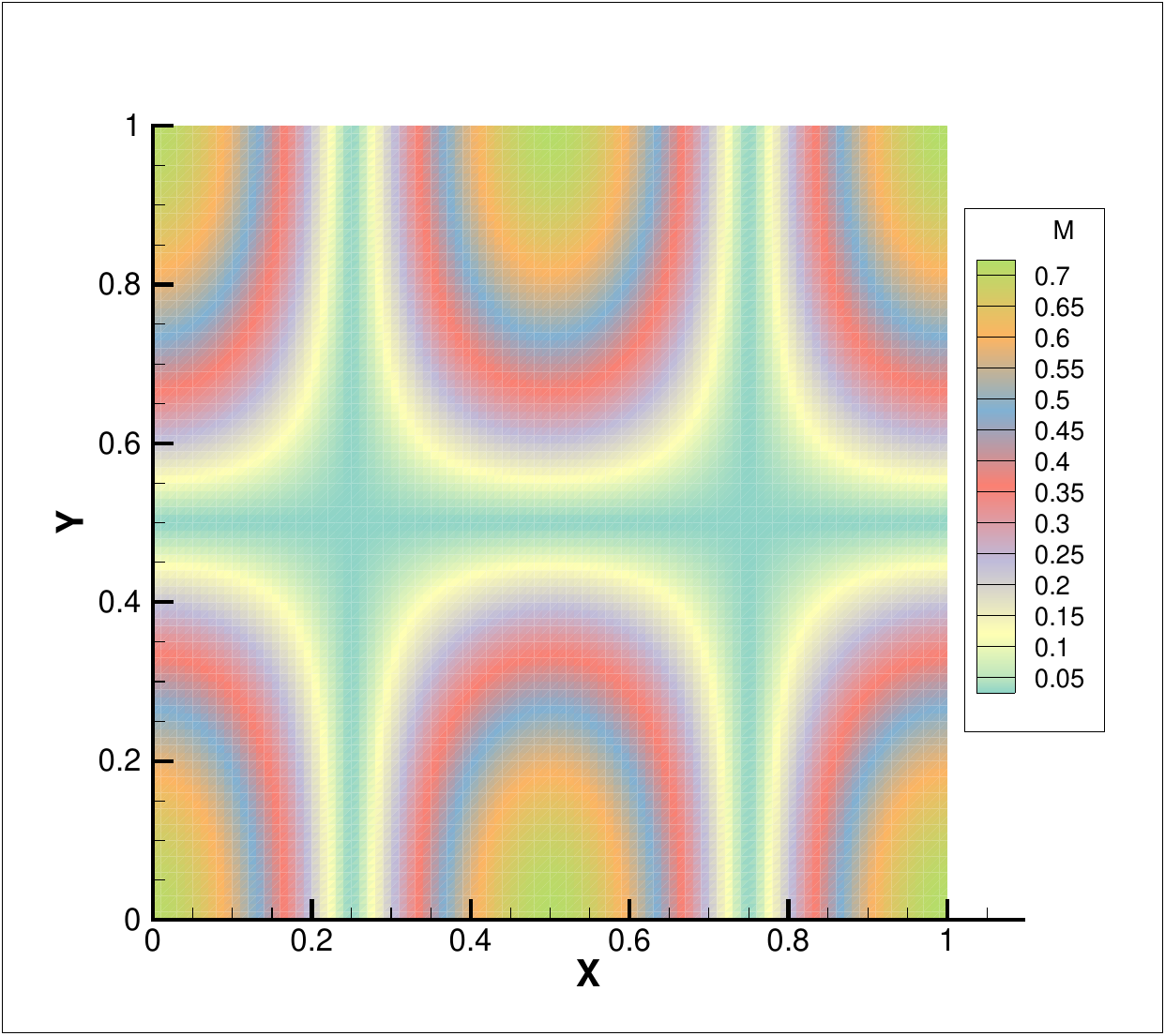} 
\end{minipage}  
\begin{minipage}{0.32\textwidth}  
	\centering  
	\includegraphics[width=\textwidth]{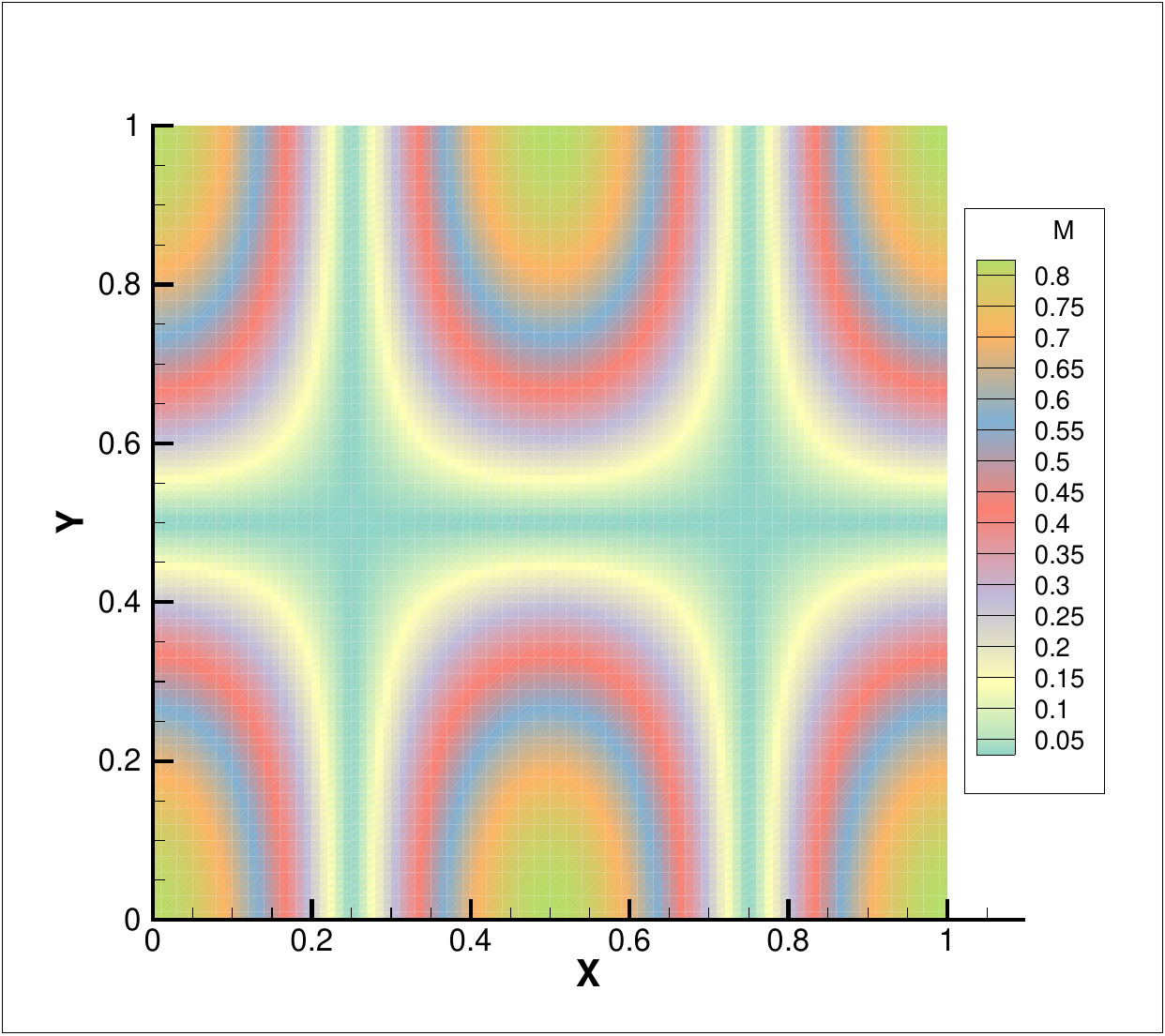}  
\end{minipage}  
\caption{Numerical solutions of  the cell density at times t = 0, 0.2, 0.4, 0.6, 0.8, 1.0.}  
\label{celldensity}  
\end{figure}

\begin{figure}
	\begin{minipage}{0.32\textwidth} 
		\centering  
		\includegraphics[width=\textwidth]{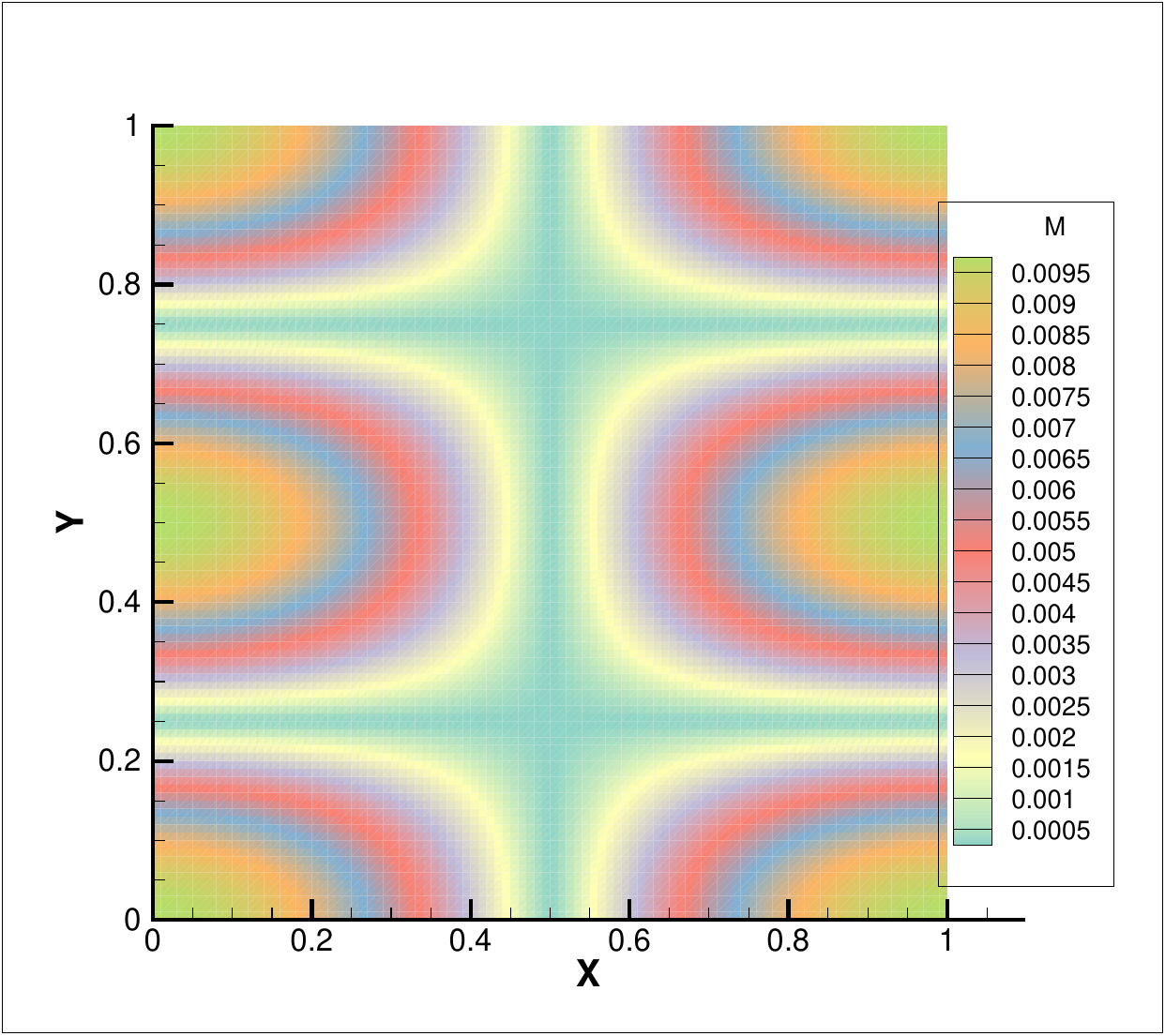}  
	\end{minipage}  
	\begin{minipage}{0.32\textwidth} 
		\centering  
		\includegraphics[width=\textwidth]{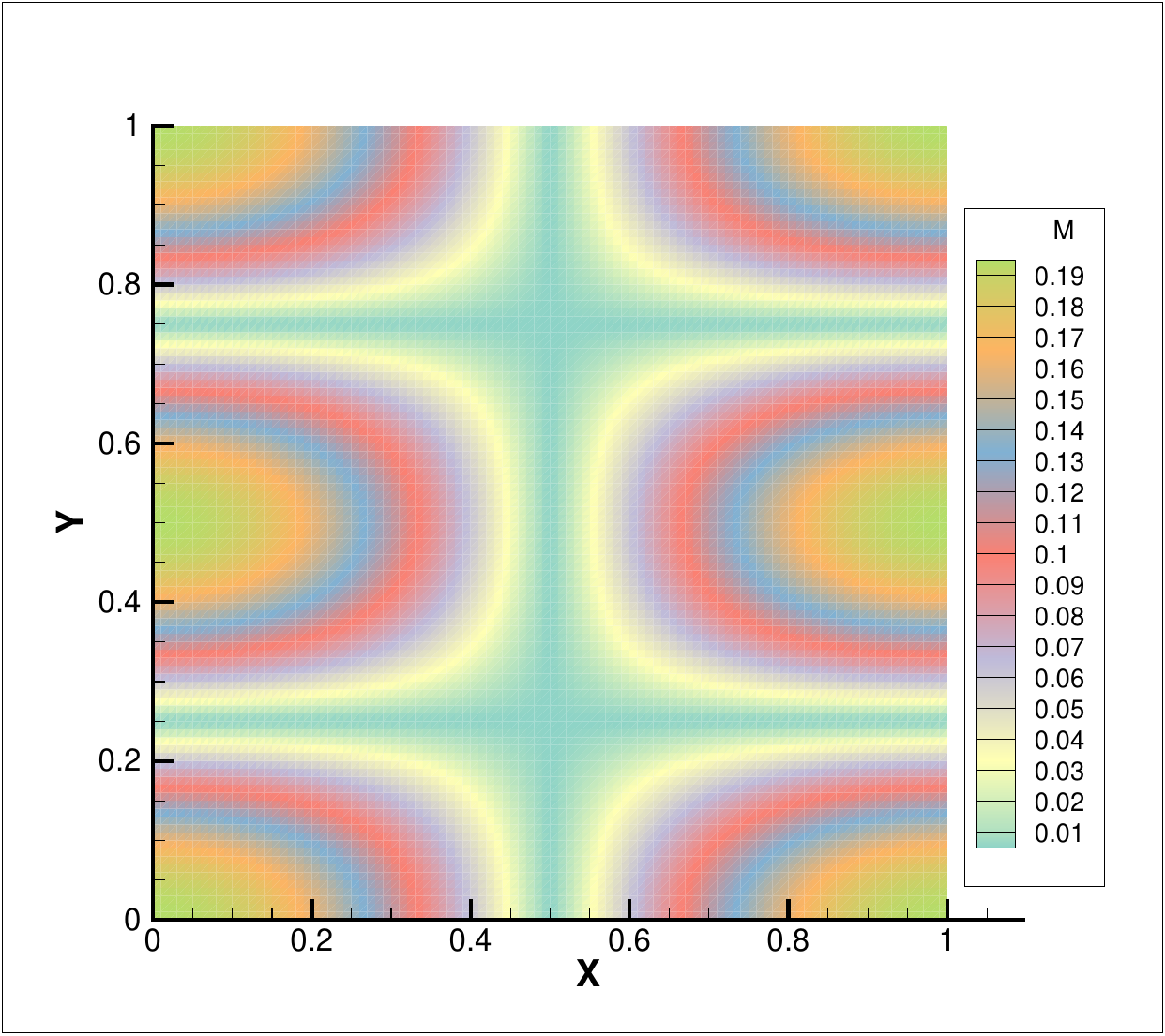} 
	\end{minipage}  
	\begin{minipage}{0.32\textwidth}  
		\centering  
		\includegraphics[width=\textwidth]{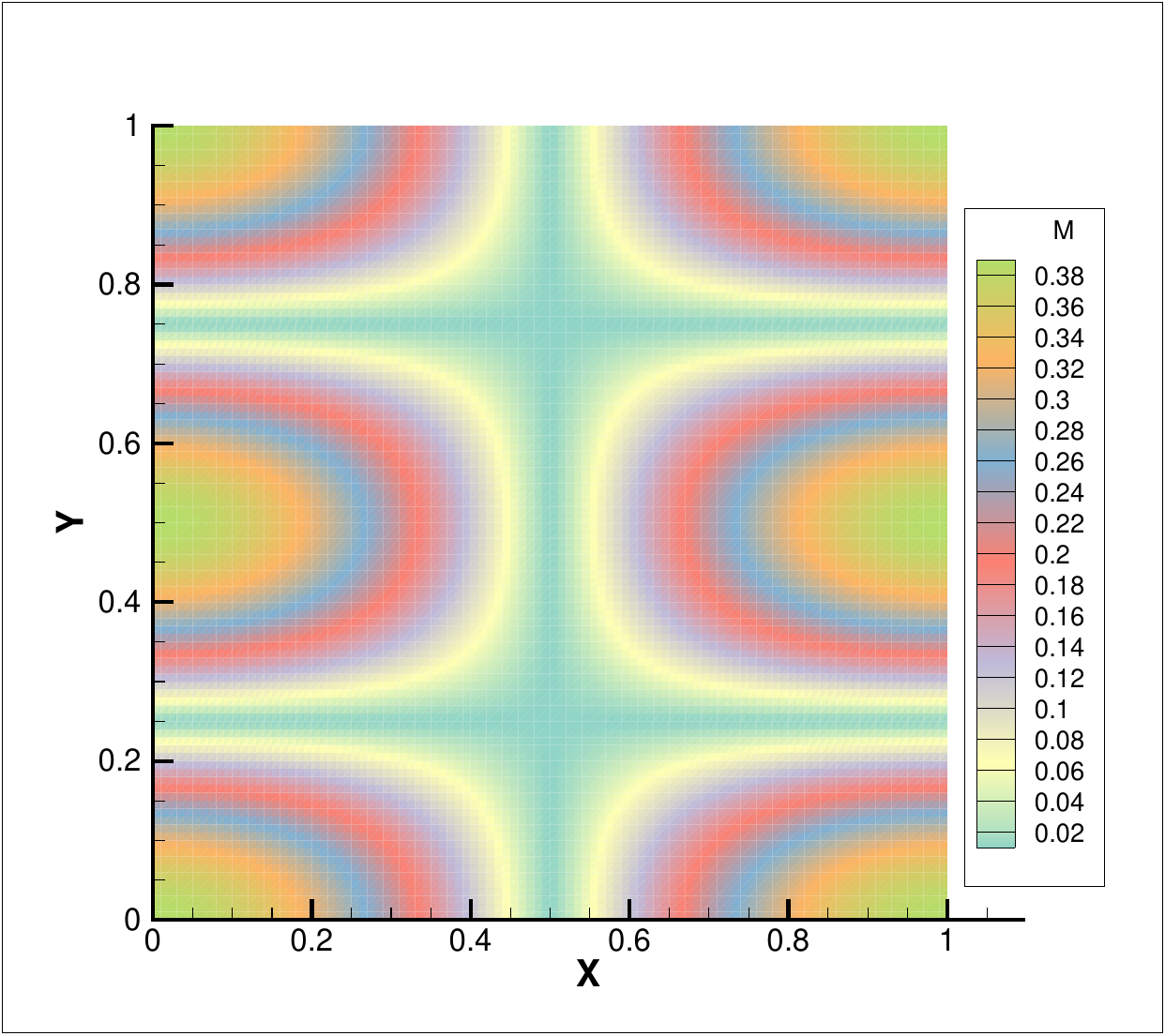}  
	\end{minipage}  
	
	\begin{minipage}{0.32\textwidth} 
		\centering  
		\includegraphics[width=\textwidth]{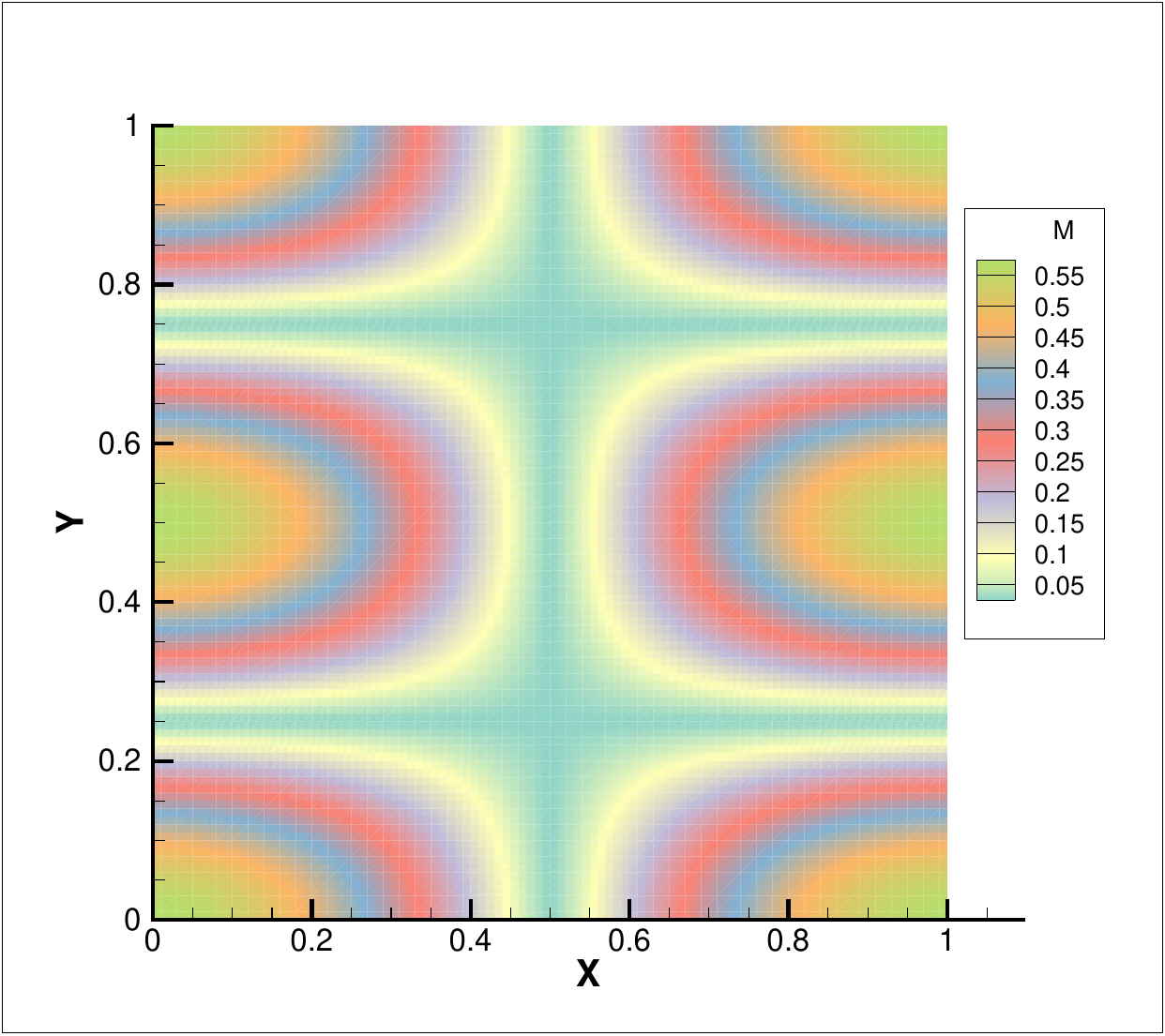}  
	\end{minipage}  
	\begin{minipage}{0.32\textwidth} 
		\centering  
		\includegraphics[width=\textwidth]{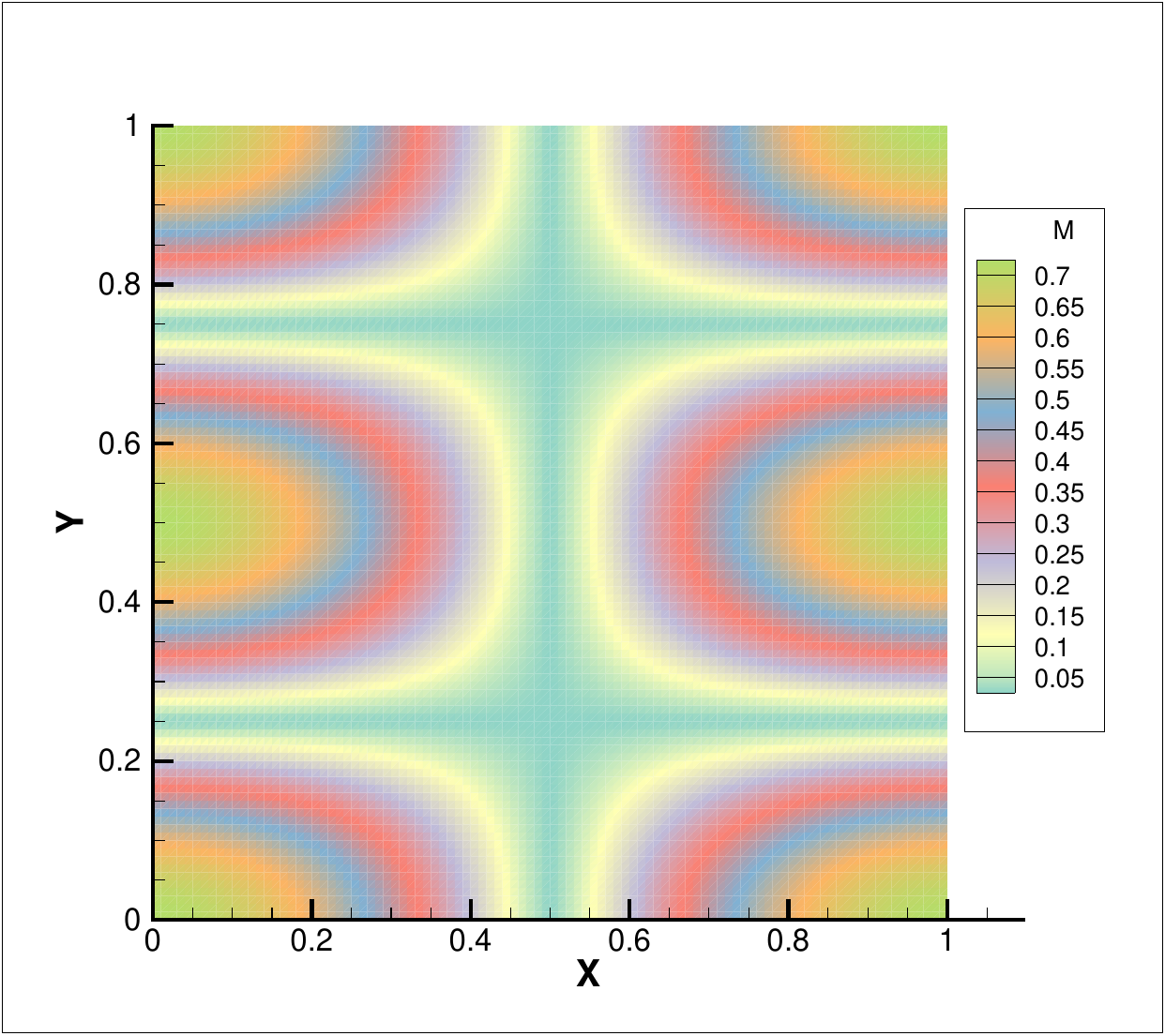} 
	\end{minipage}  
	\begin{minipage}{0.32\textwidth}  
		\centering  
		\includegraphics[width=\textwidth]{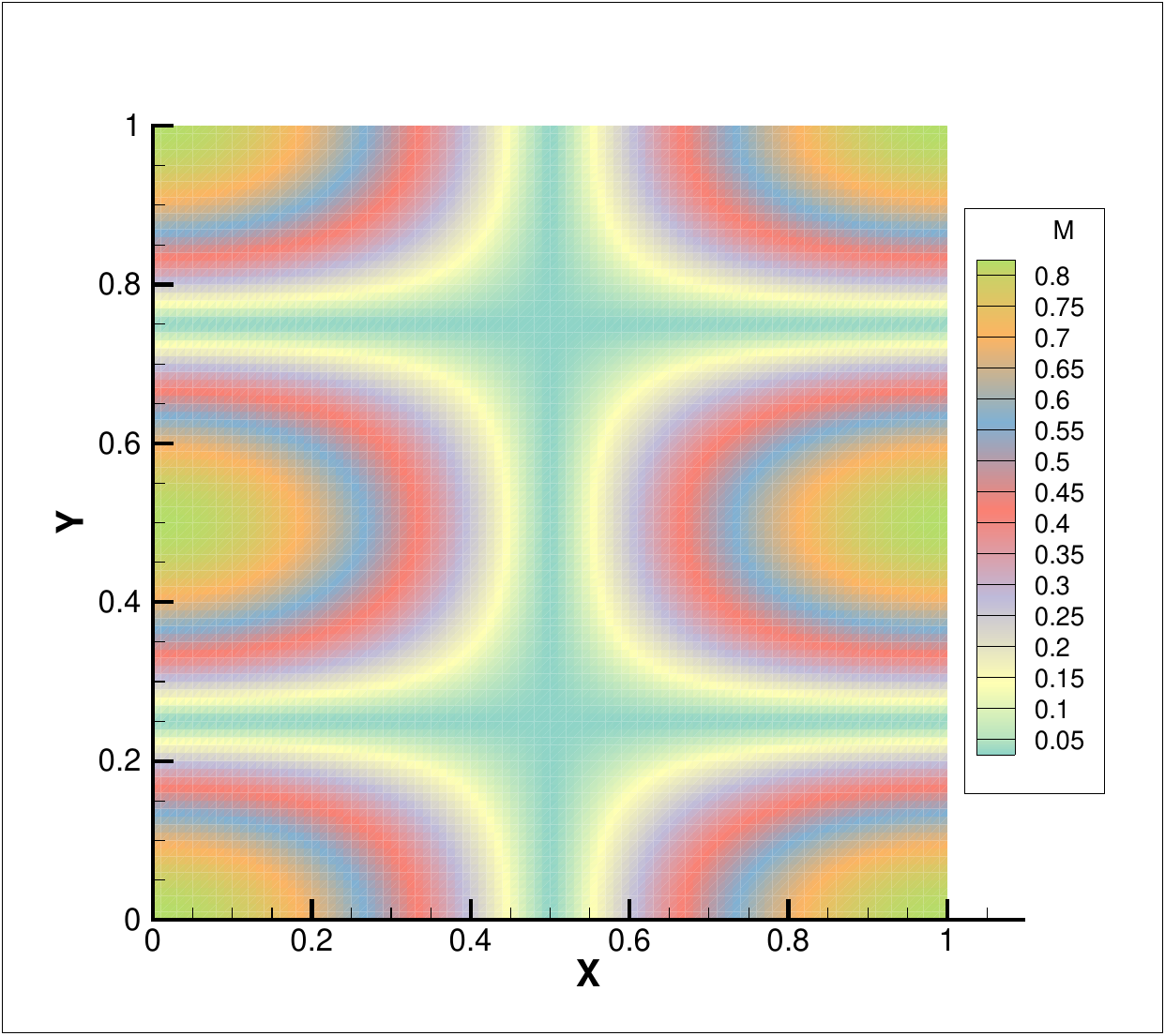}  
	\end{minipage}  
	\caption{Numerical solutions of the concentration of chemical substances at times t = 0, 0.2, 0.4, 0.6, 0.8, 1.0.}  
	\label{concentration}  
\end{figure}

\subsection{Chemotaxis phenomena in a liquid environment}
In this numerical experiment, we consider a rectangular domain $\Omega = [0,2] \times [0,1]$, with the following initial conditions:
\begin{align*}
	\eta_0(x,y) &= \sum_{i=1}^{3} \left( 70 \exp \left( -8(x - s_i)^2 - 10(y - 1)^2 \right) \right),\\
	c_0(x,y) &= 30 \exp(-5(x - 1)^2 - 5(y - 0.5)^2),\\
	\u_0(x,y) &= \mathbf{0},
\end{align*}
where $s_1=0.2$, $s_2=0.5$, and $s_3=1.2$.

The numerical solution is computed using a mesh size parameter of $h = \frac{1}{100}$ and a time step of $\tau = 10^{-5}$. The model parameters are chosen as follows: $\beta = 8$, $a_\eta = 4$, $\gamma = 6$, $\nu = 10$, $a_c = 1$, and an external potential given by $\phi(x,y) = -1000y$.

We present simulation results at various times $t= 10^{-5}, 2 \times 10^{-4}, 5 \times10^{-4},10^{-3},2 \times10^{-3},5 \times10^{-3}$, highlighting the dynamic evolution of the system. The distributions of cell density and chemical concentration are shown in Figure \ref{chemo-celldensity}, while The velocity field evolution is illustrated in Figure \ref{chemo-velocity}.

Initially, the cells migrate towards the center of the domain, where the chemical attractant exhibits the highest concentration. As time progresses, the chemical is gradually consumed, leading to a reduction in its concentration. Consequently, the cells begin to diffuse across the domain. Eventually, influenced by the force of gravity, the cells are observed to accumulate near the bottom boundary of the domain. This behavior is qualitatively consistent with the experimental and numerical observations reported in \cite{duarte2021}.

\begin{figure}[htbp] 
	\centering 
	\begin{minipage}{0.32\textwidth} 
		\centering  
		\includegraphics[width=\textwidth]{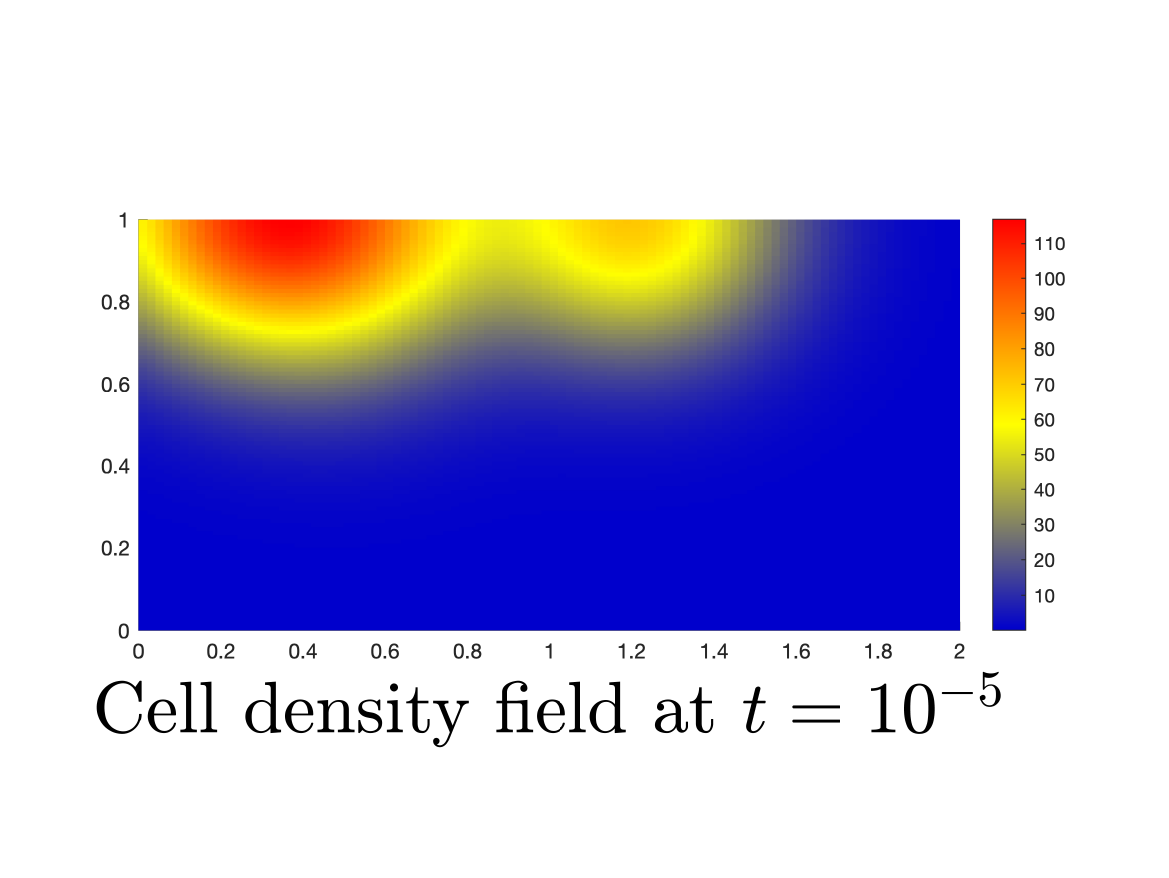}  
	\end{minipage}  
	\begin{minipage}{0.32\textwidth} 
		\centering  
		\includegraphics[width=\textwidth]{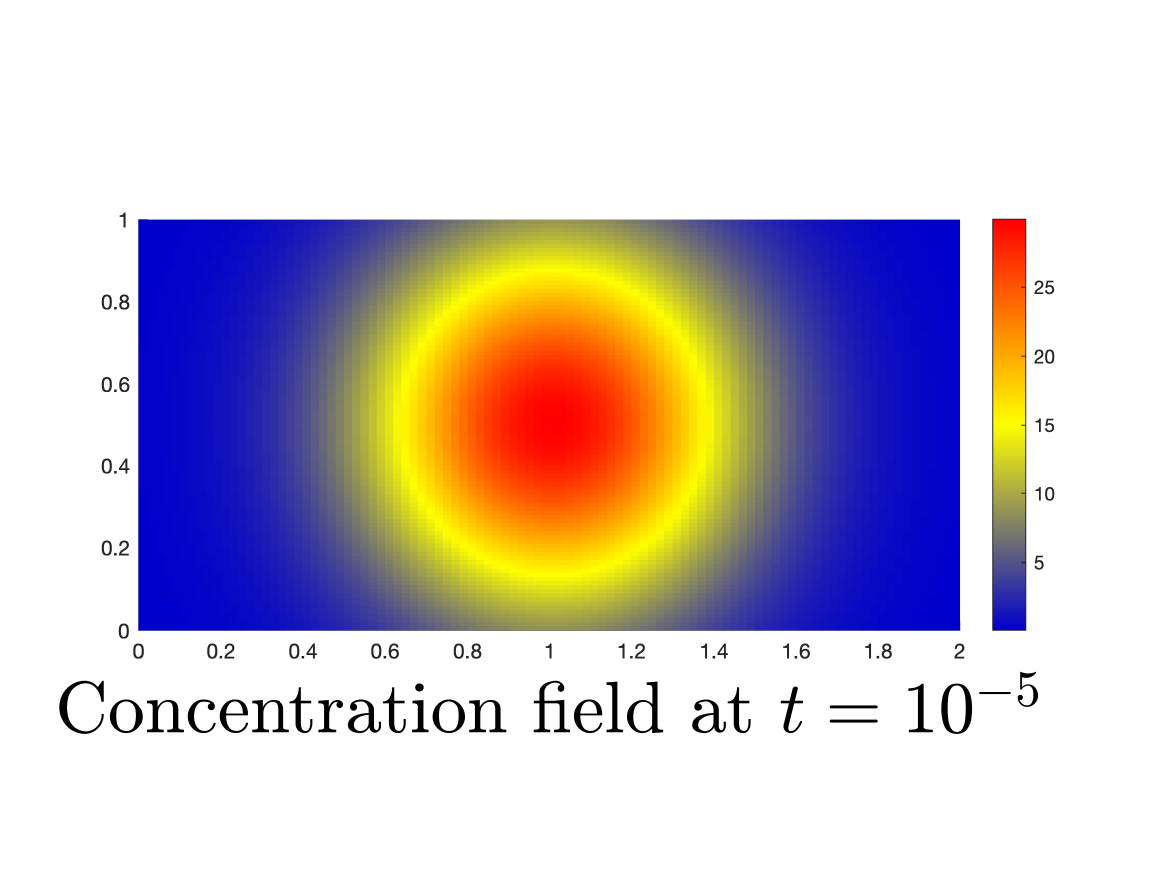} 
	\end{minipage}  
	\qquad
	\begin{minipage}{0.32\textwidth}  
		\centering  
		\includegraphics[width=\textwidth]{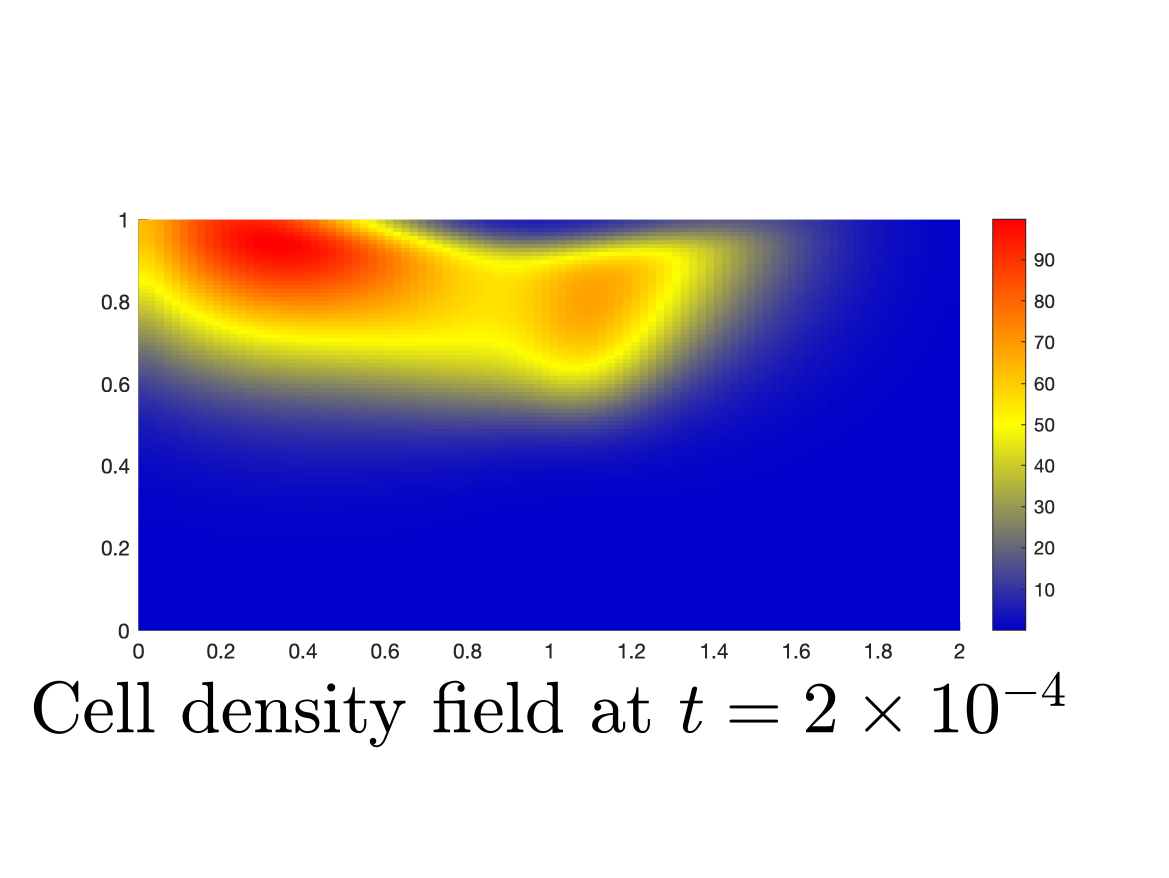}  
	\end{minipage}  	\begin{minipage}{0.32\textwidth} 
		\centering  
		\includegraphics[width=\textwidth]{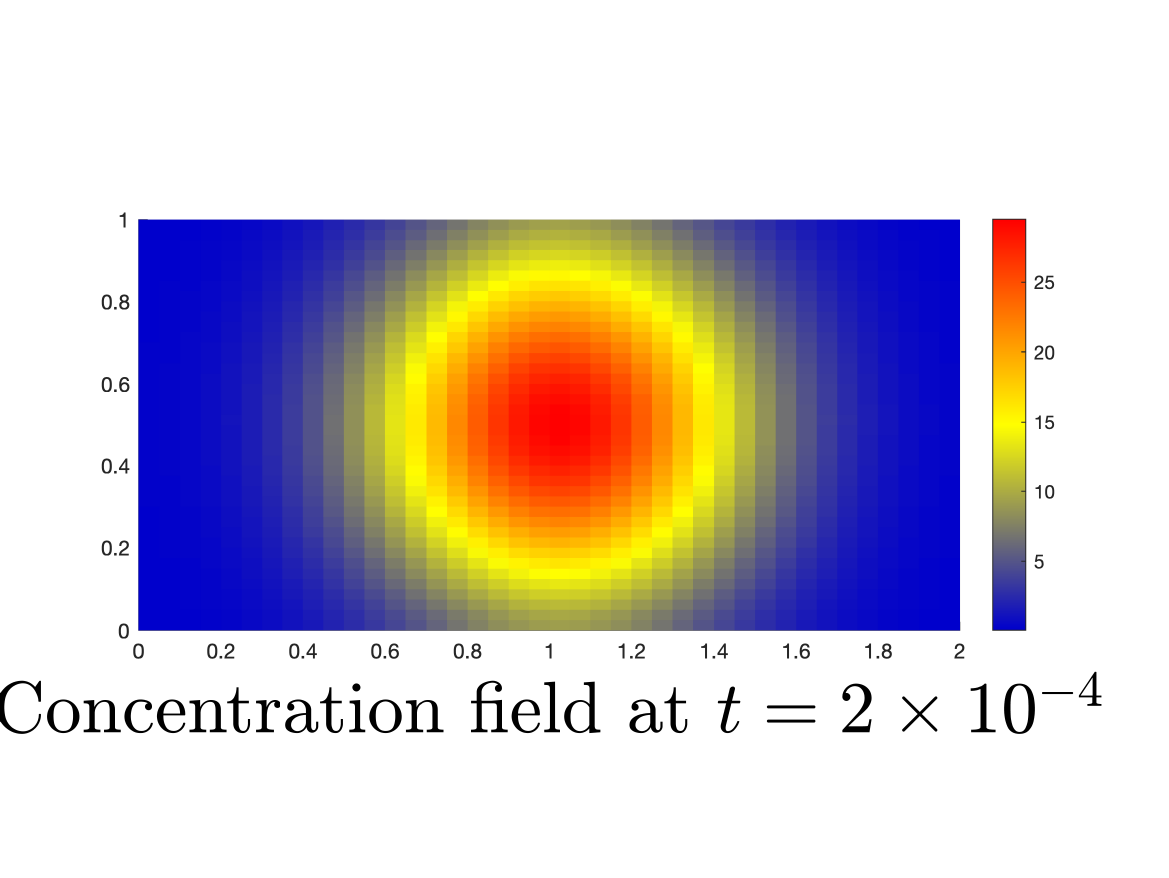}  
	\end{minipage}  
	\qquad
	\begin{minipage}{0.32\textwidth} 
		\centering  
		\includegraphics[width=\textwidth]{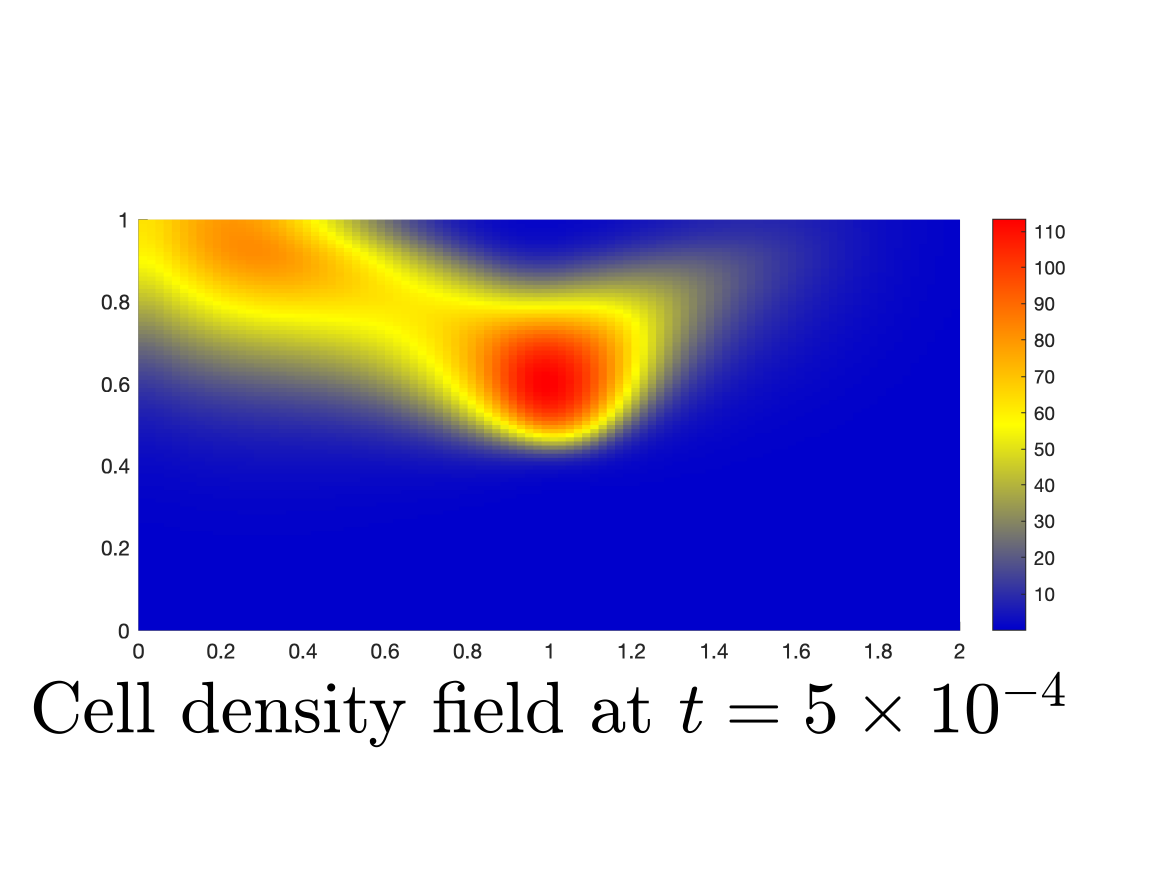} 
	\end{minipage}  
	\begin{minipage}{0.32\textwidth}  
		\centering  
		\includegraphics[width=\textwidth]{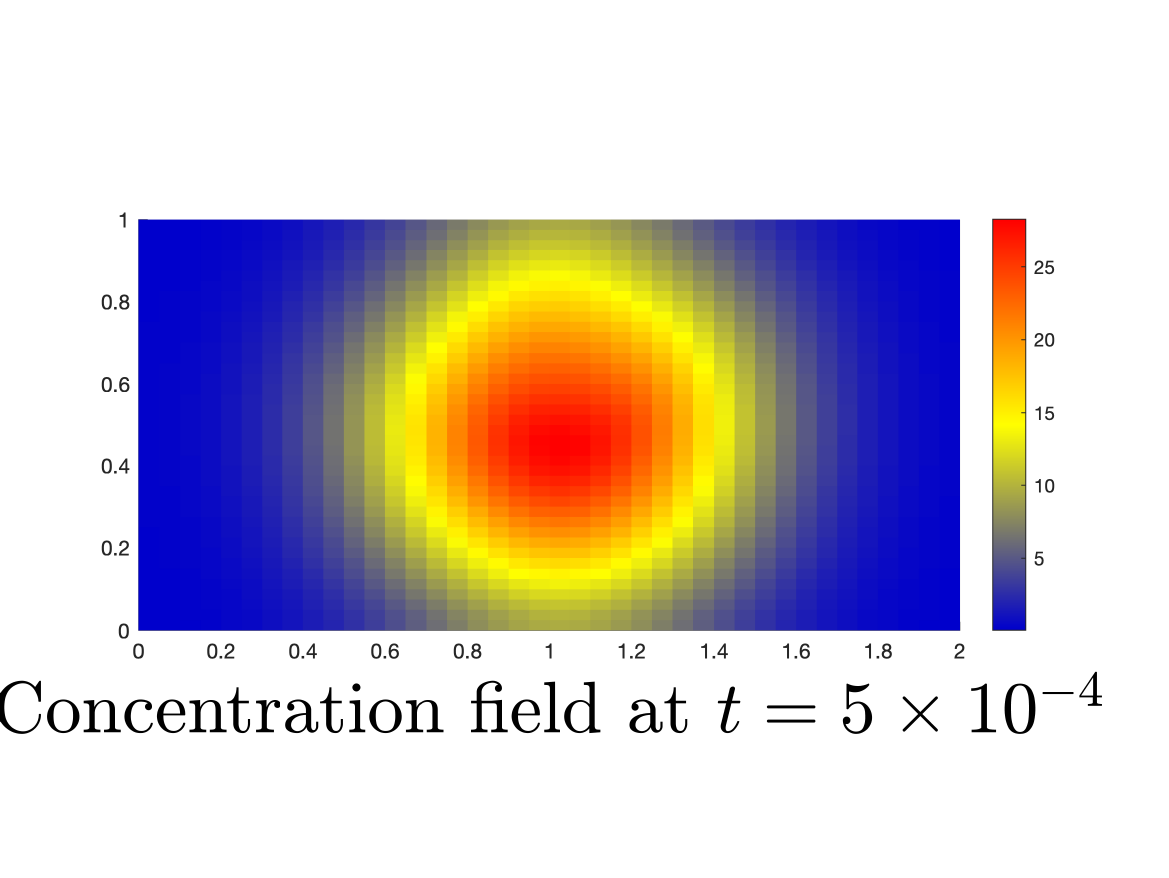}  
	\end{minipage}  
	\qquad
		\begin{minipage}{0.32\textwidth} 
		\centering  
		\includegraphics[width=\textwidth]{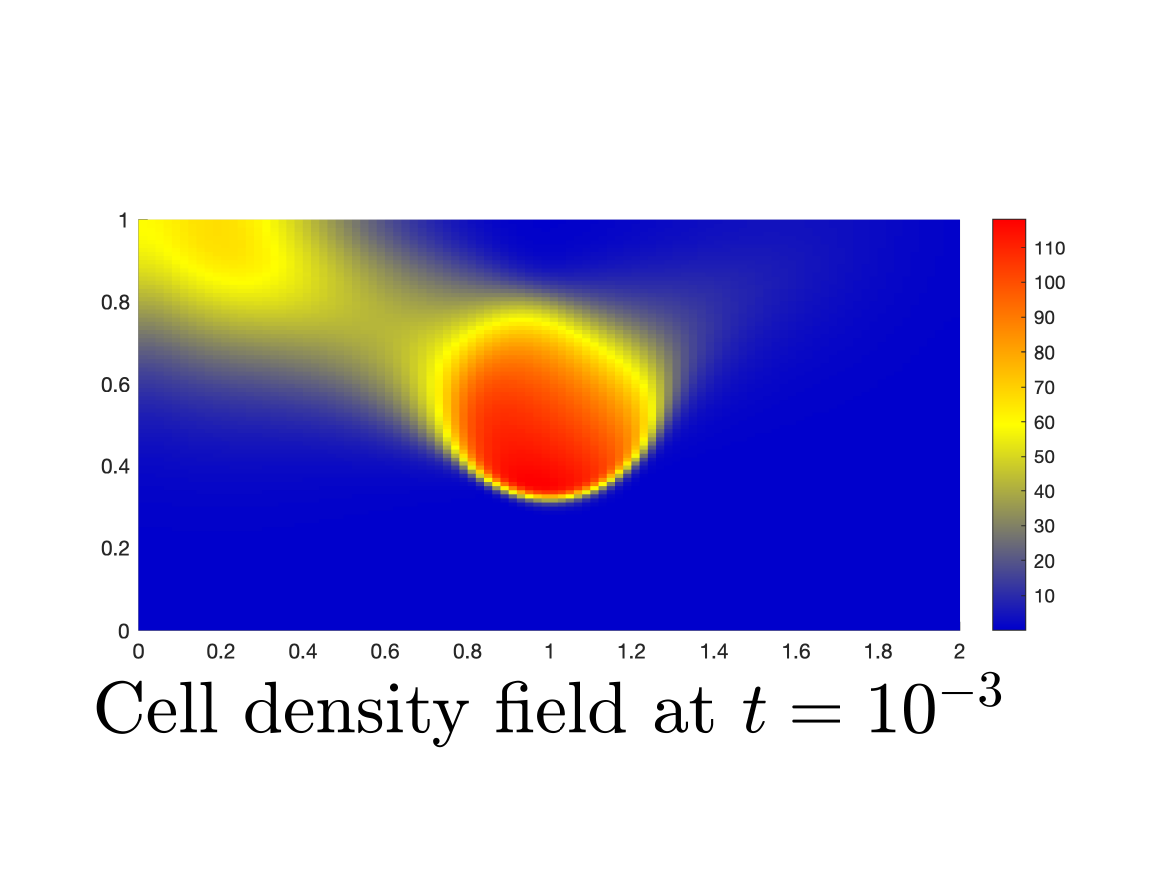} 
	\end{minipage}  
	\begin{minipage}{0.32\textwidth}  
		\centering  
		\includegraphics[width=\textwidth]{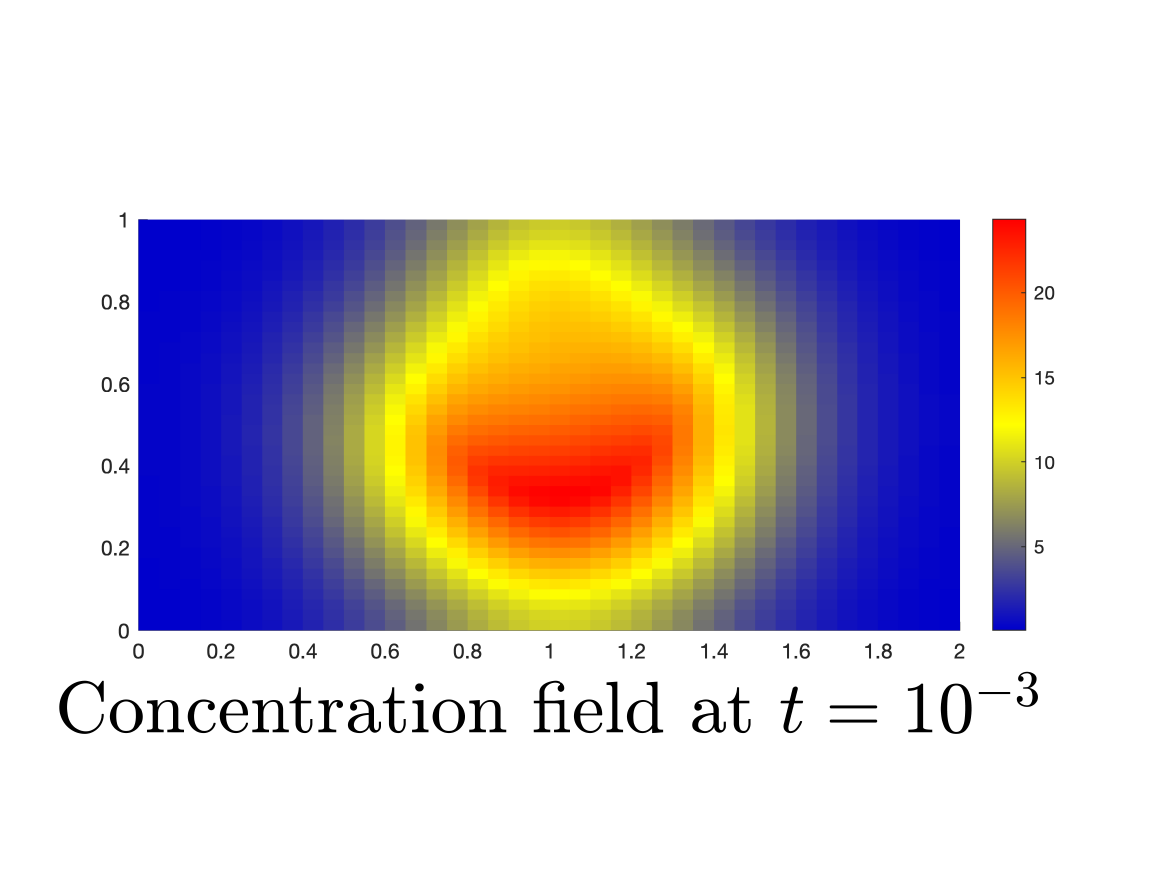}  
	\end{minipage}  
	\qquad
		\begin{minipage}{0.32\textwidth} 
		\centering  
		\includegraphics[width=\textwidth]{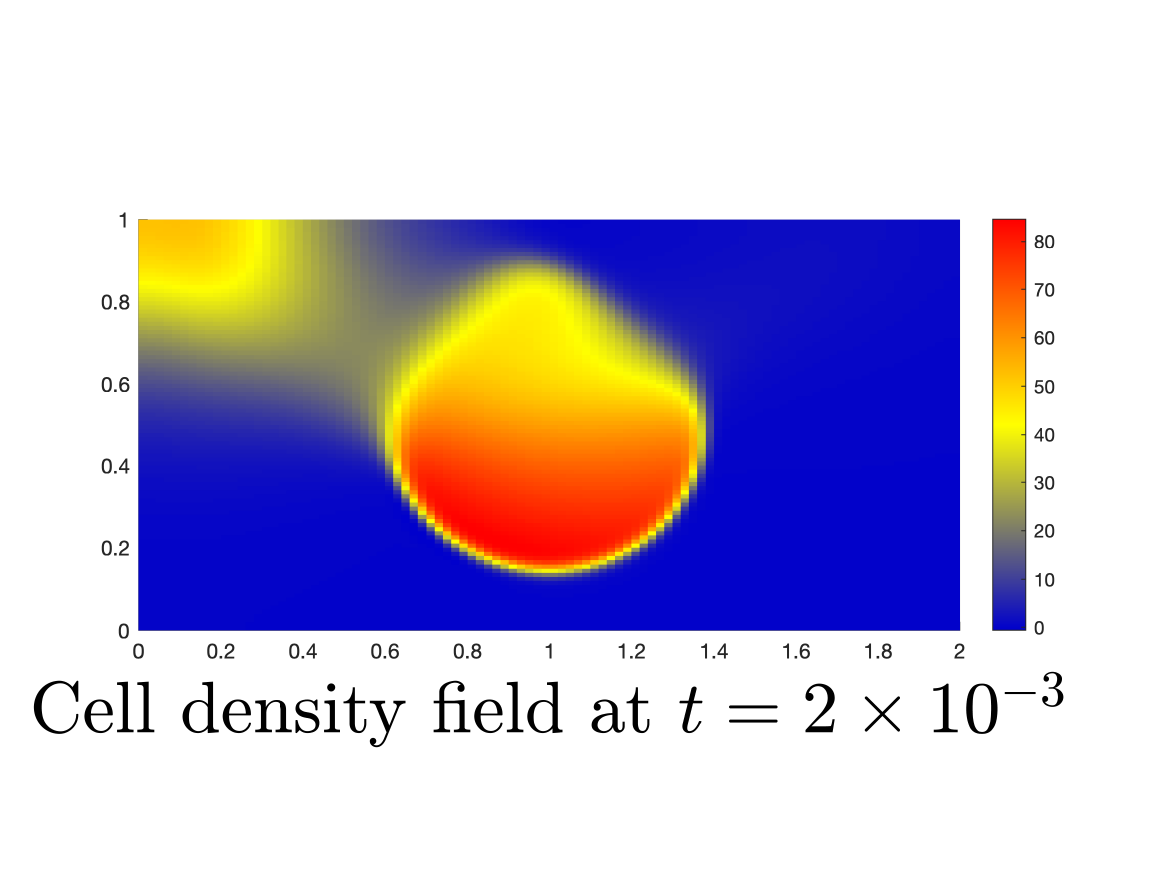} 
	\end{minipage}  
	\begin{minipage}{0.32\textwidth}  
		\centering  
		\includegraphics[width=\textwidth]{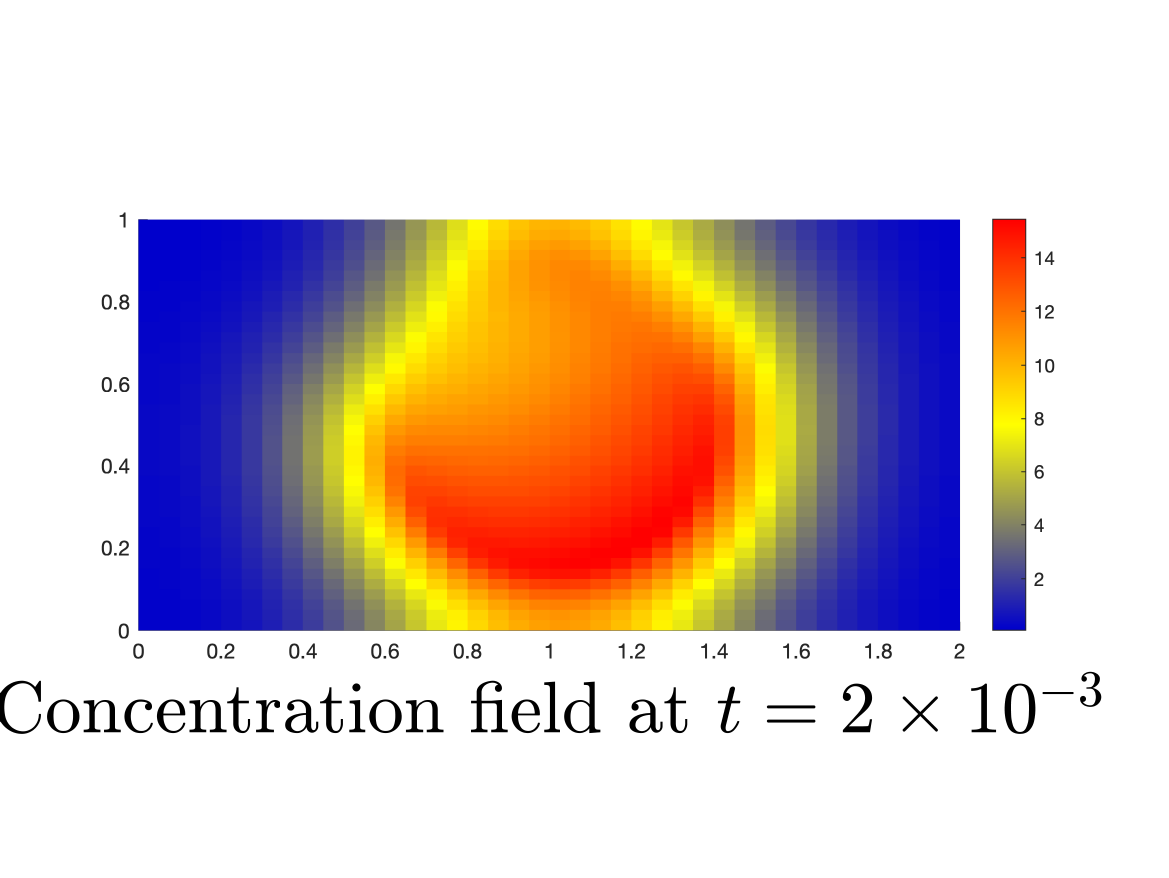}  
	\end{minipage}  
	\qquad
		\begin{minipage}{0.32\textwidth} 
		\centering  
		\includegraphics[width=\textwidth]{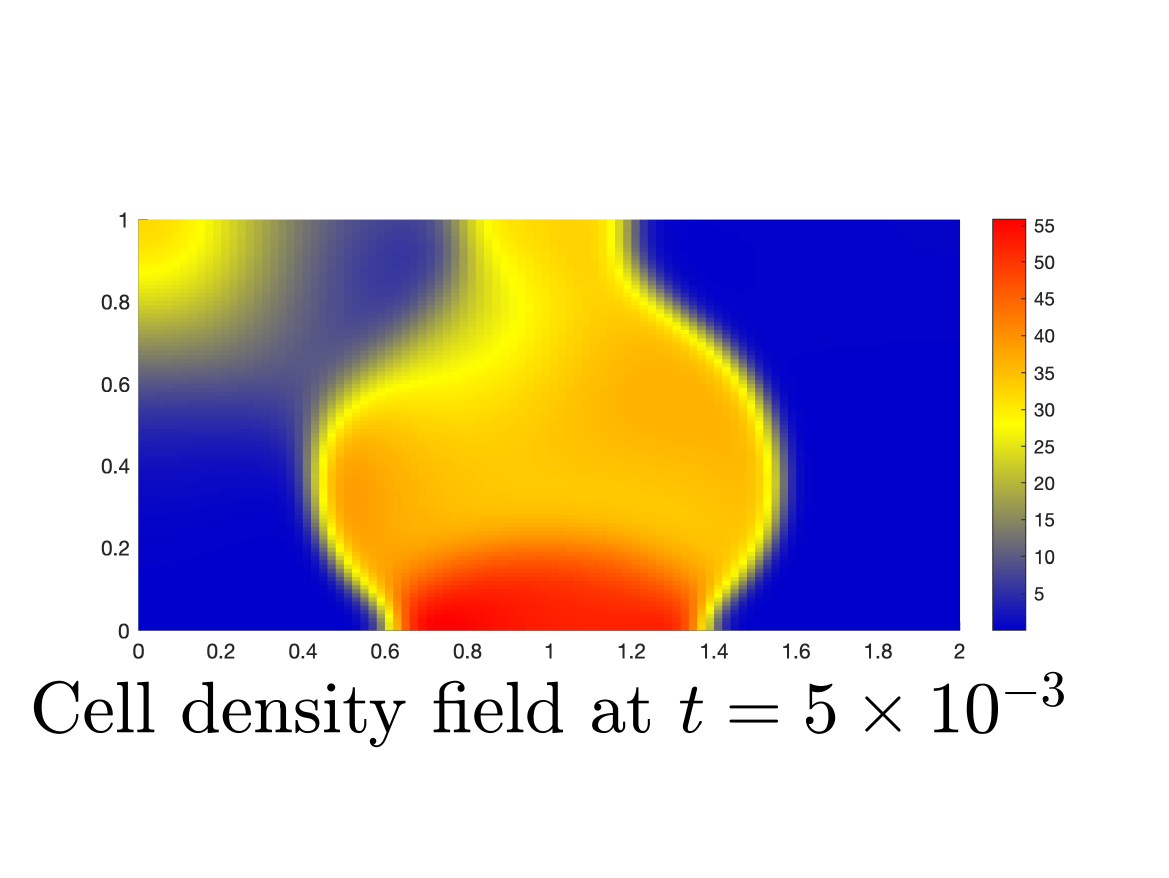} 
	\end{minipage}  
	\begin{minipage}{0.32\textwidth}  
		\centering  
		\includegraphics[width=\textwidth]{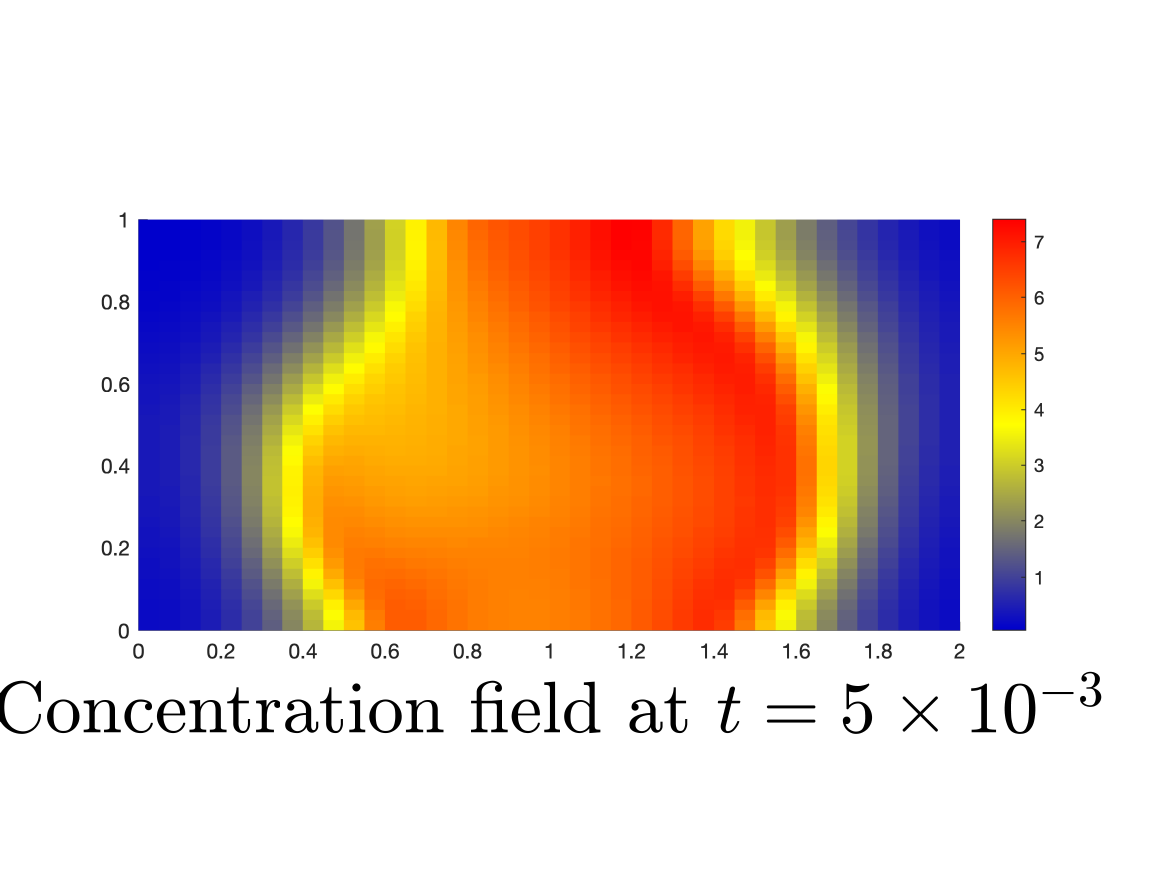}  
	\end{minipage}  
	\caption{Cell density vs. chemical concentration at $t= 10^{-5}, 2 \times 10^{-4}, 5 \times10^{-4},10^{-3},2 \times10^{-3},5 \times10^{-3}$.}  
	\label{chemo-celldensity}  
\end{figure}

\begin{figure}[htbp] 
	\centering 
	\begin{minipage}{0.45\textwidth} 
		\centering  
		\includegraphics[width=\textwidth]{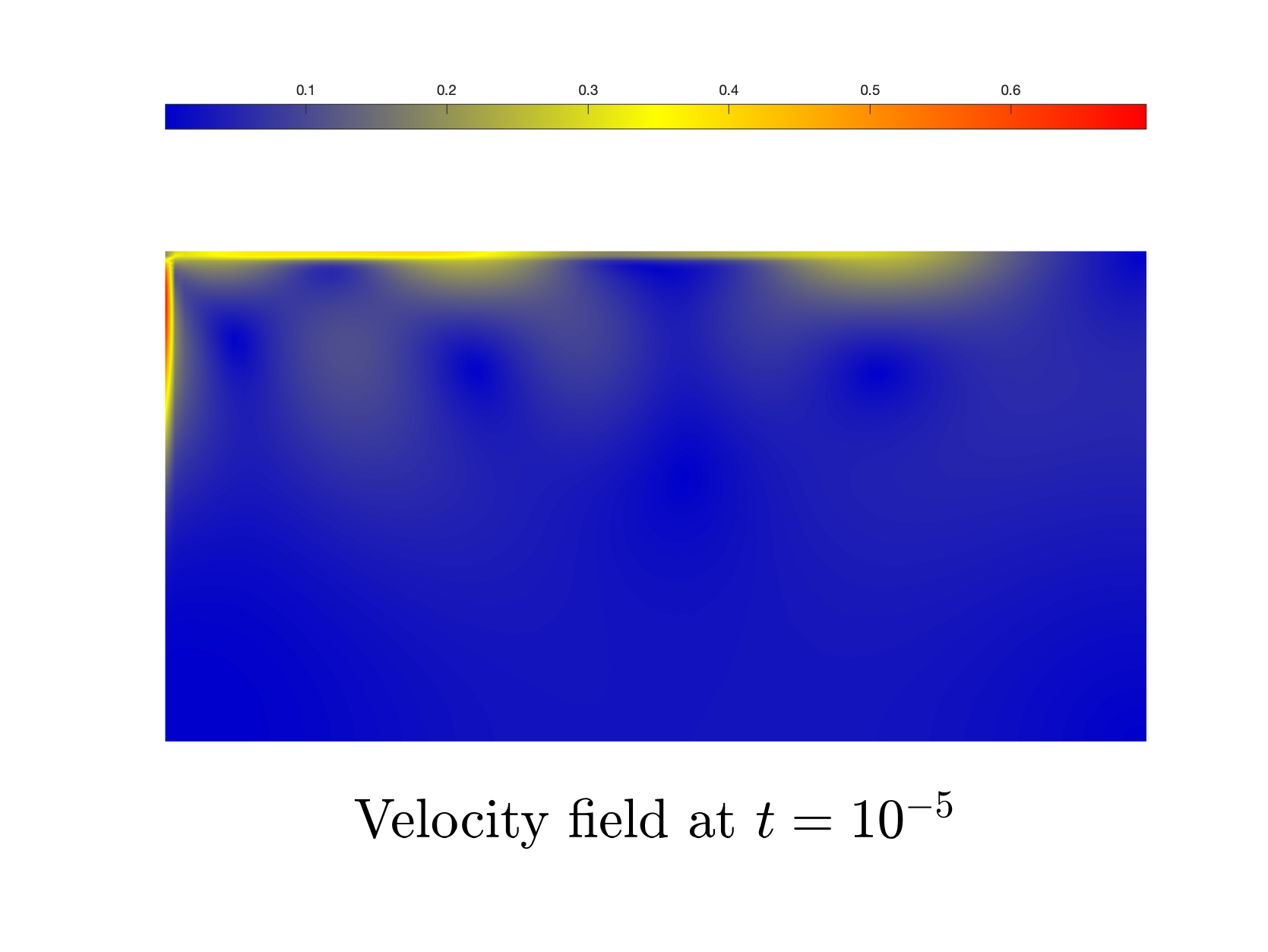}  
	\end{minipage}  
	\begin{minipage}{0.45\textwidth} 
		\centering  
		\includegraphics[width=\textwidth]{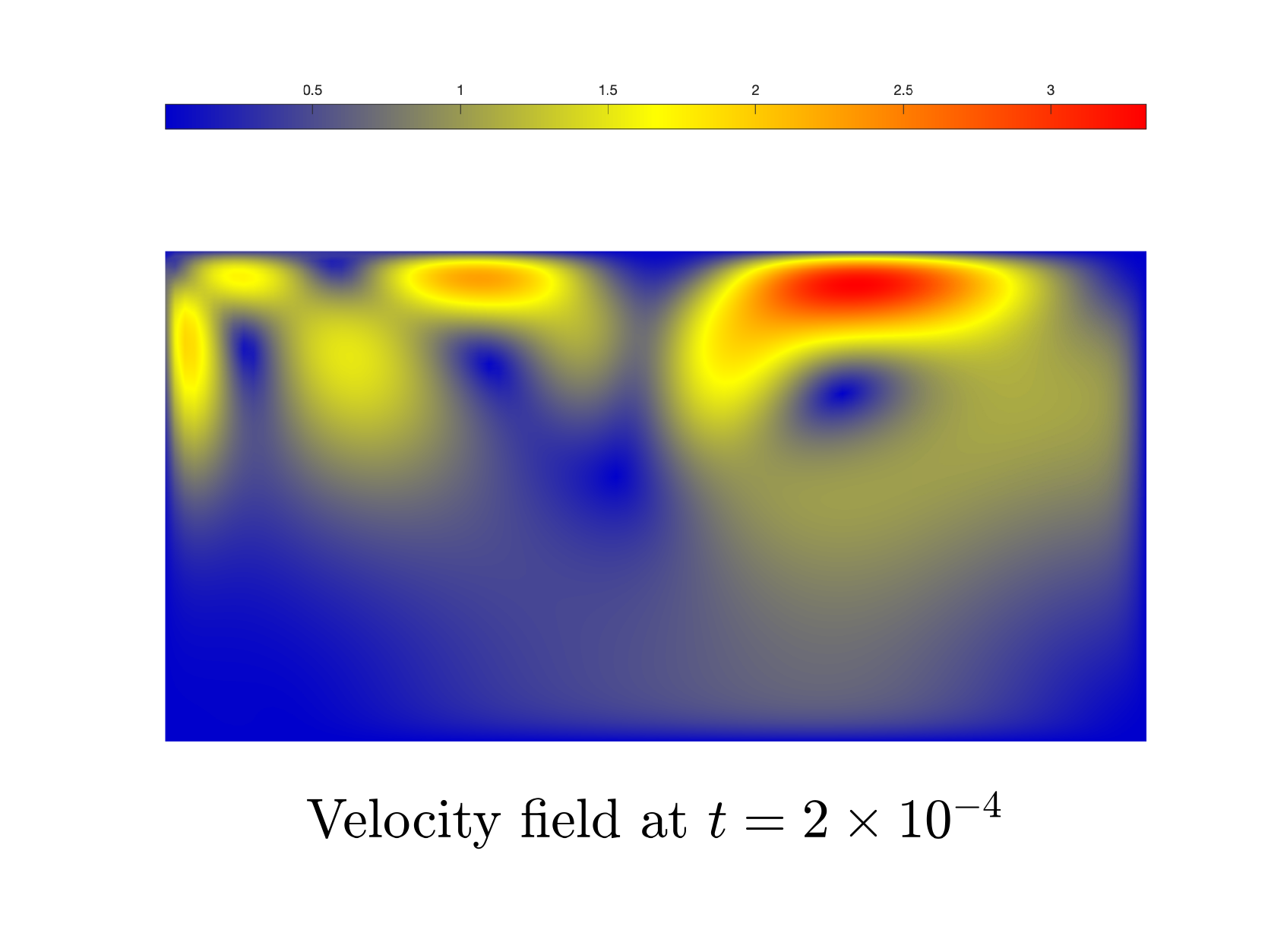} 
	\end{minipage}  
	\qquad
	\begin{minipage}{0.45\textwidth}  
		\centering  
		\includegraphics[width=\textwidth]{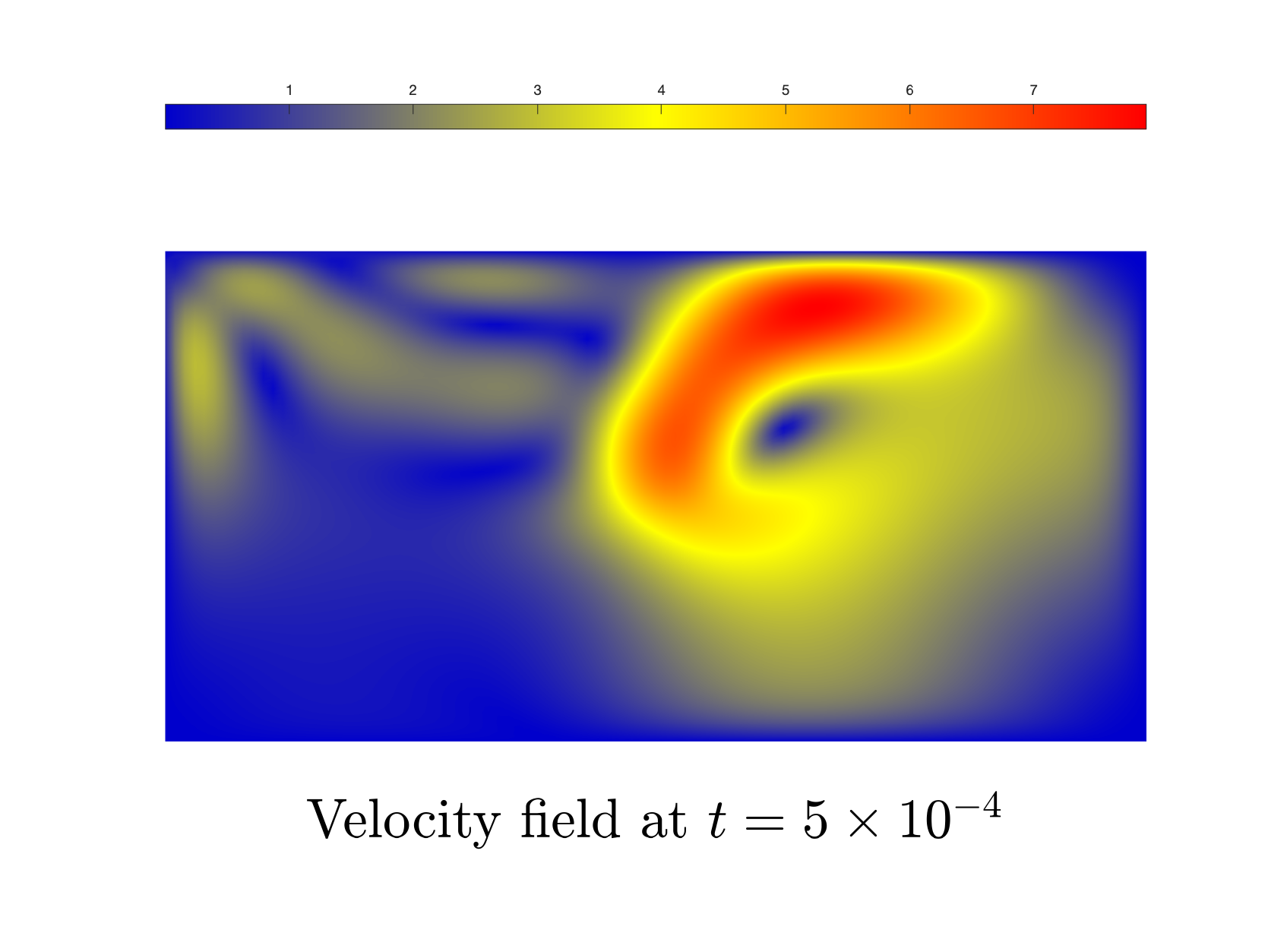}  
	\end{minipage}  	\begin{minipage}{0.45\textwidth} 
		\centering  
		\includegraphics[width=\textwidth]{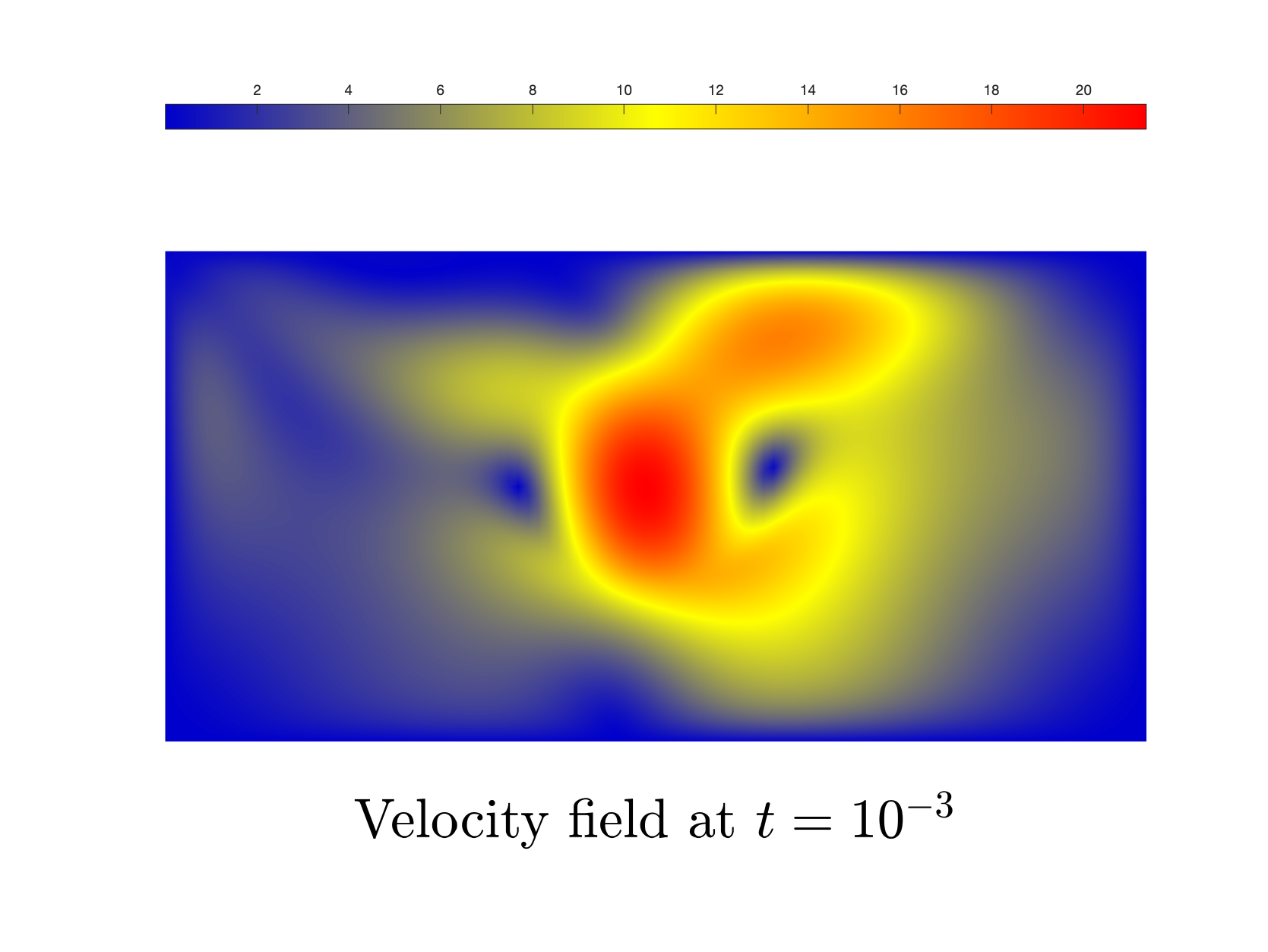}  
	\end{minipage}  
	\qquad
	\begin{minipage}{0.45\textwidth} 
		\centering  
		\includegraphics[width=\textwidth]{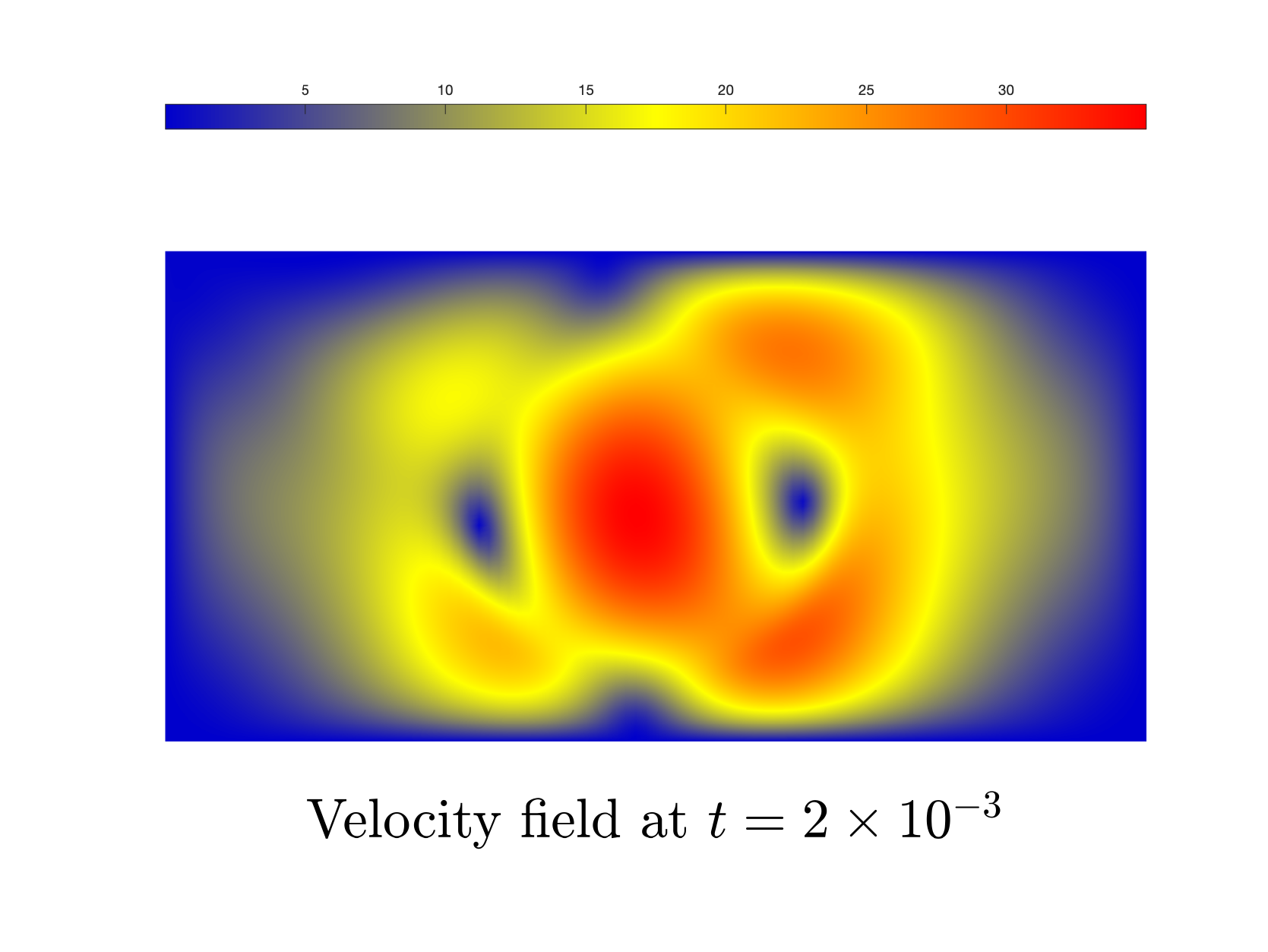} 
	\end{minipage}  
	\begin{minipage}{0.45\textwidth}  
		\centering  
		\includegraphics[width=\textwidth]{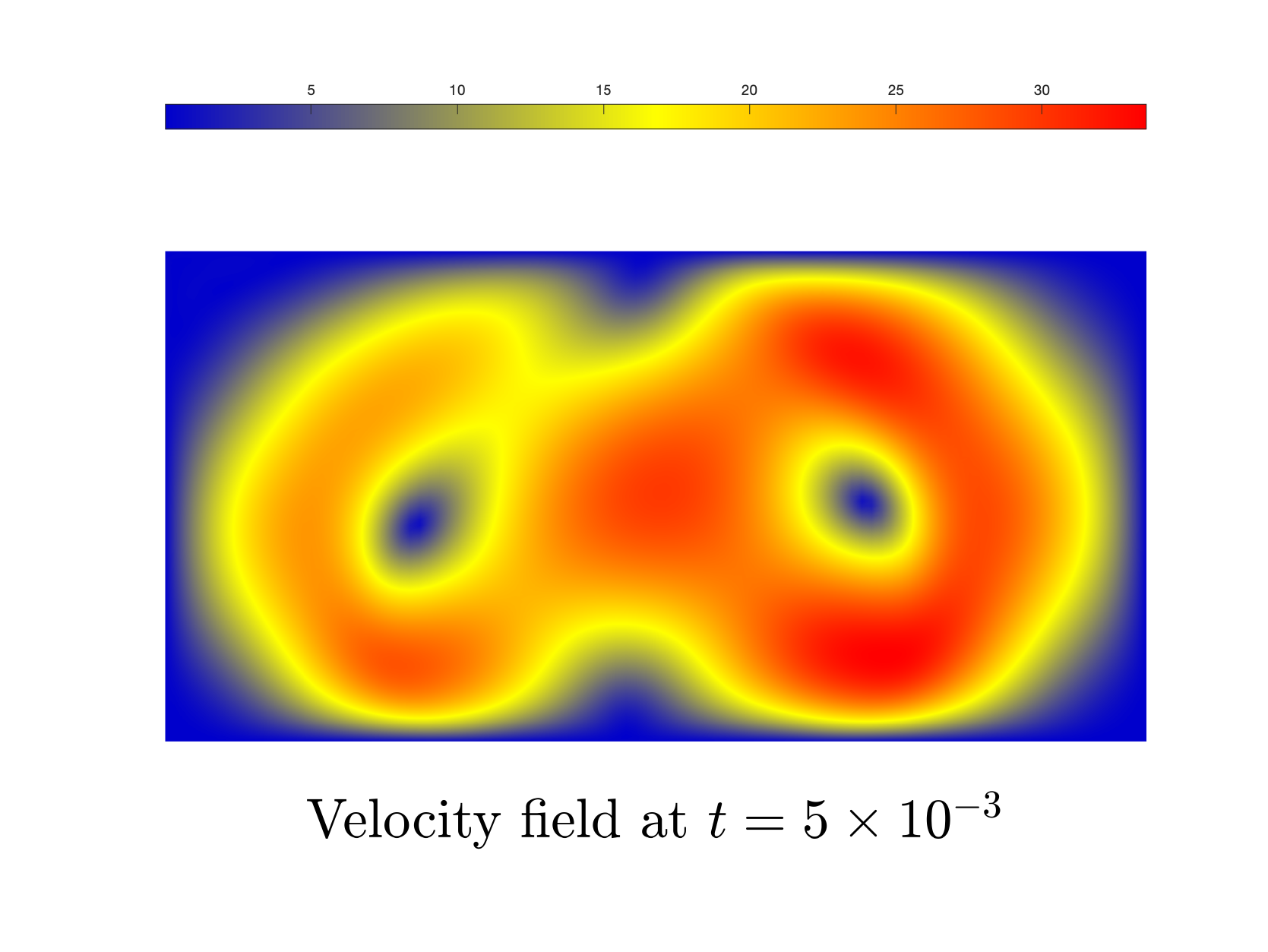}  
	\end{minipage}  
	\caption{Evolution of the velocity field of the fluid at $t= 10^{-5}, 2 \times 10^{-4}, 5 \times10^{-4},10^{-3},2 \times10^{-3},5 \times10^{-3}$.}  
\label{chemo-velocity}  
\end{figure}
\section{Conclusion}\label{conclusion}

In this paper, we have proposed a fully discrete finite element method based on a first-order  pressure-correction projection scheme for the time-dependent Chemotaxis–Navier–Stokes system. The Mini finite element was used to approximate the velocity-pressure pair, while standard linear Lagrangian elements were adopted for the cell density and chemical concentration. Nonlinear terms were treated semi-implicitly to balance stability and accuracy.
We have rigorously derived  error estimates for the discrete velocity and chemical concentration in both the \(L^2\)-norm under suitable regularity assumptions. 
To the best of our knowledge, this is the first work that provides a complete error analysis for a finite element discretization of the Chemotaxis–Navier–Stokes system using a  projection method. This study fills a gap in the existing literature and lays a foundation for further numerical investigations of chemotaxis-fluid interactions.
Future work will be directed towards extending this framework to more complex models, such as bioconvection phenomena arising in the Patlak–Keller–Segel–Navier–Stokes system and chemo-repulsion–fluid systems. These problems involve additional biological mechanisms and richer nonlinear structures, and their accurate and efficient numerical simulation remains an open and meaningful research direction.

\section*{CRediT authorship contribution statement}
Chenyang Li:
Writing -- original draft, Visualization, Validation, Software, Methodology, Conceptualization;
Ping Lin:
Methodology, Conceptualization;
Haibiao Zheng:
Methodology, Conceptualization;

\section*{Data availability}
Data will be made available on request.

\section*{Declaration of competing interest}
The authors declare that they have no known competing financial interests or personal relationships
that could have appeared to influence the work reported in this paper.
\section*{Acknowledgments}
The authors would like to thank the editor and referees for their valuable comments and suggestions
which helped us to improve the results of this paper. This work was supported by National Natural Science Foundation of China (No. 12471406) and the Science
and Technology Commission of Shanghai Municipality (Grant Nos. 22JC1400900, 22DZ2229014).
\section*{Date availability statement}
The datasets generated and  analyzed during the current study are available from the corresponding author upon reasonable request. 

\end{document}